\documentclass[reqno, 11pt, a4paper]{amsart}
\usepackage[utf8]{inputenc}
\usepackage{amsfonts}
\usepackage{amsmath}
\usepackage{amssymb}
\usepackage{amsthm}
\usepackage{mathrsfs} 
\usepackage[dvipsnames]{xcolor}

\usepackage[colorlinks=true, linkcolor=blue, citecolor=blue]{hyperref}

\usepackage{appendix}
\usepackage{enumerate}
\usepackage[text={33pc,605pt},centering]{geometry}    

\usepackage[
    backend=biber,
    labeldate=year,
    date=year,
    natbib=true,
    url=false, 
    doi=false,
    eprint=false,
    style=numeric-comp,
    sorting=nyt,
    abbreviate=false,
    isbn=false,
    maxcitenames=9,
    maxbibnames=9,
    doi=true,
    giveninits = true]{biblatex}
\AtEveryBibitem{
  \clearlist{language}
  \clearfield{labelmonth}
}
\newtheorem{theorem}{Theorem}[section]
\newtheorem{lemma}[theorem]{Lemma}
\newtheorem{prop}[theorem]{Proposition}
\newtheorem{assumption}[theorem]{Assumption}

\theoremstyle{definition}
\newtheorem{definition}[theorem]{Definition}
\newtheorem{example}[theorem]{Example}

\theoremstyle{remark}
\newtheorem{remark}[theorem]{Remark}
\newtheorem{question}[theorem]{Question}

\numberwithin{equation}{section} 

\newcommand{\norm}[1]{\left \lVert  #1 \right \rVert}
\newcommand{\abs}[1]{\left\lvert #1 \right\rvert}

\newcommand{\vertiii}[1]{{\left\vert\kern-0.25ex\left\vert\kern-0.25ex\left\vert #1 
    \right\vert\kern-0.25ex\right\vert\kern-0.25ex\right\vert}}

\newcommand{\Z}{\mathbb{Z}}
\newcommand{\C}{\mathbb{C}}
\newcommand{\R}{\mathbb{R}}
\newcommand{\N}{\mathbb{N}}

\newcommand{\calF}{\mathcal{F}}

\newcommand{\dd}{\mathop{}\!\mathrm{d}}

\newcommand{\e}{\mathrm{e}}
\newcommand{\jap}[1]{\langle #1 \rangle}

\newcommand{\Cluster}{\mathcal{C}}

\title[Stationary measures of the critical Ornstein--Uhlenbeck process]{On the stationary measures of the critical Ornstein--Uhlenbeck process}
\author[J.-D. Deuschel]{Jean-Dominique Deuschel}
\address{Technische Universität Berlin}
\email{deuschel@math.tu-berlin.de}
\author[J. K\"oppl]{Jonas K\"oppl}
\address{Technische Universität Braunschweig}
\email{jonas.koeppl@tu-braunschweig.de}
\author[Y. Steenbeck]{Yannic Steenbeck}
\address{Technische Universität Braunschweig}
\email{yannic.steenbeck@tu-braunschweig.de}
\author[T. Worschech]{Thomas Worschech}
\address{Technische Universität Berlin}
\email{taworschech@gmail.com}

\date{\today}
\keywords{Infinite-dimensional Ornstein–Uhlenbeck process, stationary and reversible measures, Gibbs measures, Gaussian free field, random conductance model, heat semigroup, time-periodic measures, random environments.}
\subjclass{Primary 60K35; Secondary 82C20, 60K37} 
\begin{document}
\begin{abstract} 
 We study linear and symmetric diffusion processes on $\R^{\Z^d}$ that can be seen as the Langevin dynamics of the (inhomogeneous) harmonic crystal for given conductances, with a particular focus on their stationary distributions. Despite the linearity of the interaction, we exhibit a rich variety of behaviours, depending on the disorder encoded by the conductances. Our main results provide essentially sharp criteria on the conductances ensuring that every stationary measure is reversible. We also provide examples for which these criteria fail and where either no stationary measures exist or reversible and non-reversible stationary measures coexist.  
 Additionally, we show that the linear system can even exhibit  non-trivial time-periodic behaviour and  provide a spectral characterisation of the occurrence of such oscillations. 
 The case of deterministic conductances is complemented by a study of the case of random conductances under quite general moment assumptions. 
\end{abstract}
\maketitle
\setcounter{tocdepth}{1}
\tableofcontents

\section{Introduction}
We study linear and symmetric diffusion processes on $\R^{\Z^d}$ given in terms of a coupled system of real-valued SDEs of the form 
\begin{equation}\label{sde-intro}
    \dd\phi_t(x) = \sum_{y\sim x}\omega(x,y)(\phi_t(y)-\phi_t(x))\dd t + \sqrt{2}\dd B_t(x), \quad x \in \Z^d,  
\end{equation}
where $\omega = (\omega(x,y))_{x,y\in\Z^d}$ is a set of symmetric nearest-neighbour conductances and $(B_t(x))_{x\in \Z^d, t\geq0}$ are independent standard Brownian motions. Our assumptions on $\omega$ will ensure that the associated variable speed random walk does not explode in finite time.
These processes $\{\phi_t(x) \colon x\in \Z^d, t\geq 0\}$ are also known as \textit{critical Ornstein--Uhlenbeck processes} and have for example been studied in the context of aging phenomena, both in the physics \cite{parisi} and mathematics literature \cite{dembo_deuschel}. 

Of particular concern to us will be the analysis of the stationary distributions for the SDE \eqref{sde-intro}. Aside from the connection which these diffusions have with the (inhomogeneous) discrete Gaussian Free Field in statistical mechanics, we believe that the study of such diffusions and their possible long-time behaviour is interesting on purely mathematical grounds. Indeed, as far as we know, the ergodic theory of infinite-dimensional diffusions is still poorly understood, especially in the regime of phase coexistence, i.e.\, when there may be more than one stationary distribution. 
Despite the linearity of \eqref{sde-intro}, the stationary measures can exhibit a surprisingly rich variety of different behaviours, depending on the growth of the conductances, or rather their local sum $\pi(x) := \sum_{y\sim x}\omega(x,y)$, as a function of $\norm{x}$.

Before we become a bit more rigorous and set up the precise mathematical terrain on which we will proceed, let us briefly highlight some of the implications of our main results. In particular, our results show an essentially sharp transition at quadratic growth of the conductances. 
\begin{enumerate}[i.]
    \item If the conductances satisfy a subquadratic growth bound and the random walk is transient, then \eqref{sde-intro} admits stationary measures and every stationary measure  is also a reversible measure and in particular a Gibbs measure. 
    \item There are non-degenerate conductances $\omega$ that grow quadratically such that \eqref{sde-intro} admits both reversible and non-reversible stationary measures. 
    \item There are non-degenerate conductances $\omega$ that grow quadratically such that \eqref{sde-intro} exhibits non-trivial time-periodic behaviour, i.e.\, there is an initial distribution $\nu_0$ and a time-period $\tau>0$ such that if we initialise \eqref{sde-intro} with $\phi_0 \sim \nu_0$, then $\phi_t \not\sim \nu_0$ for all $t\in(0,\tau)$ and $\phi_\tau \sim \nu_0$. 
    \item For ergodic translation-invariant random conductances we establish a moment condition that ensures that almost surely every stationary measure is also a reversible measure and in particular a Gibbs measure. 
\end{enumerate}
Item $i.$ is analogous to previous results for interacting particle systems with compact local spin spaces in $d=1,2$ proved in \cite{holley_one_1977,holley_diffusions_1981} but our result also works in any dimension $d\geq 3$. This Gibbsian characterisation of the stationary measures is what one would typically expect, even though it is highly non-trivial to prove without additionally assuming shift-invariance. 

By contrast, Items $ii.$ and $iii.$ demonstrate that this picture breaks down at quadratic growth. To our knowledge, Item $iii.$ provides the first example of a reversible interacting particle system exhibiting time-periodic behaviour. Moreover, we give a spectral characterisation of the occurrence of such periodic behaviour.

Taken together, these results identify an essentially sharp transition in the long-time behaviour of the dynamics. Below quadratic growth all stationary measures are reversible, whereas at quadratic growth one may encounter non-reversible stationary states and non-trivial time-periodic behaviour. 

\subsection*{Organisation of the manuscript}
In Section \ref{section:setting-results} we make the mathematical setup precise and state our main results before discussing their relation to the existing literature in Section \ref{section:literature}. We then proceed to explain the main strategy of the proofs in Section \ref{section:proof-strategy} before providing all technicalities in the remaining sections. 

\section{Setting and main results}\label{section:setting-results}
Consider a set of symmetric nearest-neighbour conductances $(\omega(x,y))_{x,y\in \Z^d}$ on $\Z^d$ and the corresponding variable speed random walk (VSRW) $(X_t)_{t\geq 0}$ with generator $-\Delta_\omega$ defined by
\begin{equation}
    (\Delta_\omega u)(x) := \sum_{y \sim x}\omega(x,y)[u(x)-u(y)]. 
\end{equation}
We will allow for vanishing conductances but always assume that the natural bond percolation model associated to the set of conductances admits a unique unbounded component and will denote this component by $\mathcal{C}(\omega)$. Of course, if the conductances are strictly positive then $\mathcal{C}(\omega) = \Z^d$. 
In general, the variable speed random walk may have a finite (random) explosion time $\zeta$, so that we set 
\begin{equation}
    p^\omega_t(x,y) := \mathbf{P}_x^\omega[X_t = y, \zeta > t], 
\end{equation}
where $\mathbf{P}_x^\omega$ denotes the law of the variable speed random walk on the conductances $\omega$ started in $x\in \mathcal{C}(\omega)$. The expectation with respect to $\mathbf{P}_x^\omega$ will be denoted by $\mathbf{E}_x^\omega$. Moreover, we will denote the Green kernel of the random walk generated by $-\Delta_\omega$ by $G^\omega(\cdot, \cdot)$, i.e.\, 
\begin{align*}
    G^\omega(x,y) = \mathbf{E}^\omega_x\left[\int_0^\infty \mathbf{1}_{\{X_s = y\}}\dd s\right], \quad x,y \in \mathcal{C}(\omega). 
\end{align*}
This is the expected amount of time that the random walk spends at site $y$ when started in $x$. 
We will be interested in the stationary measures of the infinite-dimensional system of SDEs given by 
\begin{equation}\label{sde}
    d\phi_t(x) = \sum_{y\sim x}\omega(x,y)(\phi_t(y)-\phi_t(x))\dd t + \sqrt{2}\dd B_t(x), \quad x \in \mathcal{C}(\omega),  
\end{equation}
where $(B_\cdot(x))_{x\in \mathcal{C}(\omega)}$ are independent standard Brownian motions. 
Denote by $\mathbb{F} := (\mathcal{F}_t^B)_{t \geq 0}$ the filtration generated by this collection of Brownian motions. 
For a fixed initial condition $\phi_0: \mathcal{C}(\omega) \to \R$, the variation-of-constants formula yields the formal solution 
\begin{equation}\label{sde-formal-solution}
    \phi_t(x) = (P^\omega_t \phi_0)(x) + \sqrt{2}\sum_{y \in \mathcal{C}(\omega)}\int_0^t p^\omega_{t-s}(x,y)\dd B_s(y), \quad x \in \mathcal{C}(\omega), t\geq0, 
\end{equation}
where $(P^\omega_t)_{t\geq 0}$ is ``the" semigroup of the variable speed random walk, for now with unspecified domain. To fix notation, for an initial condition $\phi$, we will denote expectation with respect to the process defined by \eqref{sde} by $\mathbb{E}_\phi$. 
To rigorously make sense of both of the terms in \eqref{sde-formal-solution}, we will work with the following function spaces on $\mathcal{C}(\omega)$. 
The \emph{Schwartz space} \(\mathcal{S}(\mathcal{C}(\omega))\) is defined by
    \begin{align*}
        \mathcal{S} := \mathcal{S}(\mathcal{C}(\omega))
        := \bigcap_{N \geq 0} H_N,
    \end{align*} 
    where we use the weighted $\ell^2$-spaces
    \begin{equation*}
        H_N 
        = H_N(\Cluster(\omega))
        := \ell^2(\mathcal{C}(\omega), \langle x \rangle^{2N}) := \{ f: \mathcal{C}(\omega) \to \R\colon \Vert f \Vert_{H_N}^2 < \infty \},
    \end{equation*}
    with norms 
    \begin{align*}
        \Vert f \Vert_{H_N}^2
        := \Vert f \Vert_{\ell^{2}(\mathcal{C}(\omega), \langle x \rangle^{2N})}^2
        = \sum_{x \in \Cluster(\omega)} \langle x \rangle^{2N} \, \vert f(x) \vert^2. 
    \end{align*}
    Here and from now on we will use the notation $\langle x \rangle := (1+\abs{x}^2)^{1/2}$. 
    The topology on \(\mathcal{S}\) is such that a sequence \((f_n)_n\) of Schwartz functions converges to a Schwartz function \(f\) in \(\mathcal{S}\), i.e.\, \(f_n \xrightarrow[n \to \infty]{\mathcal{S}} f\),  if and only if \(\Vert f - f_n \Vert_{H_{N}} \xrightarrow[n \to \infty]{} 0\) for \textit{all} \(N \geq 0\).
    We denote its dual, the space of \emph{tempered distributions}, by \(\mathcal{S}'(\mathcal{C}(\omega))\) or just $\mathcal{S}'$.
Classically, these two spaces can be characterised as follows. 
\begin{prop}[Characterisation of \(\mathcal{S}, \mathcal{S}'\)]
    We have:
    \begin{enumerate}[i.]
        \item The Schwartz space consists of functions which decay faster than any polynomial grows
        \begin{align*}
            \mathcal{S}
            = \{f \colon \mathcal{C}(\omega) \to \mathbb{R} \,\colon\, \sup_{x} \, \langle x\rangle^{2N} \vert f(x) \vert < \infty \text{ for all } N \geq 0  \}.
        \end{align*}

        \item The tempered distributions are functions which grow at most polynomially
        \begin{align*}
            \mathcal{S}'
            = \{f \colon \mathcal{C}(\omega) \to \mathbb{R} \,\colon\, \text{There exists } N \geq 0 \text{ such that } \sup_{x} \, \langle x\rangle^{-2N} \vert f(x) \vert < \infty  \}
        \end{align*} and the dual pairing \(\langle \cdot, \cdot \rangle_{\mathcal{S}', \mathcal{S}}\) is given by
        \begin{align*}
            \langle \xi, u \rangle_{\mathcal{S}', \mathcal{S}}
            = \sum_{x} \xi(x) u(x),
        \end{align*} where the sum converges absolutely.
    \end{enumerate}
\end{prop}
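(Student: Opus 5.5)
For part i.\ I will compare the weighted $\ell^2$ norms with weighted sup norms. The basic tool is the summability of $\sum_{x\in\Z^d}\langle x\rangle^{-2M}<\infty$ for $2M>d$.

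If $f\in H_N$, then $\langle x\rangle^{2N}|f(x)|^2\le \|f\|_{H_N}^2$ for every $x$. Hence $\sup_x\langle x\rangle^{N}|f(x)|<\infty$. Since $f\in H_N$ for all $N$, we may replace $N$ by $2N$, which gives $\sup_x\langle x\rangle^{2N}|f(x)|<\infty$ for all $N$. Conversely, suppose $\sup_x\langle x\rangle^{2N'}|f(x)|=:C<\infty$ for all $N'$. Then
\[
\|f\|_{H_N}^2\le C^2\sum_x\langle x\rangle^{2N-4N'},
\]
and this is finite once $4N'-2N>d$. Here we use that $\mathcal{C}(\omega)\subset\Z^d$. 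This proves the set equality in i. The same estimates also show that the family of norms $\|\cdot\|_{H_N}$ and the family of sup norms $p_N(f):=\sup_x\langle x\rangle^{N}|f(x)|$ define the same Fréchet topology on $\mathcal{S}$.

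For part ii.\ there are two inclusions.

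First let $\xi$ grow polynomially, $|\xi(x)|\le C\langle x\rangle^{2N}$. For $u\in\mathcal{S}$, Cauchy--Schwarz gives
\[
\sum_x|\xi(x)u(x)|\le C\sum_x\langle x\rangle^{2N}|u(x)|\le C\Big(\sum_x\langle x\rangle^{-2M}\Big)^{1/2}\|u\|_{H_{2N+M}}
\]
with $2M>d$. So the sum converges absolutely and defines a continuous linear functional on $\mathcal{S}$.

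Conversely, let $\Lambda\in\mathcal{S}'$. Continuity on the Fréchet space $\mathcal{S}$ means that there exist $C$ and $N$ with $|\Lambda(u)|\le C\|u\|_{H_N}$ for all $u\in\mathcal{S}$. Only finitely many seminorms are needed because the $H_N$ norms increase in $N$. Define $\xi(x):=\Lambda(\delta_x)$. Then
\[
|\xi(x)|\le C\|\delta_x\|_{H_N}=C\langle x\rangle^{N},
\]
so $\xi$ grows polynomially and the exponent can be rounded up to the form $2N$.

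It remains to show $\Lambda(u)=\sum_x\xi(x)u(x)$. I will show that finitely supported functions are dense in $\mathcal{S}$. For $u\in\mathcal{S}$, let $u_R:=u\mathbf 1_{\{|x|\le R\}}$. Then
\[
\|u-u_R\|_{H_N}^2=\sum_{|x|>R}\langle x\rangle^{2N}|u(x)|^2\to0
\]
for every $N$, since this is the tail of a convergent series. By linearity, $\Lambda(u_R)=\sum_{|x|\le R}\xi(x)u(x)$. Letting $R\to\infty$ and using continuity of $\Lambda$ together with the absolute convergence already established gives the formula.

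Uniqueness of $\xi$ holds because $\xi(x)=\Lambda(\delta_x)$ is forced. Hence the identification $\Lambda\leftrightarrow\xi$ is a bijection.

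The only slightly delicate point is the step that continuity of $\Lambda$ on the projective limit reduces to a bound by a single $H_N$ norm. This is the standard characterisation of continuous functionals on a Fréchet space whose topology is given by an increasing sequence of seminorms. Everything else is routine.
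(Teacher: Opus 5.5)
Your proof is correct and complete. It compares $\|\cdot\|_{H_N}$ with weighted sup-norms via $\sum_{x}\langle x\rangle^{-2M}<\infty$ for $2M>d$, bounds a continuous functional by a single seminorm, and uses density of finitely supported functions in $\mathcal{S}$; the paper gives no proof of its own, calling the characterisation classical, and your argument is the standard one it refers to.
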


On $\mathcal{S}'$, we will also use the dual (semi)norms 
\begin{align*}
    \Vert \xi \Vert_{H_{-N}}^2
    := \sum_{x \in \Cluster(\omega)} \langle x \rangle^{-2N} \vert \xi(x)\vert^2.
\end{align*}
Observe that for every \(\xi \in \mathcal{S}'\) there is some \(N \in \mathbb{N}\) such that \(\Vert \xi \Vert_{H_{-N}} < \infty\). It is also not hard to see that for \(\xi \in \mathcal{S}'\) and all \(N \in \mathbb{N}_0\) it holds that
\begin{align}\label{equation:H_minus_N_norm_by_duality}
    \Vert \xi \Vert_{H_{-N}}
    = \sup_{0 \neq u \in \mathcal{S}} \frac{ \langle \xi, u \rangle_{\mathcal{S}', \mathcal{S}} }{\Vert u \Vert_{H_N}}.
\end{align}

\subsection{Well-posedness}
In order to make the formal solution \eqref{sde-formal-solution} well-defined for  initial conditions $\phi_0 \in \mathcal{S}'$, we will make the following assumptions on the conductances. 
\begin{assumption}[Quadratic growth]\label{assumption:at_most_quadratically_growing}
    The conductances \((\omega(x, y))_{x, y \in \Z^d}\) have at most quadratic growth, i.e.\,
    \begin{align}\label{eqn:assumption-quadratic-growth}
        \sup_{x \in \Z^d} \, \langle x \rangle^{-2} \Big(\sum_{y \sim x} \omega(x, y)\Big)
        < \infty. 
    \end{align}
\end{assumption}
For variable speed random walks on $\Z$ with symmetric and radial conductances $\omega$, there is indeed an equivalence between non-explosiveness and the condition
\begin{align}
    \sum_{n = 0}^{\infty} \frac{n}{\omega(n, n+1)} = \infty.
\end{align} 
This is a consequence of a general criterion for stochastic completeness, i.e.\, the absence of finite-time explosions, for weakly spherically-symmetric graphs, cf. \cite[Theorem 9.25]{keller2021graphs}. Hence, quadratically growing conductances seem to be the right power-law scale in one dimension. More generally, let us point out that there is a sufficient (but generally \textit{not} necessary) condition \cite[Theorem 14.11]{keller2021graphs} that the variable speed random walk is non-explosive if
    \begin{align}
        \int_0^\infty \, \frac{r}{(\log \vert B_{\rho^\omega}(x, r) \vert) \lor 1 } \, \dd r 
        = \infty
    \end{align} 
for the balls of the \textit{intrinsic metric} $\rho^\omega$, to be  defined in \eqref{equation:intrinsic_metric}, with respect to some fixed vertex \(x\). 
This integral is indeed infinite in the case of at most quadratically growing conductances. But since this criterion is only sufficient but not necessary it cannot be used to show we have finite-time explosions  for conductances that grow like \(\jap{x}^{2+\epsilon}\) for some $\epsilon>0$ in $d\geq2$.

Under the above growth assumption on the conductances we can make sure that the SDE \eqref{sde} is actually well posed and we are not making vacuous statements. 
\begin{theorem}[Well-posedness]\label{theorem:well-posed} 
   Suppose that the conductances $\omega$ satisfy Assumption \ref{assumption:at_most_quadratically_growing}. 
   Then the variable speed random walk is non-explosive and the SDE \eqref{sde} is well-posed for initial values $\phi_0 \in \mathcal{S}'$ in the following sense. For every initial condition $\phi_0\in \mathcal{S}'$ and every family $(B_t(x))_{x\in \mathcal{C}(\omega),\,t\geq0}$ of independent standard Brownian motions, there exists a unique adapted process
    $\phi\in C([0,\infty);\mathcal{S}')$ such that there is an $N=N(\phi_0,\omega) \in \N$ for which $\phi \in C([0,\infty); H_{-N})$ almost surely and 
    \begin{equation}
        \phi_t = \phi_0 - \int_0^t \Delta_\omega \phi_s \dd s + \sqrt{2}B_t, \quad 0\leq t < \infty,
    \end{equation}
    holds as an identity in $H_{-(N+2)}$. Moreover, for every $x\in \mathcal{C}(\omega)$ and $t\geq 0$ the solution is pointwise given by \eqref{sde-formal-solution}, where 
   \begin{equation}
       p^\omega_t(x,y) = \mathbf{P}_x^\omega[X_t = y], \quad x,y \in \mathcal{C}(\omega), t\geq 0
   \end{equation}
   and $(P_t^\omega)_{t\geq 0}$ is the semigroup of the variable speed random walk acting on $\mathcal{S}'$. 
\end{theorem}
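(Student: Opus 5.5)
The plan rests on one Lyapunov-type estimate: the weight $\jap{x}^{2N}$ is almost superharmonic for $-\Delta_\omega$ under Assumption \ref{assumption:at_most_quadratically_growing}. Everything else (non-explosion, the action of $P^\omega_t$ on $\mathcal{S}'$, control of the stochastic convolution, uniqueness) follows from it.

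\textbf{Step 1: the key estimate.} Set $C_\omega := \sup_x \jap{x}^{-2}\pi(x)$. For nearest neighbours $y\sim x$ we have $\jap{y}/\jap{x}\in[c,c^{-1}]$ for a universal $c>0$. Hence, for every $M\in\R$,
\[
\abs{\Delta_\omega \jap{\cdot}^{M}(x)}=\Big|\sum_{y\sim x}\omega(x,y)\big(\jap{x}^{M}-\jap{y}^{M}\big)\Big|\le \pi(x)\, C_M\,\jap{x}^{M-1}\le C_M C_\omega \jap{x}^{M+1}.
\]
This alone is too weak. Using a second-order Taylor expansion instead, the first-order term is $\sum_y \omega(x,y)\nabla\jap{x}^M\cdot(y-x)$. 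That term does not cancel for inhomogeneous $\omega$, so it is only bounded by $\pi(x)\jap{x}^{M-1}\lesssim \jap{x}^{M+1}$.

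So $-\Delta_\omega$ maps $H_{-N}$ into $H_{-(N+1)}$; it is continuous from $H_{-N}$ to $H_{-(N+2)}$, matching the statement. For non-explosion I need something sharper. I will use the function $V(x)=\log\jap{x}$, or a power $\jap{x}^{\varepsilon}$ with small $\varepsilon$. With $\log$,
\[
\abs{\Delta_\omega V(x)}\le \sum_{y\sim x}\omega(x,y)\,\abs{\log\jap{x}-\log\jap{y}}\lesssim \pi(x)\jap{x}^{-1}\lesssim \jap{x}.
\]
This is still not bounded. The clean route is $V(x)=\jap{x}^{M}$ together with the estimate $\abs{\Delta_\omega V}\le C\jap{x}^{M+1}$. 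This is not a Lyapunov inequality, so I would instead use a Gronwall argument in the intrinsic metric. Assumption \ref{assumption:at_most_quadratically_growing} forces $\rho^\omega(0,x)\gtrsim \log\jap{x}$, since each edge out of $x$ has intrinsic length at least roughly $\pi(x)^{-1/2}\gtrsim \jap{x}^{-1}$. Intrinsic balls therefore have volume at most $e^{Cr\,d}$, and the integral criterion \cite[Theorem 14.11]{keller2021graphs} diverges, as the paper already remarks. This yields non-explosion, so $p^\omega_t(x,y)=\mathbf{P}^\omega_x[X_t=y]$.

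\textbf{Step 2: the semigroup on $\mathcal{S}'$.} I need $P^\omega_t\jap{\cdot}^{2N}(x)\le e^{K_N t}\jap{x}^{2N'}$ for some $N'$. To get it, I will bound moments of $\jap{X_t}$ by comparison: the jump rate at $x$ is $\pi(x)\lesssim\jap{x}^2$, and each jump changes $\log\jap{X}$ by $O(\jap{X}^{-1})$. Then $Y_t=\log\jap{X_t}$ has compensator drift bounded by a constant times $\jap{X}$, which is again too large. So instead I will work directly with $\jap{X_t}^{M}$ and the martingale problem, writing $\frac{\dd}{\dd t}\mathbf{E}\jap{X_t}^M\le C\,\mathbf{E}\jap{X_t}^{M+1}$.

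This hierarchy does not close. This is the main obstacle, and I expect the authors to handle it through the intrinsic metric: Davies–Gaffney or Carne–Varopoulos-type bounds $p_t(x,y)\le \exp(-\rho^\omega(x,y)^2/Ct)$ at large distances. Combined with $\rho^\omega(x,y)\ge c\,\abs{\log\jap{x}-\log\jap{y}}$, these give $\mathbf{P}_x[\jap{X_t}>R\jap{x}]\le \exp(-c(\log R)^2/t)$. That decays faster than any power of $R$, so every polynomial moment of $\jap{X_t}/\jap{x}$ is finite and locally bounded in $t$. It follows that $P^\omega_t:H_{-N}\to H_{-N'}$ is bounded, locally uniformly in $t$, and strongly continuous.

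\textbf{Step 3: existence and uniqueness.} Existence: the stochastic convolution $\sqrt2\sum_y\int_0^t p_{t-s}(x,y)\dd B_s(y)$ has variance $2\int_0^t p_{2s}(x,x)\dd s\le 2t$. Hence it lies in $H_{-N}$ for $2N>d$, and Kolmogorov continuity (via Step 2) gives a version in $C([0,\infty);H_{-N})$. Since $p$ solves the backward equation, \eqref{sde-formal-solution} satisfies the integral identity in $H_{-(N+2)}$. Uniqueness: the difference $\psi$ of two solutions solves $\psi_t=-\int_0^t\Delta_\omega\psi_s\dd s$. Testing against $P^\omega_{t-s}u$ for $u$ finitely supported, and differentiating $s\mapsto\langle\psi_s,P^\omega_{t-s}u\rangle$, gives $\langle\psi_t,u\rangle=0$. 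This requires $P^\omega_{t}u\in\mathcal{S}$ with bounds uniform on compacts, which is the dual statement of Step 2.
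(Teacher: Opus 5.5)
Your proposal is workable in outline, but at the decisive step (Step 2) it takes a different and heavier route than the paper, and the estimate you invoke there is misstated.

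\textbf{How the paper handles Step 2.} The paper never uses off-diagonal heat kernel bounds. It proves the loss-free estimate $\Vert P_t u\Vert_{H_N}\le \e^{K_N t/2}\Vert u\Vert_{H_N}$, with $K_N=\sup_x\jap{x}^{-2N}\sum_{y\sim x}\omega(x,y)\abs{\jap{x}^N-\jap{y}^N}^2$. To do so it differentiates $\Vert P^\Lambda_t u\Vert_{H_N}^2$ for the semigroup killed outside a finite $\Lambda$, which gives $-2\mathcal{E}(P^\Lambda_t u,\jap{\cdot}^{2N}P^\Lambda_t u)$. It then bounds this by the Caccioppoli inequality $\mathcal{E}(f,g^2f)\ge-\frac12\sum_x f(x)^2\sum_{y}\omega(x,y)\abs{g(x)-g(y)}^2$.

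Working in weighted $\ell^2$ and using the symmetry of $\Delta_\omega$ removes exactly the first-order term that, as you observed, makes your hierarchy $\frac{\dd}{\dd t}\mathbf{E}\jap{X_t}^M\le C\mathbf{E}\jap{X_t}^{M+1}$ fail to close. Only the squared increment of the weight survives, and $\omega(x,y)\abs{\jap{x}^N-\jap{y}^N}^2\lesssim\pi(x)\jap{x}^{2N-2}\lesssim\jap{x}^{2N}$ is precisely what quadratic growth controls, so Gronwall closes.

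Duality then gives the same bound on $H_{-N}$ with no loss of weight, and a $C_0$-semigroup on every $H_{\pm N}$, which is used later in the paper. It also yields a self-contained proof of non-explosion by differentiating $\sum_y p_t(x,y)$. Your Step 1 coincides with the paper's primary argument via \cite[Theorem 14.11]{keller2021graphs}.

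\textbf{Your substitute for Step 2.} You combine intrinsic-metric off-diagonal bounds with $\rho^\omega(x,y)\ge c\abs{\log\jap{x}-\log\jap{y}}$; the latter is correct, edge by edge. This can be made to work, but not as you state it. On graphs a Gaussian bound $p_t(x,y)\le\exp(-\rho^2/Ct)$ fails in general when $\rho\gg t$, and that is exactly your regime: $t$ fixed and $\rho\approx\log R\to\infty$. The correct discrete Davies--Gaffney--Grigor'yan bound (jump size at most $1$ here) only gives decay like $\exp(-c\rho\log(\rho/t))$. So your tail bound $\exp(-c(\log R)^2/t)$ is not justified by that tool.

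The correct bound still gives $R^{-c\log\log R}$, which is superpolynomial, so your conclusion survives. Summing the pointwise bound over dyadic shells then gives $P_t\colon H_{-N}\to H_{-N'}$ with a polynomial loss in the weight. That loss is harmless for the theorem, which only asks for some $N$.

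Separately, $\Delta_\omega$ does not map $H_{-N}$ into $H_{-(N+1)}$. Your bound for $\Delta_\omega\jap{\cdot}^M$ uses smoothness of the weight, whereas for a general $f$ the term $\pi(x)f(x)$ costs two powers. Only $H_{-N}\to H_{-(N+2)}$ holds.

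\textbf{Where your route is simpler.} Your treatment of the noise term is cleaner than the paper's. The bound $\mathbb{E}\abs{X_t(x)}^2=2\int_0^t p_{2s}(x,x)\dd s\le 2t$, together with the analogous spectral-calculus bound $\lesssim\abs{t-s}$ for the variance of increments (uniform in $x$), gives Kolmogorov continuity in $H_{-N}$ for $2N>d$. This needs neither the estimate on $\Delta p_t(\cdot,y)(x)$ that the paper relies on nor your Step 2. Your duality argument for uniqueness also makes explicit what the paper compresses into the remark that $Z_t=\varphi_0-\int_0^t\Delta Z_s\dd s$ already forces $Z_t=P_t\varphi_0$.
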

In the uniformly elliptic case, the well-posedness of \eqref{sde} follows from the classical results for infinite systems with Lipschitz coefficients in \cite[Theorem  4.1]{shiga_infinite_1980}. But their assumption C-2, in particular the inequality $(4.8)$, is too restrictive for our purposes as it can only work for uniformly bounded conductances. 

These difficulties stem from the fact that, if the conductances \(\omega(x, y)\) are unbounded, one naively loses growth regularity in every infinitesimal time step of the SDE \eqref{sde}. However, the same apparent difficulty already appears for the heat equation \(\dd \phi_t = (-\Delta_\omega \phi_t )\dd t\), so that we can overcome this obstacle by first clarifying the situation there for at most quadratically growing conductances \(\omega(x, y)\) and at most polynomially growing initial conditions \(\phi_0 \in \mathcal{S}'\), see Section~\ref{section:heat-semigroup}.

\subsection{Stationary measures}
Having established well-posedness of the dynamics on $\mathcal{S}'$, we now turn to its stationary measures. By Theorem \(\ref{theorem:well-posed}\), the solution of \eqref{sde} defines a Markov process on $\mathcal{S}'$. We denote its Markov semigroup by $(T_t)_{t\geq0}$, i.e.\,
\[
T_tF(\phi) :=
 \mathbb E_\phi[F(\phi_t)],
\quad
\phi\in\mathcal{S}',
\]
for bounded and measurable $F:\mathcal{S}'\to\mathbb{R}$, where $\mathbb{E}_\phi$ denotes expectation for the solution of \eqref{sde} started from the initial condition $\phi_0=\phi$.
We begin with a short excursion to statistical mechanics to define the canonical candidates for the stationary measures of $(T_t)_{t\geq 0}$. 

\subsubsection{Gaussian random fields as Gibbs measures}
We are interested in continuous spin systems with local spin space $\R$ whose Hamiltonian in a finite volume $\Lambda \Subset \mathcal{C}(\omega)$ is given by 
\begin{align}\label{eq:definition-hamiltonian}
    \mathscr{H}_{\Lambda}^\omega(\phi) := \frac{1}{2}\sum_{\substack{\{i,j\}: i\sim j\\ \{i,j\}\cap\Lambda\neq\varnothing}}\omega(i,j)(\phi_i - \phi_j)^2, \quad \phi \in \R^{\mathcal{C}(\omega)}.  
\end{align}
Here we use the shorthand notation
$\Lambda \Subset \mathcal{C}(\omega)$ to signify that $\Lambda$ is a \textit{finite} subset. 
Note that the sum in \eqref{eq:definition-hamiltonian} is always finite, since we are only summing over finitely many pairs of sites. As usual, one now defines the finite volume Gibbs measure with boundary condition $\eta \in \R^{\mathcal{C}(\omega)}$ by 
\begin{align*}
    \mu^\omega_{\Lambda;\eta}(B) = \frac{1}{\mathbf{Z}^\eta_{\Lambda;\omega}}\int e^{-\mathscr{H}_\Lambda^\omega(\phi_\Lambda \eta_{\Lambda^c})}\mathbf{1}_B(\phi_\Lambda \eta_{\Lambda^c}) \prod_{x \in \Lambda}\dd\phi_x, \quad B \in \mathscr{F},
\end{align*} 

where $\mathscr{F}$ is the product $\sigma$-algebra on $\R^{\mathcal{C}(\omega)}$ and the normalisation constant $\mathbf{Z}^\eta_{\Lambda;\omega}$ is of course given by 
\begin{align*}
    \mathbf{Z}^\eta_{\Lambda;\omega} 
    =
    \int e^{-\mathscr{H}_\Lambda^\omega(\phi_\Lambda \eta_{\Lambda^c})} \prod_{x \in \Lambda}\dd\phi_x.  
\end{align*}

These finite volume Gibbs measures define a specification $\gamma^\omega = (\gamma_\Lambda^\omega)_{\Lambda \Subset \mathcal{C}(\omega)}$ with kernels given by 
\begin{align*}
    \gamma_\Lambda^\omega(\cdot \lvert \eta) := \mu^{\omega}_{\Lambda;\eta}(\cdot). 
\end{align*}
The set of Gibbs measures compatible with $\gamma^\omega$ and are supported on $\mathcal{S}'$ will be denoted by $\mathscr{G}(\gamma^\omega)$ and contains exactly those measures $\mu$ on $\R^{\mathcal{C}(\omega)}$ that satisfy the DLR equations, i.e.\, $\mu \in \mathscr{G}(\gamma^\omega)$ if and only if 
\begin{align*}
    \forall \Lambda \Subset \mathcal{C}(\omega) \ \forall B \in \mathscr{F}: \quad \mu(B \lvert \mathscr{F}_{\Lambda^c})(\phi) = \gamma^\omega_\Lambda(B \lvert \phi) \quad \text{for $\mu$-almost all $\phi$},
\end{align*}
where $\mathscr{F}_{\Delta}=\sigma(\phi_x:x\in\Delta)$ is the sub-$\sigma$-algebra generated by the spins inside the (possibly infinite) subset $\Delta \subset \mathcal{C}(\omega)$ and $\Lambda^c := \mathcal{C}(\omega) \setminus \Lambda$. 
In order to speak about properties and characterisations of stationary measures for the SDE \eqref{sde} but also about the existence of infinite-volume Gibbs measures for the specification $\gamma^\omega$, we first need to make sure that these actually exist. This is the purpose of the following assumption. 
\begin{assumption}[Quantitative transience]\label{assumption:green_function_is_tempered}
    The conductances \((\omega(x, y))_{x, y}\) are such that the random walk is transient and the map \[\mathcal{C}(\omega) \ni x \mapsto G^\omega(x, x) = \int_0^{\infty} p^\omega_t(x, x) \dd t\] is an element of \(\mathcal{S}'\). 
\end{assumption}

Usually, the description of the elements of the set of infinite-volume Gibbs measures is quite subtle. In our case, we can at least give a very explicit description of the extremal elements of $\mathscr{G}(\gamma^\omega)$. For this, let us first introduce some more notation. We say that a function $\eta : \mathcal{C}(\omega) \to \R$ is $\Delta_\omega$-harmonic (or just harmonic), if $\Delta_\omega \eta \equiv 0$. 

\begin{prop}\label{proposition:gaussian-gibbs-measures}
    Under Assumptions \ref{assumption:at_most_quadratically_growing} and \ref{assumption:green_function_is_tempered} the set $\mathscr{G}(\gamma^\omega)$ is non-empty. If $\mu$ is an extremal element of $\mathscr{G}(\gamma^\omega)$, then there exists a $\Delta_\omega$-harmonic function $\eta: \mathcal{C}(\omega) \to \R$ such that $\mu$ is Gaussian with mean $\eta$ and covariance matrix $G^\omega = (G^\omega(x,y))_{x,y\in \mathcal{C}(\omega)}$. 
    In other words, the characteristic function of $\mu$ is given by 
    \begin{align*}
        C_\eta: \mathcal{S} \ni u \mapsto \exp\left(i\langle u,\eta \rangle - \int_0^\infty \norm{P^\omega_s u}^2_2 \dd s\right),
    \end{align*} 
    where $\eta$ is a $\Delta_\omega$-harmonic function. We will denote this Gaussian measure $\mu$ by $\mathcal{N}(\eta, G^\omega)$. 
    Moreover, every element $\nu \in \mathscr{G}(\gamma^\omega)$ is of the form 
    \begin{align*}
        \nu(\cdot) = \int_{\mathcal{S}'}\mathcal{N}(\eta, G^\omega)(\cdot)\varrho(\dd\eta),
    \end{align*}
    where $\varrho$ is a probability measure on $\mathcal{S}'$ with 
    \begin{align*}
        \varrho\left(\eta \emph{ is $\Delta_\omega$-harmonic}\right) = 1. 
    \end{align*}
\end{prop}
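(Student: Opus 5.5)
Existence comes first. Under Assumption \ref{assumption:green_function_is_tempered} I would define the centred Gaussian field $\mu_0 = \mathcal{N}(0,G^\omega)$ on $\R^{\mathcal{C}(\omega)}$. Its covariance $G^\omega$ is symmetric and positive semidefinite, since it is the Green kernel of a reversible walk with respect to counting measure: $\sum_{x,y} u(x)G^\omega(x,y)u(y) = \int_0^\infty \norm{P^\omega_{s/2}u}_2^2\dd s\ge 0$. Up to the time rescaling behind the normalisation in $C_\eta$, this is the identity $\int_0^\infty\norm{P_s^\omega u}_2^2\dd s = \tfrac12\langle u, G^\omega u\rangle$. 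To show $\mu_0$ is supported on $\mathcal{S}'$, I would use $\mathbb{E}\,\phi(x)^2 = G^\omega(x,x)\le C\jap{x}^{2M}$ and take $N>M+d/2$. Then $\mathbb{E}\norm{\phi}_{H_{-N}}^2 = \sum_x\jap{x}^{-2N}G^\omega(x,x)<\infty$, so $\phi\in H_{-N}\subset\mathcal{S}'$ almost surely.

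Next I would verify the DLR equations for $\mathcal{N}(\eta,G^\omega)$ with $\eta$ harmonic and of polynomial growth. Fix $\Lambda\Subset\mathcal{C}(\omega)$. The kernel $\gamma_\Lambda^\omega(\cdot|\xi)$ is the Gaussian with covariance $G_\Lambda$, the Green function of the walk killed on exiting $\Lambda$, and with mean the $\Delta_\omega$-harmonic extension $h^\xi_\Lambda$ of $\xi|_{\Lambda^c}$ into $\Lambda$. For $\mathcal{N}(\eta,G^\omega)$ the conditional law of $\phi_\Lambda$ given $\phi_{\Lambda^c}$ is Gaussian by Gaussian conditioning, and it suffices to match its parameters. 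The strong Markov property at the exit time $\tau_\Lambda$ gives
\[G^\omega(x,y)=G_\Lambda(x,y)+\mathbf{E}_x^\omega\big[G^\omega(X_{\tau_\Lambda},y)\big].\]
This decomposition says that $\phi_\Lambda - \mathbf{E}_\cdot[\phi(X_{\tau_\Lambda})]$ is independent of $\phi_{\Lambda^c}$ and has covariance $G_\Lambda$. The walk leaves the finite set $\Lambda$ almost surely and the exit distribution has finite support, so no integrability issue arises. Harmonicity of $\eta$ makes the mean consistent as well.

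For the structural part, I would follow the standard route for Gaussian specifications (Georgii, Ch.~13). Let $\mu\in\mathscr{G}(\gamma^\omega)$ and take finite volumes $\Lambda_n\uparrow\mathcal{C}(\omega)$. By the DLR equations and reverse martingale convergence, $\mathbb{E}_\mu[\phi(x)\mid\mathscr{F}_{\Lambda_n^c}] = h^{\phi}_{\Lambda_n}(x)$ converges $\mu$-a.s. and in $L^1$ to a tail-measurable limit $\eta(x)$. Each $h_{\Lambda_n}^\phi$ is harmonic on $\Lambda_n$, so $\eta$ is harmonic on all of $\mathcal{C}(\omega)$ almost surely. Under $\gamma_{\Lambda_n}(\cdot|\phi)$ the field $\phi_{\Lambda_n}-h^\phi_{\Lambda_n}$ is centred Gaussian with covariance $G_{\Lambda_n}$, and $G_{\Lambda_n}\uparrow G^\omega$ by monotone convergence, using transience. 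Conditioning on the tail $\sigma$-algebra $\mathscr{T}$ therefore gives $\mu(\cdot\mid\mathscr{T})=\mathcal{N}(\eta,G^\omega)$, which I would check via characteristic functions of $u\in\mathcal{S}$ with finite support first. The mixture representation follows with $\varrho$ the law of $\eta$ under $\mu$. Extremal Gibbs measures are trivial on $\mathscr{T}$, so for them $\eta$ is deterministic.

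The main obstacle I expect is showing that $\eta$ lies in $\mathcal{S}'$ and that the limits make sense without any a priori integrability of $\mu$. I would handle this by using the support of $\mu$ on $\mathcal{S}'$, which gives $\phi\in H_{-N}$ for some random $N$. Together with the maximum-principle bound $|h^\phi_{\Lambda_n}(x)|\le\max_{\partial\Lambda_n}|\phi|$, this yields a.s. polynomial growth of the approximants. I would first work on the events $\{\norm{\phi}_{H_{-N}}\le K\}$, where convergence holds in $L^1$, and then take the union over $N$ and $K$. To upgrade to growth of $\eta$ itself, I would use Fatou's lemma on $\mathbb{E}[|\eta(x)|\,;\,\norm{\phi}_{H_{-N}}\le K]$ combined with the support of $\mathcal{N}(\eta,G^\omega)$ from the existence step.
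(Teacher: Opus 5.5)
Your overall architecture is the standard one the paper defers to (Friedli--Velenik, Ch.~8; Georgii, Thm.~13.24). The existence step is correct, and so is the DLR verification via $G^\omega=G_\Lambda+\mathbf{E}_\cdot[G^\omega(X_{\tau_\Lambda},\cdot)]$. There is, however, a gap in the structural part, exactly where you anticipated trouble, and your proposed repair does not close it.

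\textbf{Integrability fails.} The reverse martingale $\mathbb{E}_\mu[\phi(x)\mid\mathscr{F}_{\Lambda_n^c}]$ requires $\phi(x)\in L^1(\mu)$. This genuinely fails in $\mathscr{G}(\gamma^\omega)$. Constants are $\Delta_\omega$-harmonic and tempered, so $\mu=\int\mathcal{N}(c\mathbf{1},G^\omega)\,\varrho(\dd c)$ with $\varrho$ a Cauchy law is a Gibbs measure on $\mathcal{S}'$ with $\mathbb{E}_\mu\abs{\phi(x)}=\infty$.

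\textbf{The localisation does not help.} The event $A:=\{\norm{\phi}_{H_{-N}}\le K\}$ depends on the spins inside every $\Lambda_n$. It is therefore neither $\mathscr{F}_{\Lambda_n^c}$- nor tail-measurable. So you cannot run the reverse martingale on $A$, and you cannot bound $\mathbb{E}[\abs{h^\phi_{\Lambda_n}(x)};A]$ by $\mathbb{E}[\abs{\phi(x)};A]$, which your Fatou step would need. Moreover, the maximum principle only gives $\abs{h^\phi_{\Lambda_n}(x)}\le\norm{\phi}_{H_{-N}}\max_{y\in\partial\Lambda_n}\jap{y}^N$. This bound blows up as $n\to\infty$ and yields no uniform integrability.

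\textbf{The fix: use only bounded reverse martingales.} By the DLR equations, for every rational $q$,
\[
\mu\big(\phi(x)\le q \,\big|\, \mathscr{F}_{\Lambda_n^c}\big)=\Phi\Big(\frac{q-h^\phi_{\Lambda_n}(x)}{\sqrt{G_{\Lambda_n}(x,x)}}\Big)\xrightarrow[n\to\infty]{}\mu\big(\phi(x)\le q \,\big|\, \mathscr{T}\big)\quad\mu\text{-a.s.},
\]
where $\Phi$ is the standard normal distribution function. The limit may be taken to be the distribution function of a regular conditional law.
\begin{enumerate}
\item Hence $\mathcal{N}(h^\phi_{\Lambda_n}(x),G_{\Lambda_n}(x,x))$ converges weakly to a probability measure.
\item Tightness, together with $G_{\Lambda_n}(x,x)\to G^\omega(x,x)\in(0,\infty)$, forces the means to be bounded with a unique limit point $\eta(x)$. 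This $\eta(x)$ is tail-measurable, and no moments are used.
\item From there your characteristic-function argument with finitely supported $u$ goes through, since those martingales are bounded.
\end{enumerate}

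\textbf{Alternative route.} You can instead follow Georgii. An extremal $\mu$ is, for $\mu$-a.e.\ $\phi$, the limit of $\gamma^\omega_{\Lambda_n}(\cdot\mid\phi)$ on local events. Limits of these Gaussians with convergent covariances are Gaussian with convergent means. The mixture representation then comes from the extremal decomposition.

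\textbf{Temperedness of $\eta$.} Showing $\eta\in\mathcal{S}'$ needs no Fatou argument. Since $\mathcal{N}(0,G^\omega)(\mathcal{S}')=1$ and $\mathcal{S}'$ is a vector space, $\mathcal{N}(\eta,G^\omega)(\mathcal{S}')$ equals $1$ if $\eta\in\mathcal{S}'$ and $0$ otherwise. So $\mu(\mathcal{S}')=1$ forces $\varrho(\mathcal{S}')=1$, where $\varrho$ is the law of $\eta$.
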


The proof of this statement for the case where the conductances $\omega$ are constant can be found in \cite[Chapter~8]{friedli_statistical_2017} and for more general interactions in \cite[Theorem~13.24]{georgii_gibbs_2011}. In our setting the proofs work analogously. 

\subsubsection{Stationary iff reversible}

Later we will see that in the setting of quadratically growing conductances, wild things can happen. To show that all stationary measures are actually also reversible for the dynamics, we therefore need to impose a slightly more strict condition on the growth of the conductances. 

\begin{assumption}[Subquadratic growth]\label{assumption:subquadratic-growth}
    The conductances $(\omega(x,y))_{x,y \in \Z^d}$ are such that there is some \(\epsilon > 0\) with
    \begin{align}\label{assumption-eqn:at_most_subquadratically_growing}
        \sup_{x} \jap{x}^{-2 + \epsilon} \sum_{y \sim x} \omega(x, y)
        < \infty.
    \end{align}
\end{assumption}

Under these assumptions, we can now give the following Gibbsian characterisation of the stationary measures of the SDE~\eqref{sde}. Note that we do not require shift-invariance and that the results in particular apply in any dimension $d\geq 3$ if the conductances are uniformly elliptic. 

\begin{theorem}\label{theorem:stationary-iff-reversible}
    Assume that the conductances $\omega$ satisfy Assumption~\ref{assumption:green_function_is_tempered} and Assumption~\ref{assumption:subquadratic-growth}. Then for any probability measure $\nu$ on $\mathcal{S}'$ the following statements are equivalent. 
    \begin{enumerate}[i.]
        \item $\nu$ is a stationary measure for \eqref{sde}.
        \item $\nu$ is a reversible measure for \eqref{sde}. 
        \item $\nu$ is a Gibbs measure with respect to the specification $\gamma^\omega$. 
    \end{enumerate}
\end{theorem}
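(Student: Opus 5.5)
The plan is to prove the cycle (iii)$\Rightarrow$(ii)$\Rightarrow$(i)$\Rightarrow$(iii). The implication (ii)$\Rightarrow$(i) is immediate: apply reversibility with the test function $G\equiv 1$.

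For (iii)$\Rightarrow$(ii), I would use Proposition~\ref{proposition:gaussian-gibbs-measures}. Every Gibbs measure is a mixture $\int \mathcal{N}(\eta,G^\omega)\,\varrho(\dd\eta)$ over harmonic $\eta$. Since reversibility is preserved under mixtures, it suffices to treat a single $\mathcal{N}(\eta,G^\omega)$. By the formal solution \eqref{sde-formal-solution} and $P_t^\omega\eta=\eta$, the pair $(\phi_0,\phi_t)$ is jointly Gaussian, with mean $(\eta,\eta)$ when $\phi_0\sim\mathcal{N}(\eta,G^\omega)$. Its covariances are
\[
\mathrm{Cov}(\phi_0,\phi_0)=\mathrm{Cov}(\phi_t,\phi_t)=G^\omega, \qquad \mathrm{Cov}(\phi_t,\phi_0)=P_t^\omega G^\omega .
\]
The first identity uses $P_t G P_t + 2\int_0^t P_{2s}\dd s = G$ together with $\int_0^\infty p_s\,\dd s=G$. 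The cross-covariance $P_t^\omega G^\omega$ is symmetric by the symmetry of $p_t^\omega$. Hence the law of $(\phi_0,\phi_t)$ is invariant under swapping the two coordinates, which is exactly reversibility. To make these computations legitimate on $\mathcal{S}'$, I would test against $u\in\mathcal{S}$ and use the semigroup bounds on $\mathcal{S}'$ from Theorem~\ref{theorem:well-posed}.

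The substantive direction is (i)$\Rightarrow$(iii). Let $\nu$ be stationary and take $\phi_0\sim\nu$. For $u\in\mathcal{S}$, the solution formula and independence of the noise from $\phi_0$ give
\[
\widehat\nu(u)=\mathbb{E}\big[e^{i\langle u,P_t\phi_0\rangle}\big]\exp\Big(-\int_0^t\|P_s u\|_2^2\,\dd s\Big),
\]
using $\langle u,P_t\phi_0\rangle=\langle P_t u,\phi_0\rangle$. As $t\to\infty$ the Gaussian factor converges to the Green-function term, by Assumption~\ref{assumption:green_function_is_tempered}. The goal is then to show that the law of $P_t\phi_0$ is asymptotically concentrated on harmonic functions. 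A simpler route is to write $\nu = \text{Law}(P_t\phi_0) * \mathcal{N}(0,G-P_tGP_t)$ for every $t$. I would extract a subsequential limit $\varrho$ of $\text{Law}(P_t\phi_0)$ in $\mathcal{S}'$, then show that $\varrho$ is supported on harmonic functions and that $\nu=\int\mathcal{N}(\eta,G)\varrho(\dd\eta)$. By Proposition~\ref{proposition:gaussian-gibbs-measures}, this makes $\nu$ Gibbs.

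Tightness needs uniform-in-$t$ control of $\|P_t\phi_0\|_{H_{-M}}$ for some larger $M$. Here Assumption~\ref{assumption:subquadratic-growth} enters. Under subquadratic growth, the walk's displacement up to time $t$ is at most polynomial in $t$ with a sub-ballistic gain: in intrinsic distance $\rho^\omega$, which is comparable to $|x|^{\epsilon/2}$, distance grows at most like $\sqrt{t}$. Concretely, I would aim for a Davies-type weighted bound of the form $\|P_t\|_{H_{-N}\to H_{-M}}\lesssim \jap{t}^{C}$.

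The main obstacle is that a polynomial-in-$t$ bound is not uniform in $t$, so it does not directly give tightness. To overcome this, I would instead use $\nu$ itself: stationarity gives $\text{Law}(P_t\phi_0)\preceq\nu$ in the convex/Gaussian-convolution order. From this, for $u\in\mathcal{S}$, the second moments $\mathbb{E}[\langle P_t u,\phi_0\rangle^2]$ are bounded whenever $\nu$ has them; I would first reduce to that case, for example by truncation or by comparing characteristic functions. Harmonicity of limit points follows from $\Delta_\omega P_t\phi_0=-\tfrac{\dd}{\dd t}P_t\phi_0$ by averaging over $[t,t+1]$: the averaged increment $P_{t+1}\phi_0-P_t\phi_0$ must vanish in law as $t\to\infty$. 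This is where subquadratic, rather than merely quadratic, growth is essential, since it rules out the oscillations exhibited in the quadratic examples.
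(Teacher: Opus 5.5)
Your arguments for (ii)$\Rightarrow$(i) and (iii)$\Rightarrow$(ii) are fine and agree with the paper. The identity $P_tG^\omega P_t+\int_0^{2t}P_s\,\dd s=G^\omega$ is exactly what the characteristic-function computation in Proposition~\ref{lemma:gibbs-reversible} encodes. The gap is in (i)$\Rightarrow$(iii), at the claim that limit points $\varrho$ of $\mathrm{Law}(P_t\phi_0)$ are concentrated on harmonic functions because ``the averaged increment $P_{t+1}\phi_0-P_t\phi_0$ must vanish in law''. You give no reason for this. It is essentially the whole content of the theorem, and it genuinely fails for the quadratic conductances of Theorem~\ref{theorem:non-reversible-periodic}, so a quantitative input is needed. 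The natural first-order estimate does not supply it. For $\phi_0\in H_{-N}$ one has $\abs{(\Delta P_s\phi_0)(x)}\le\norm{\phi_0}_{H_{-N}}\norm{P_s\Delta\mathbf{1}_x}_{H_N}$. Interpolating $\norm{\Delta P_s}_{\ell^2\to\ell^2}\le(\e s)^{-1}$ against the growth $\norm{P_s}_{H_{2N}\to H_{2N}}\lesssim(1+s)^{2N/\epsilon}$ only gives $s^{-1/2}(1+s)^{N/\epsilon}$, which does not decay once $N\ge\epsilon/2$. That growth bound is your ``Davies-type'' bound, Lemma~\ref{lemma:polynomial_in_time_operator_norm_bounds}. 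You spent it on tightness, where it is not needed, rather than on this step.

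The missing idea is to replace $\Delta$ by a high power. Since $\norm{\Delta^m\e^{-t\Delta}}_{\ell^2\to\ell^2}\le(m/(\e t))^m$, Cauchy--Schwarz gives $\norm{P_t\Delta^m\mathbf{1}_x}_{H_N}\to0$ for $m>2N/\epsilon$. Hence $(\Delta^mP_t\xi)(x)\to0$ for every $\xi\in H_{-N}$. This must be combined with the $(P_t)$-invariance of $\varrho$, which you never establish but which is immediate. The characteristic function of $\mathrm{Law}(P_t\phi_0)$ is $\widehat\nu(u)\exp(\int_0^t\norm{P_su}_2^2\,\dd s)$. By Assumption~\ref{assumption:green_function_is_tempered}, it converges for every $u\in\mathcal{S}$ to a continuous functional, so the laws converge outright. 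No tightness argument is needed, and your proposed reduction to $\nu$ with second moments by truncation would destroy stationarity anyway. Shifting $t$ then shows $\varrho\circ P_s^{-1}=\varrho$.

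Invariance turns the pointwise decay into $\Delta^m\xi=0$ for $\varrho$-a.e.\ $\xi$, with $m$ depending on the $H_{-N}$-level of $\xi$ (handled as in Lemma~\ref{lemma:vanishing_higher_order_derivatives_from_pointwise_convergence}). A second use of invariance removes the power. If $\Delta^m\xi=0$, then $t\mapsto\langle P_tw,\xi\rangle-\langle w,\xi\rangle$ is a polynomial in $t$ vanishing at $t=0$. Meanwhile $\langle P_tw,\xi\rangle$ has a $t$-independent law under $\varrho$, so the polynomial cannot be non-constant (it would blow up), and $P_t\xi=\xi$ (Lemma~\ref{lemma:invariant_support_from_vanishing_higher_order_derivatives}). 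Without this high-order decay plus polynomial-rigidity argument, your proposal never actually uses subquadratic growth, and (i)$\Rightarrow$(iii) remains unproved.
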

We will explain the proof strategy for this result in Section~\ref{section:proof-strategy}. Its relation to the existing literature on interacting particle systems is provided in Section~\ref{section:literature}.

\subsubsection{Non-tempered Green function}
One could now ask what happens if one drops one of the assumptions. Regarding Assumption~\ref{assumption:green_function_is_tempered} we provide the following answer. 
\begin{theorem}\label{theorem:nonexistence-stationary}
    If \(x \mapsto G^\omega(x, x)\) is not an \(\mathcal{S}'\)-function, then there are no measures $\mu$ that are stationary with respect to $(T_t)_{t\geq 0}$ and supported on $\mathcal{S}'$. 
\end{theorem}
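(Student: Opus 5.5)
Suppose, for contradiction, that $\mu$ is a probability measure on $\mathcal{S}'$ that is stationary for $(T_t)_{t\geq 0}$. We work with characteristic functionals tested against finitely supported $u$. Take $u=\mathbf{1}_{\{x\}}$, or more generally $u\in\mathcal{S}$ with finite support, where the tacit hypothesis is Assumption~\ref{assumption:at_most_quadratically_growing}, so that Theorem~\ref{theorem:well-posed} applies. By \eqref{sde-formal-solution}, for $\phi_0\sim\mu$ independent of the noise, $\phi_t(x)$ equals $(P^\omega_t\phi_0)(x)$ plus an independent centred Gaussian with variance $2\int_0^t \sum_y p^\omega_s(x,y)^2\dd s = 2\int_0^t p^\omega_{2s}(x,x)\dd s$, using symmetry and Chapman--Kolmogorov. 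Stationarity then gives, for every $t\ge 0$,
\begin{align*}
 \widehat{\mu}(\lambda \delta_x) = \widehat{\mu}(\lambda P^\omega_t\delta_x)\,\exp\Bigl(-\lambda^2\int_0^{t} p^\omega_{2s}(x,x)\dd s\Bigr), \qquad \lambda\in\R,
\end{align*}
where $\widehat\mu(u)=\int e^{i\langle \phi,u\rangle}\mu(\dd\phi)$ and $P^\omega_t\delta_x = p^\omega_t(x,\cdot)$ lies in $\mathcal{S}$ (this requires the heat-semigroup results of Section~\ref{section:heat-semigroup}). Since $|\widehat\mu|\le 1$, we obtain $|\widehat\mu(\lambda\delta_x)|\le \exp(-\frac{\lambda^2}{2}\int_0^{2t}p^\omega_s(x,x)\dd s)$.

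There are now two cases. If the walk is recurrent, or more generally if $G^\omega(x,x)=\infty$ for some $x$, letting $t\to\infty$ forces $\widehat\mu(\lambda\delta_x)=0$ for all $\lambda\neq 0$. This contradicts continuity of the characteristic function of the real random variable $\phi(x)$ at $\lambda=0$, where its value is $1$. So no stationary measure exists. In the remaining case $G^\omega(x,x)<\infty$ for all $x$, but $x\mapsto G^\omega(x,x)$ grows faster than every polynomial. Letting $t\to\infty$ shows that the law of $\phi(x)$ under $\mu$ is a convolution of $\mathcal{N}(0,G^\omega(x,x))$ with the weak limit of the laws of $(P_t^\omega\phi)(x)$. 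Tightness of the latter follows from the identity: $|\widehat\mu(\lambda P_t\delta_x)|\ge |\widehat\mu(\lambda\delta_x)|$, which is bounded away from zero near $\lambda=0$, and this gives tightness by the standard truncation inequality. By Anderson's inequality, or simply the convolution structure,
\[
\mu\bigl(|\phi(x)|\le r\bigr)\le \mathcal{N}(0,G^\omega(x,x))([-r,r])\le \frac{2r}{\sqrt{2\pi G^\omega(x,x)}}.
\]

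To conclude, note that $\mu$ is supported on $\mathcal{S}'=\bigcup_{N,C}\{|\phi(x)|\le C\jap{x}^{2N}\ \forall x\}$, so some set $A_{N,C}=\{|\phi(x)|\le C\jap{x}^{2N}\ \forall x\}$ has $\mu(A_{N,C})\ge 1/2$. Then for every $x$ we get $1/2\le 2C\jap{x}^{2N}/\sqrt{2\pi G^\omega(x,x)}$, i.e.\ $G^\omega(x,x)\lesssim \jap{x}^{4N}$. This contradicts the assumption that $x\mapsto G^\omega(x,x)\notin\mathcal{S}'$.

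The main obstacle is making the limit $t\to\infty$ rigorous without knowing that $P^\omega_t\phi\to$ anything. I avoid this entirely by using only the upper bound on the small-ball probability, which follows from the decomposition at each fixed $t$: the law of $\phi(x)$ equals the law of an independent sum $Y_t+Z_t$ with $Z_t\sim\mathcal{N}(0,2\int_0^t p_{2s}(x,x)\dd s)$. Hence $\mu(|\phi(x)|\le r)\le \sup_a \mathbb{P}(|Z_t-a|\le r)$, and letting $t\to\infty$ gives the bound directly, with $1/\sqrt{G}=0$ in the recurrent or infinite-Green case. This unifies both cases, and the only technical input needed is that $P^\omega_t\delta_x$ is a legitimate test function in the stationarity identity, which Theorem~\ref{theorem:well-posed} provides.
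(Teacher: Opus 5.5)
Your proof is correct. The streamlined version in your last paragraph takes a genuinely different, and more elementary, route than the paper's.

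\textbf{The paper's route.} The paper splits into two cases.
\begin{itemize}
\item For $G^\omega(x,x)=\infty$ it argues exactly as you do: the characteristic function of $\phi(x)$ is forced to vanish away from $0$.
\item For finite but non-tempered $G^\omega$, it reruns the disintegration argument of Proposition~\ref{proposition:gaussian-disintegration} on $\R^{\mathcal{C}(\omega)}$ instead of $\mathcal{S}'$. This writes any invariant law as a mixture of Gaussian fields $\mathcal{N}(\psi,G^\omega)$. It then proves a zero--one law $\mathbb{P}(\psi+X_0\in H_{-N})\in\{0,1\}$ via an exponential-moment bound on $\sum_{x\in\Lambda_n}\jap{x}^{-2N}|\psi(x)+X_0(x)|^2$, which uses the joint Gaussian law across all sites.
\end{itemize}

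\textbf{Your route.} You use only one-site marginals at a fixed time. Conditioning on $\phi_0$, the stochastic convolution at $x$ is an independent centred Gaussian with variance $\int_0^{2t}p^\omega_s(x,x)\dd s$. This gives the shift-uniform anti-concentration bound $\mu(|\phi(x)|\le r)\le 2r/\sqrt{2\pi G^\omega(x,x)}$. The countable increasing exhaustion $\mathcal{S}'=\bigcup_k A_{k,k}$ then turns $\mu(\mathcal{S}')=1$ into a polynomial bound on $G^\omega(x,x)$.

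\textbf{Comparison.}
\begin{itemize}
\item Your argument needs no $t\to\infty$ limit of measures, and so no L\'evy-continuity or Kolmogorov-extension step.
\item It handles both cases at once, with $1/\sqrt{G^\omega}=0$ covering the infinite case.
\item It gives a quantitative statement: if $\mu(A_{N,C})\ge 1/2$, then $G^\omega(x,x)\le \tfrac{8C^2}{\pi}\jap{x}^{4N}$ for all $x$.
\item The paper's route buys structural information instead: every invariant law on $\R^{\mathcal{C}(\omega)}$ is a Gaussian mixture, which fits the disintegration framework used throughout Section~\ref{sec:characterisation-stationary-reversible}.
\end{itemize}

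\textbf{A flaw in your middle paragraph.} The tightness justification there is not correct as stated. A lower bound on $|\widehat\mu(\lambda P^\omega_t\delta_x)|$ only controls the symmetrised laws; for example, deterministic $Y_t=t$ has characteristic function of modulus one but is not tight. Tightness does hold, for a different reason: $\phi(x)\overset{d}{=}Y_t+Z_t$ with $Z_t$ independent and tight, so $\mathbb{P}(|Y_t|>2K)\,\mathbb{P}(|Z_t|\le K)\le \mu(|\phi(x)|>K)$. Since your fixed-$t$ argument makes that whole paragraph unnecessary, you can simply drop it.
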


In particular, in dimensions one and two, if the conductances are uniformly elliptic, then the SDE~\ref{sde} is well-posed but does not admit stationary measures supported on $\mathcal{S}'$. 

\subsubsection{Non-reversible stationary measures and periodicity}
Let us now turn to the Assumption~\ref{assumption:subquadratic-growth}. Here, one can construct examples of conductances $\omega$ that do satisfy Assumption~\ref{assumption:green_function_is_tempered} but not Assumption~\ref{assumption:subquadratic-growth} and for which the conclusion of Theorem~\ref{theorem:stationary-iff-reversible} fails, i.e.\, there are non-reversible stationary measures. 

\begin{theorem}[Non-reversible stationary measures and time-periodicity]\label{theorem:non-reversible-periodic}
    Consider the quadratically growing conductances $\omega$ on $\Z$ given by $\omega(n,n+1) = n^2 + 1$ for $n\in \Z$. Then the SDE~\eqref{sde} admits both reversible and non-reversible stationary measures on $\mathcal{S}'(\Z)$. Moreover, there is an initial distribution $\nu_0$ on $\mathcal{S}'(\Z)$ and a time-period $\tau>0$ such that if we initialise \eqref{sde} with $\phi_0 \sim \nu_0$, then $\phi_t \not\sim \nu_0$ for all $t\in(0,\tau)$ and $\phi_\tau \sim \nu_0$. 
\end{theorem}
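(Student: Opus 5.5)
The plan is to exploit the fact that for $\omega(n,n+1)=n^2+1$ the operator $\Delta_\omega$, acting on $\mathcal{S}'(\Z)$, has polynomially growing eigenfunctions for \emph{every} complex eigenvalue. In particular it has eigenfunctions with purely imaginary eigenvalues, and these generate rotations under the heat semigroup. The heuristic comes from the continuum analogue $-\partial_x\big((1+x^2)\partial_x f\big)=zf$. There, power functions $x^\alpha$ solve the equation asymptotically with $-\alpha(\alpha+1)=z$, so both fundamental solutions grow at most like $|x|^{\mathrm{Re}\,\alpha}$ at $\pm\infty$.

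\textbf{Step 1 (tempered complex eigenfunctions).} For $z=i\mu$ with $\mu>0$, I solve the two-sided three-term recurrence
\[
(n^2+1)\big(f(n)-f(n+1)\big)+\big((n-1)^2+1\big)\big(f(n)-f(n-1)\big)=i\mu f(n),\qquad n\in\Z,
\]
from arbitrary initial data $f(0),f(1)\in\C$. The main work is a polynomial growth bound $|f(n)|\le C\jap{n}^K$. To get it, I rewrite the recurrence as a first-order system for $\big(f(n),\,(n^2+1)(f(n+1)-f(n))\big)$. After the rescaling $g(n)=(n^2+1)(f(n+1)-f(n))/n$, this becomes a system of the form $v_{n+1}=(I+A/n+O(n^{-2}))v_n$. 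A discrete Gr\"onwall estimate, $\prod_{k\le n}(1+\|A\|/k+Ck^{-2})\lesssim n^{\|A\|}$, then gives polynomial growth at $+\infty$; the same argument applies at $-\infty$. Hence $f\in\mathcal{S}'$ with $\Delta_\omega f=i\mu f$. Its real and imaginary parts $f_1=\mathrm{Re} f$ and $f_2=\mathrm{Im} f$ satisfy $\Delta_\omega f_1=-\mu f_2$ and $\Delta_\omega f_2=\mu f_1$.

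\textbf{Step 2 (the semigroup rotates).} By the uniqueness part of Theorem~\ref{theorem:well-posed} applied with zero noise (equivalently, uniqueness for the heat equation on $\mathcal{S}'$ from Section~\ref{section:heat-semigroup}), the explicit solution $t\mapsto e^{-i\mu t}f$ coincides with $P^\omega_t f$. Therefore
\[
P^\omega_t f_1=\cos(\mu t)f_1+\sin(\mu t)f_2,\qquad P^\omega_t f_2=-\sin(\mu t)f_1+\cos(\mu t)f_2 .
\]
This is a rotation with period $\tau=2\pi/\mu$. Moreover, $f_1$ and $f_2$ are linearly independent, since otherwise $f$ would be a complex multiple of a real function and $i\mu$ would be a real eigenvalue.

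\textbf{Step 3 (reversible measures).} I check Assumption~\ref{assumption:green_function_is_tempered}. The walk is transient because $\sum_n 1/\omega(n,n+1)<\infty$ in both directions. The Green function is bounded, since $G^\omega(x,x)$ is comparable to the effective resistance to infinity, which is at most $\sum_{n}(n^2+1)^{-1}$. Theorem~\ref{theorem:well-posed} already covers well-posedness. Then Proposition~\ref{proposition:gaussian-gibbs-measures} gives Gibbs measures, for example $\mathcal{N}(0,G^\omega)$. Its reversibility follows from the standard symmetry of the Ornstein--Uhlenbeck dynamics with a Gaussian invariant measure, which has covariance $G^\omega=(\Delta_\omega)^{-1}$; if the ``Gibbs implies reversible'' direction is proved only under Assumption~\ref{assumption:subquadratic-growth}, I will verify this symmetry directly here.

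\textbf{Step 4 (non-reversible stationary measures and periodicity).} Let $\psi\sim\mathcal{N}(0,G^\omega)$ and let $(A,B)$ be independent of $\psi$, and set $\phi_0=\psi+Af_1+Bf_2$. By \eqref{sde-formal-solution}, $\phi_t$ has the law of $\psi'+A_tf_1+B_tf_2$ with $\psi'\sim\mathcal{N}(0,G^\omega)$ and $(A_t,B_t)$ equal to $(A,B)$ rotated by the angle $\mu t$.
\begin{itemize}
\item If $(A,B)$ is uniform on the unit circle, the law of $\phi_0$ is stationary.
\item If $(A,B)=(1,0)$ deterministically, then $\nu_0:=\mathrm{Law}(\phi_0)$ returns at time $\tau$. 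For $t\in(0,\tau)$ we have $\phi_t\not\sim\nu_0$, because the mean $P^\omega_t f_1$ differs from $f_1$ by linear independence.
\end{itemize}
For non-reversibility in the uniform case, I pick $u,v\in\mathcal{S}$ with $\langle f_1,u\rangle=\langle f_2,v\rangle=1$ and $\langle f_2,u\rangle=\langle f_1,v\rangle=0$. Then the cross-covariance satisfies
\[
\mathbb{E}[\langle\phi_0,u\rangle\langle\phi_t,v\rangle]-\mathbb{E}[\langle\phi_0,v\rangle\langle\phi_t,u\rangle]=\sin(\mu t),
\]
because the Gaussian parts contribute symmetric terms $\langle u,P^\omega_tG^\omega v\rangle$ (as $P^\omega_t$ and $G^\omega$ are symmetric and commute). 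This quantity is nonzero for suitable $t$, so the law of $(\phi_0,\phi_t)$ is not symmetric and the measure is not reversible.

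I expect Step~1 to be the main obstacle: a careful asymptotic analysis of the recurrence showing that no solution grows faster than polynomially. Step~2 also needs care, since it relies on uniqueness in $\mathcal{S}'$ to identify $P^\omega_t f$ with $e^{-i\mu t}f$.
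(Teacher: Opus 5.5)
Your proposal is correct and follows essentially the same route as the paper: an $\mathcal{S}'$-eigenfunction of $\Delta_\omega$ with purely imaginary eigenvalue whose real and imaginary parts are rotated by $P_t$, the uniform mixture $\int_0^{2\pi}\mathcal{N}(\psi_\theta,G^\omega)\,\frac{\dd\theta}{2\pi}$ over the orbit as the non-reversible stationary measure, and $\mathcal{N}(P_t f_1,G^\omega)$ as the periodic orbit. Your Step 1 is more work than needed. The paper takes $\mu=1$ and the explicit symmetric solution $h(0)=h(1)=1$, $h(n+1)=h(n)-\frac{i}{n^2+1}\sum_{k=1}^n h(k)$, whose at most linear growth follows by a one-line induction, so your Gr\"onwall analysis is not required.

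The only real difference is how non-reversibility is certified. You compute the antisymmetric cross-covariance $\sin(\mu t)$ directly. The paper instead uses the converse direction of Proposition~\ref{lemma:gibbs-reversible}: reversibility forces the mixing measure to be concentrated on fixed points of $P_t$. That proposition holds under Assumptions~\ref{assumption:at_most_quadratically_growing} and~\ref{assumption:green_function_is_tempered}, so the hedge in your Step 3 is unnecessary.
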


The main idea here is the following. Even for constant conductances, the eigenvalue equation $\Delta_\omega h = i h$ has at least one  non-trivial solution $h:\Z^d \to \C$. 
This is apparently also well-known for the more general framework of so-called \emph{Jacobi operators}, which moreover include \emph{discrete Schrödinger operators}, see \cite{teschl2000jacobi} for a very detailed monograph treatment. In \(d = 1\) these formally read \[f \mapsto (n \mapsto a(n) f(n+1) + a(n-1)f(n-1) + b(n) f(n))\] for \(a, b \colon \Z \to \R\) on functions \(f \colon \Z \to \C\). Our Laplace operators then correspond to \(a(n) = -\omega(n, n+1)\) and \(b(n) = \omega(n, n-1) + \omega(n, n+1)\).
However, the solutions to $\Delta_\omega h = i h$ do typically grow at least (stretched) exponentially and are thus not  elements of $\mathcal{S}'(\Z^d)$. As it turns out, quadratic growth of the conductances is slow enough so that the variable speed random walk does not explode in finite time but still sufficiently fast so that (generalized) eigenfunctions for the eigenvalue $i$ can live in $\mathcal{S}'(\Z^d)$. 

Let us point out that the non-reversible stationary measure $\nu$ can be obtained by integrating along the time-periodic orbit. At the moment, we are not aware of any other non-reversible stationary measures that can be obtained in an essentially different way. 
The full construction is given in Section~\ref{section:non-reversible-stationary-measures}. 

Theorem~\ref{theorem:non-reversible-periodic} tells us that for general quadratically growing conductances, a lot can happen. During the proof, and also already in the roadmap laid out in Section~\ref{section:proof-strategy}, we will see that the time-periodic behaviour of the SDE is connected to non-trivial periodic orbits for the random walk semigroup $(P^\omega_t)_{t\geq 0}$. The following result gives a spectral characterisation of the occurrence of such unexpected behaviour, although under an additional moment assumption. 

\begin{theorem}[Spectral characterisation]\label{theorem:spectral-characterisation}
    Assume that the conductances $\omega$ satisfy Assumption~\ref{assumption:at_most_quadratically_growing}.
    Suppose there is a probability measure \(\varrho\) on \(\mathcal{S}'\) such that
    \begin{enumerate}[(1)]
        \item there is some \(T > 0\) with \(\varrho \circ (P^\omega_T)^{-1} = \varrho \) but \(\varrho\big(\{\xi \in \mathcal{S}' \,\colon\, P_T^\omega \xi = \xi \} \big) < 1\),
        \item the moment condition \begin{align*}
        \varrho\big[\Vert \xi \Vert_{H_{-N}}^2 \big]
        = \sum_{x} \langle x\rangle^{-2N} \varrho\big[\vert \xi(x)\vert^2 \big]
        < \infty
    \end{align*} holds for some \(N \geq 0\) .
    \end{enumerate}
    Then, \(\Delta_\omega\) has an eigenvalue in \(i \mathbb{R} \setminus \{0\}\) with eigenvector $\psi = \xi + i \eta$ such that $\xi,\eta \in H_{-N} \subseteq \mathcal{S}'$.
    The converse holds too and in fact \(\varrho\) can be chosen to be stationary with respect to \((P^\omega_t)_{t \geq 0}\) and with \[\varrho(\{\xi \in \mathcal{S}' \,\colon\, P^\omega_t \xi = \xi \, \forall t \geq 0\}) = 0.\]
\end{theorem}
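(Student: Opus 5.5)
The idea is to pass from the measure \(\varrho\) to a linear object on which \(P^\omega_T\) acts isometrically, and then read off an imaginary eigenvalue from the spectral theorem for the resulting unitary group.

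\textbf{Step 1: a Hilbert space carrying the dynamics.} By the moment condition (2), \(\varrho\) has finite second moments in \(H_{-N}\). Consider
\[
\mathcal{H} := L^2(\varrho; H_{-N})
\]
and its closed subspace \(V\) generated by the coordinate map \(\xi \mapsto \xi\) and its images under the semigroup, i.e.\ spanned by the \(H_{-N}\)-valued maps \(\xi \mapsto P^\omega_t \xi\). One has to check that \(P^\omega_t\) maps \(H_{-N}\) into \(H_{-N'}\) for some \(N'\). The heat-semigroup section, which gives well-posedness under Assumption~\ref{assumption:at_most_quadratically_growing}, should supply bounds of the form \(\|P^\omega_t \xi\|_{H_{-N'}} \le C_t \|\xi\|_{H_{-N}}\). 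To keep notation simple I enlarge \(N\) to \(N'\) throughout.

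\textbf{Step 2: averaging over the period.} Invariance \(\varrho \circ (P^\omega_T)^{-1} = \varrho\) implies that the law of \((P^\omega_{kT}\xi)_{k}\) is stationary. Define the time-averaged measure
\[
\bar\varrho := \frac{1}{T}\int_0^T \varrho \circ (P^\omega_s)^{-1} \, \dd s,
\]
which is invariant under the whole semigroup \((P^\omega_t)_{t\ge0}\). Its moment bound is uniform in \(s \in [0,T]\) because \(C_s\) is locally bounded.

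On \(L^2(\bar\varrho)\), composition with \(P^\omega_t\) gives an isometric semigroup \(U_t F := F\circ P^\omega_t\). Since \(U_T\) is an invertible isometry on the relevant invariant subspace, I restrict to the closed linear span \(W\) of the linear functionals \(\xi \mapsto \langle \xi, u\rangle\), \(u\in\mathcal{S}\), which lie in \(L^2(\bar\varrho)\) by the moment bound. On \(W\), \(U_t\) acts by \(\langle \xi,u\rangle \mapsto \langle \xi, P^\omega_t u\rangle\), so \(W\) is invariant, and \(U_t\) is periodic with period \(T\).

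\textbf{Step 3: Fourier decomposition.} A \(T\)-periodic isometry semigroup is a unitary group with spectrum in \(\frac{2\pi}{T}i\Z\). This gives the orthogonal decomposition
\[
W = \bigoplus_{k\in\Z} W_k, \qquad U_t = e^{2\pi i k t/T} \text{ on } W_k .
\]
Hypothesis (1) says that \(\xi\) is not almost surely fixed by \(P^\omega_T\). I need to translate this into the statement that \(U_t\) is not the identity on \(W\), i.e.\ some \(W_k\) with \(k \ne 0\) is nonzero. This is where averaging over the period and measurability issues have to be handled: \(\varrho\) itself, not only \(\bar\varrho\), carries a non-fixed part, and I would use the Fourier coefficients
\[
\hat\xi_k := \frac1T\int_0^T e^{-2\pi i k s/T} P^\omega_s\xi \,\dd s
\]
taken \(\varrho\)-pathwise. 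Pathwise, \(s\mapsto P^\omega_s\xi\) is continuous and \(T\)-periodic in law. I expect I will in fact need \(P^\omega_T\xi=\xi\) pointwise along a subsequence of the Fourier-coefficient construction.

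\textbf{Step 4: the eigenvector.} More concretely, the eigenvector should be \(\psi := \hat\xi_k\). Formally,
\[
\Delta_\omega \psi = -\frac{d}{ds}\text{-average} = \frac{2\pi i k}{T}\,\psi ,
\]
after integrating by parts, using the periodicity of \(s\mapsto P^\omega_s\xi\). This requires pathwise periodicity. I would get it by showing that
\[
\varrho\big[\|P^\omega_T\xi-\xi\|^2_{H_{-N}}\big] = 0 \quad\text{fails}
\]
only through a genuinely periodic component. Concretely, compute \(\mathbb E\,\|P_{nT}\xi-\xi\|^2\) via the spectral measure of the self-adjoint \(\Delta_\omega\) restricted to an appropriate weighted space: invariance forces the spectral measure of \(\xi\) to be supported where \(e^{-T\lambda}\) has modulus one, i.e.\ on \(i\R\). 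Since \(\Delta_\omega\) is nonnegative and symmetric on \(\ell^2\) but \(\xi\notin\ell^2\), this is the \textbf{main obstacle}: making sense of a spectral measure in \(H_{-N}\), where \(\Delta_\omega\) is not self-adjoint. I would try the \(L^2(\varrho)\) isometry above as the substitute Hilbert space. Then the generator of \(U\) on \(W\) is skew-adjoint, and a nonzero spectral component gives some \(u\) with
\[
\varrho\big[\langle\hat\xi_k,u\rangle\big]\neq0 \ \text{in } L^2, \qquad\text{hence } \hat\xi_k\neq0 \text{ with positive probability.}
\]
Any such realisation gives the eigenvector \(\psi\in H_{-N}+iH_{-N}\), with eigenvalue \(2\pi i k/T \in i\R\setminus\{0\}\).

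\textbf{Step 5: the converse.} Given \(\Delta_\omega\psi = i\lambda\psi\) with \(\lambda\neq0\) and \(\psi=\xi+i\eta\), one gets
\[
P^\omega_t\psi = e^{-i\lambda t}\psi,
\]
by uniqueness of the heat equation on \(\mathcal{S}'\) from the heat-semigroup section. Take \(\Theta\) uniform on \([0,2\pi)\) and let \(\varrho\) be the law of \(\mathrm{Re}(e^{i\Theta}\psi)\). Then \(P_t^\omega\) acts as a rotation of \(\Theta\), so \(\varrho\) is stationary. It has finite \(H_{-N}\)-moments since \(\xi,\eta\in H_{-N}\). It is almost surely not fixed, since \(\psi\) and \(\bar\psi\) are independent because their eigenvalues differ.
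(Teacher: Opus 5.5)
Your converse (Step~5) is correct and is the paper's construction. The forward direction, however, has a genuine gap.

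\textbf{The periodicity claim in Steps~2--4 is false.} The claim that $U_t$ is $T$-periodic on $W$ does not hold. Invariance of $\varrho$ under $P^\omega_T$ makes the \emph{law} of $P^\omega_s\xi$ $T$-periodic in $s$; it does not make the operator $U_T$ trivial. Indeed, $U_T=\mathrm{id}$ on $W$ means $\langle P^\omega_T\xi-\xi,u\rangle=0$ almost surely for all $u\in\mathcal{S}$, i.e.\ $P^\omega_T\xi=\xi$ almost surely, which is exactly what (1) excludes. Concretely, for the paper's measure uniform on the circle $\{\psi_\theta\}$ and $T=1$, $U_1$ is a rotation by angle $1$, not the identity. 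So the decomposition $W=\bigoplus_k W_k$ with spectrum in $\tfrac{2\pi}{T}i\Z$ has no basis.

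The same confusion of periodicity in law with pathwise periodicity defeats your Fourier coefficients. Integrating by parts gives $\Delta_\omega\hat\xi_k=-\tfrac{2\pi i k}{T}\hat\xi_k-\tfrac1T(P^\omega_T\xi-\xi)$. So $\hat\xi_k$ is an eigenvector only on $\{P^\omega_T\xi=\xi\}$, whereas (1) only guarantees mass on the complement of that set.

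What genuinely survives is a semigroup of isometries on $W$, i.e.\ spectral information on $i\R$. But the corresponding spectral measure may be purely continuous, so ``a non-zero spectral component'' does not hand you an eigenvector. For general Hilbert-space operators there are weakly mixing invariant Gaussian measures with finite second moment. Their Koopman operator has no eigenvalues on $W$ at all, even though the operator has many unimodular eigenvectors. So looking for Koopman eigenfunctions cannot work in general.

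\textbf{The missing idea is to use the moment condition (2) essentially.} You use it only to put $\langle\cdot,u\rangle$ into $L^2$. In fact it says that $J\colon u\mapsto\langle\cdot,u\rangle$ is a Hilbert--Schmidt map $H_N\to L^2(\varrho)$: for an orthonormal basis $(e_k)$ of $H_N$, $\sum_k\Vert Je_k\Vert^2=\varrho[\Vert\xi\Vert^2_{H_{-N}}]$. This $J$ intertwines $U_T$ with $P^\omega_T$ acting on $H_N$. It is this compactness that converts a possibly continuous spectral measure into honest unimodular eigenvectors of $P^\omega_T$ in $H_{-N}$.

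That is Flytzanis' theorem (Grivaux--Matheron, Lemma~4.4), which the paper applies to the bounded operator $P^\omega_T$ on the Hilbert space $H_{-N}$. It gives that $\mathrm{supp}(\varrho)$ lies in the closed span of unimodular eigenvectors. Hence (1) yields some $0\neq\phi\in H_{-N}+iH_{-N}$ with $P^\omega_T\phi=e^{i\theta}\phi$ and $e^{i\theta}\neq1$.

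Only at this point does your Fourier idea work, applied to $\phi$ rather than to a random sample $\xi$. The map $g(s)=e^{-i\theta s/T}P^\omega_s\phi$ is genuinely $T$-periodic and continuous in $H_{-N}$, so some coefficient $\hat g_k$ is non-zero. Shifting the integration variable gives $P^\omega_r\hat g_k=e^{i(\theta+2\pi k)r/T}\hat g_k$. Hence $\Delta_\omega\hat g_k=-\tfrac{i(\theta+2\pi k)}{T}\hat g_k$, with a non-zero eigenvalue since $\theta\notin 2\pi\Z$. This is precisely the spectral mapping theorem for point spectrum of the $C_0$-semigroup on $H_{-N}$ that the paper invokes.

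Your worry in Step~1 is unnecessary: $P^\omega_t$ is already bounded on $H_{-N}$ itself, so no enlargement of $N$ is needed.
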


Note that assumption (1) is in particular satisfied for \((P_t)_{t \geq 0}\)-invariant \(\varrho\) if \[\varrho\big(\{\xi \in \mathcal{S}' \,\colon\, P_t \xi = \xi \, \forall t \geq 0 \} \big) < 1.\] Indeed, because of continuity \(\{\xi:P_t\xi=\xi\ \forall t\geq 0\}=\bigcap_{q\in\mathbb{Q}_+}\{\xi:P_q\xi=\xi\}\), so if the left-hand side has mass strictly less than $1$, there must be a $T>0$ such that assumption (1) is satisfied. 

In the setting of Theorem~\ref{theorem:spectral-characterisation} and under the additional Assumption~\ref{assumption:green_function_is_tempered}, the measure $\varrho$ can then be used to construct non-reversible time-stationary measures $\nu$ for the SDE~\eqref{sde} via 
\begin{equation}
    \nu = \int_{\mathcal{S}'}\mathcal{N}(\xi, G^\omega) \varrho(\dd \xi),  
\end{equation}
see Proposition~\ref{proposition:gaussian-disintegration} and Proposition~\ref{lemma:gibbs-reversible} for details. The time-periodic orbits can be constructed in a similar fashion by using the eigenfunction $\psi = \xi + i\eta$ for the eigenvalue $i\theta$ and putting
\begin{equation}
    \nu_t = \mathcal{N}(P^\omega_t \xi, G^\omega), \quad t \geq 0. 
\end{equation}
Because of $P_t^\omega \xi = \cos(\theta t) \xi + \sin(\theta t)\eta$ this gives us a time-periodic orbit in $\mathcal{S}'$ for the semigroup $(P_t)_{t\geq 0}$ with period $2\pi/\abs{\theta}$ and thus $(\nu_t)_{t\geq0}$ is also a time-periodic orbit of laws for the SDE \eqref{sde}. 
\begin{question}
    At the moment, it is not clear to us if the moment condition (2) in Theorem~\ref{theorem:spectral-characterisation} can be dropped. See Remark~\ref{remark:support_on_H_minus_N}, Remark~\ref{remark:approximate_point_spectrum_of_Delta_on_H_minus_N} for more details.
\end{question}

\subsubsection{Random conductances}
Now that we have seen what can possibly go wrong when considering general quadratically growing conductances, one could ask what happens typically, i.e.\, for conductances sampled from some probability distribution. We show that under a certain moment condition, the disordered system almost surely reproduces the behaviour seen in Theorem~\ref{theorem:stationary-iff-reversible}. 

\begin{theorem}\label{theorem:random-conductances}
   Assume that the nearest-neighbour conductances $\omega$ are sampled from an ergodic translation-invariant probability measure $\mathbb{P}$ such that \begin{enumerate}[(1)]
       \item $\mathbb{P}$-almost every sample $\omega$ has exactly one infinite connected component, which we will denote by $\mathcal{C}(\omega)$,  and 
       \item the moment condition 
       \begin{equation}\label{moment-condition}
           \mathbb{E}[\omega(x,y)^{(d/2)+\delta}] < \infty, \quad x,y \in \Z^d,
       \end{equation}
       holds for some $\delta > 0$. 
   \end{enumerate} 
   Then the following statements hold for $\mathbb{P}$-almost every $\omega$. 
   \begin{enumerate}[i.]
       \item There is a well-defined semigroup $(T^\omega_t)_{t\geq 0}$ associated to the SDE \eqref{sde} with conductances $\omega$.
       \item All stationary measures for the SDE \eqref{sde} supported on $\mathcal{S}'(\mathcal{C}(\omega))$ are also reversible and in particular Gibbs measures for the specification $\gamma^\omega$. 
   \end{enumerate}
\end{theorem}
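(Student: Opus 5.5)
The plan is to reduce Theorem~\ref{theorem:random-conductances} to the deterministic results, Theorem~\ref{theorem:well-posed} and Theorem~\ref{theorem:stationary-iff-reversible}, by checking that $\mathbb{P}$-almost every $\omega$ satisfies quantitative versions of Assumptions~\ref{assumption:at_most_quadratically_growing}, \ref{assumption:subquadratic-growth} and \ref{assumption:green_function_is_tempered}, now on the cluster $\mathcal{C}(\omega)$. Since the deterministic proofs only use these assumptions through heat-kernel estimates on $\mathcal{C}(\omega)$, I would restate them with $\Z^d$ replaced by the cluster, where the graph distance on $\mathcal{C}(\omega)$ is compared with the Euclidean one. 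That comparison follows from a.s.\ chemical-distance bounds on the infinite cluster.

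\textbf{Step 1: subquadratic growth.} For fixed $x$, Markov's inequality gives
\[
\mathbb{P}\big(\pi(x)>\langle x\rangle^{2-\epsilon}\big)\le C\langle x\rangle^{-(2-\epsilon)(d/2+\delta)},
\]
where $\pi(x)=\sum_{y\sim x}\omega(x,y)$. The exponent $(2-\epsilon)(d/2+\delta)=d+2\delta-\epsilon d/2-\epsilon\delta$ exceeds $d$ once $\epsilon>0$ is small enough. The sum over $x\in\Z^d$ is then finite, and Borel--Cantelli yields $\pi(x)\le\langle x\rangle^{2-\epsilon}$ for all but finitely many $x$. Hence Assumption~\ref{assumption:subquadratic-growth}, and a fortiori Assumption~\ref{assumption:at_most_quadratically_growing}, hold almost surely. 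This is exactly where the threshold $d/2$ in \eqref{moment-condition} enters. Theorem~\ref{theorem:well-posed} then gives item i.

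\textbf{Step 2: transience and a tempered Green function.} This is the main obstacle, since the conductances may vanish and no lower moment is assumed. The issue is the case $d\ge 3$; in $d\le 2$ the walk is recurrent, and we must argue that there are no stationary measures, via Theorem~\ref{theorem:nonexistence-stationary}. Part ii.\ then holds vacuously, so I would first check that recurrence plus ergodicity makes $G^\omega(x,x)=\infty$ a.s.

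For $d\ge3$, I would use the Rayleigh monotonicity principle. Shrinking all conductances to $\mathbf{1}_{\{\omega>0\}}\wedge 1$ increases effective resistance. Then $G^\omega(x,x)=R_{\mathrm{eff}}(x\to\infty)$ is bounded by the resistance for the supercritical-like percolation cluster of edges with $\omega\ge c$. I would choose $c$ small so that these edges still form a unique infinite cluster containing enough of $\mathcal{C}(\omega)$. For a general ergodic $\mathbb{P}$ this is delicate, so I would try instead to bound $R_{\mathrm{eff}}(x\to\infty)$ by a path-sum or flow argument: connect $x$ to a fixed point $0$ inside $\mathcal{C}(\omega)$ by a path, giving
\[
G^\omega(x,x)\le 2R_{\mathrm{eff}}(x\leftrightarrow 0)+2G^\omega(0,0).
\]
Then $G^\omega(x,x)\le 2\sum_{e\in\gamma_x}\omega(e)^{-1}+2G^\omega(0,0)$ for a path $\gamma_x$ in $\mathcal{C}(\omega)$ from $x$ to $0$. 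Polynomial growth in $|x|$ of the right side then requires only that, almost surely, for all large $x$ there is a path of polynomial length whose edges have conductance at least $\langle x\rangle^{-K}$. Ergodicity together with the positivity of $\omega$ on $\mathcal{C}(\omega)$ should make this work: by stationarity, $\mathbb{P}(0\in\mathcal{C},\ \text{the chemical ball of radius }R \text{ has min conductance}<r)\to0$. Transience itself I would derive from a.s.\ transience being a shift-invariant event, plus a comparison argument; if this proves unprovable in full generality, I would flag that transience must be assumed in $d\ge3$.

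\textbf{Step 3: conclusion.} With these a.s.\ properties in hand, I would apply Theorem~\ref{theorem:stationary-iff-reversible} on $\mathcal{C}(\omega)$, which yields item ii. Finally, I would double-check that the chemical-versus-Euclidean distortion does not break the polynomial weights used in the definition of $\mathcal{S}'(\mathcal{C}(\omega))$; since the definition uses Euclidean $\langle x\rangle$, no distortion arises there.
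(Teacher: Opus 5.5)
Your Step 1 is essentially the paper's argument: Borel--Cantelli applied to the $(d/2+\delta)$-th moment gives Assumption~\ref{assumption:subquadratic-growth} almost surely, and Theorem~\ref{theorem:well-posed} then gives i. Step 2, however, is a genuine gap. You try to prove from the hypotheses that almost every $\omega$ is transient with $x\mapsto G^\omega(x,x)$ in $\mathcal{S}'$. This is false in general, so the argument cannot be completed, and your fallback of adding transience as an assumption changes the theorem.

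No lower-tail condition on $\omega$ is assumed, and $G^\omega(x,x)\ge 1/\pi^\omega(x)$, the mean of the first holding time at $x$. Take i.i.d.\ conductances in $(0,1]$ with $\mathbb{P}(\omega(e)\le \e^{-s})=(1+s)^{-1}$. They satisfy all hypotheses with $\mathcal{C}(\omega)=\Z^d$. Yet Borel--Cantelli over the independent sites of $2\Z^d\cap B_R$ produces, for all large $R$, a site $x_R\in B_R$ with $\pi^\omega(x_R)\le 2d\,\e^{-R^{1/4}}$. Hence $G^\omega(x,x)$ grows super-polynomially and Assumption~\ref{assumption:green_function_is_tempered} fails almost surely. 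For the same reason, your claim that ergodicity plus positivity yields paths with conductances $\ge\langle x\rangle^{-K}$ has no quantitative basis. Transience can also fail in $d\ge 3$: take $\omega$ to be the indicator of the edges of the uniform spanning tree of $\Z^3$, which is a single one-ended, hence recurrent, tree.

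The missing idea is that you need no information about $G^\omega$ at all, because the statement is a dichotomy applied $\omega$ by $\omega$. Fix $\omega$ in the full-measure set from Step 1.
\begin{itemize}
\item If $x\mapsto G^\omega(x,x)$ is not an $\mathcal{S}'$-function, Theorem~\ref{theorem:nonexistence-stationary} (Proposition~\ref{prop:non-existence}) shows there are no stationary measures supported on $\mathcal{S}'$, so ii.\ holds vacuously. This covers the recurrent case $G^\omega\equiv\infty$ in every dimension, not only $d\le 2$ where you invoked it.
\item Otherwise Assumption~\ref{assumption:green_function_is_tempered} holds for this $\omega$. Together with Assumption~\ref{assumption:subquadratic-growth}, Theorem~\ref{theorem:stationary-iff-reversible} applies; concretely, Theorem~\ref{theorem:stationary_measures_under_subquadratic_bounds} combined with Propositions~\ref{proposition:gaussian-disintegration} and~\ref{lemma:gibbs-reversible}.
\end{itemize}
This is how the paper argues.

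Likewise, no chemical-distance comparison is needed, and such bounds are not available for general ergodic $\mathbb{P}$ anyway. The deterministic results are already formulated on $\mathcal{C}(\omega)$ with Euclidean weights $\langle x\rangle$, and their proofs only use that neighbours in $\mathcal{C}(\omega)$ are neighbours in $\Z^d$.
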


Let us recall from Theorem~\ref{theorem:nonexistence-stationary} that there exist \((T^\omega_t)_{t \geq 0}\)-stationary (and then also reversible) probability measures on \(\mathcal{S}'\) if and only if \(x \mapsto G^\omega(x, x)\) is an \(\mathcal{S}'\)-function (\(\mathbb{P}\)-almost surely). So if this is not the case, then $ii.$ in Theorem~\ref{theorem:random-conductances} is a vacuous statement as there are no stationary measures supported on $\mathcal{S}'$. \medskip 

In the case where the moment bound \eqref{moment-condition} only holds for $\delta=0$ we can at least rule out the occurrence of purely imaginary eigenvalues of the Laplacian \(\Delta_\omega\) on \(\mathcal{S}'(\mathcal{C}(\omega))\), but this is a weaker statement than showing that there are no non-trivial stationary orbits. 

\begin{theorem}\label{theorem:random-conductances-2}
    Assume that the nearest-neighbour conductances $\omega$ are sampled from an ergodic translation-invariant probability measure $\mathbb{P}$ such that \begin{enumerate}[(1)]
       \item $\mathbb{P}$-almost every sample $\omega$ has exactly one infinite connected component, which we will denote by $\mathcal{C}(\omega)$,  and 
       \item they satisfy the moment condition 
       \begin{equation}
           \mathbb{E}[\omega(x,y)^{d/2}] < \infty, \quad x,y \in \Z^d. 
       \end{equation}
   \end{enumerate} 
   Then the following statements hold for $\mathbb{P}$-almost every $\omega$. 
   \begin{enumerate}[i.]
       \item There is a well-defined semigroup $(T^\omega_t)_{t\geq 0}$ associated to the SDE \eqref{sde} with conductances $\omega$.
       \item For all $\lambda \in \mathbb{C}\setminus [0,\infty)$ there are no non-trivial solutions $\psi \in  \mathcal{S}'(\mathcal{C}(\omega))$ to the eigenvalue equation $\Delta_\omega \psi = \lambda \psi$ and in particular, there are no $(P^\omega_t)_{t\geq 0}$-invariant probability measures $\varrho$ on $\mathcal{S}'(\mathcal{C}(\omega))$ that satisfy both 
        \begin{equation*}
            \varrho\big(\{\xi \in \mathcal{S}' \,\colon\, P_T^\omega \xi = \xi \} \big) < 1
        \end{equation*}
       for some \(T > 0\) and the moment condition 
        \begin{equation*}
           \varrho\big[\Vert \xi \Vert_{H_{-N}}^2 \big]
        = \sum_{x} \langle x\rangle^{-2N} \varrho\big[\vert \xi(x)\vert^2 \big]
        < \infty
        \end{equation*}
        for some $N\geq 0$. 
     \end{enumerate}
\end{theorem}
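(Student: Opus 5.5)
The plan splits into the well-posedness part $i.$ and the spectral part $ii.$, each reduced to an almost sure polynomial bound on the conductances.

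\textbf{Part $i.$: polynomial growth of $\omega$.} I expect $\mathbb{E}[\omega^{d/2}]<\infty$ to give almost surely
\begin{align*}
\omega(x,y)\le C(\omega)\jap{x}^{2+\epsilon}\quad\text{for every }\epsilon>0.
\end{align*}
This follows from Markov's inequality, $\mathbb{P}[\omega(x,y)>\jap{x}^{2+\epsilon}]\le \mathbb{E}[\omega^{d/2}]\jap{x}^{-d-\epsilon d/2}$, which is summable over $\Z^d$, combined with Borel--Cantelli. This is not quite Assumption~\ref{assumption:at_most_quadratically_growing}. So for $i.$ I would rerun the heat-semigroup construction of Section~\ref{section:heat-semigroup} on $\mathcal{S}'(\mathcal{C}(\omega))$ under the weaker growth bound. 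Since $\mathcal{S}'$ only sees polynomial weights, the loss of $\jap{x}^{\epsilon}$ should be harmless as long as the walk is non-explosive. Non-explosiveness I would derive from ergodicity: by the ergodic theorem, $\sum_{|x|\le n}\omega$ is of order $n^d$, so the walk cannot cross annuli arbitrarily fast. I would then follow the proof of Theorem~\ref{theorem:well-posed} verbatim to obtain $(T^\omega_t)$.

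\textbf{Part $ii.$: Liouville-type argument.} Suppose $\psi\in\mathcal{S}'$ solves $\Delta_\omega\psi=\lambda\psi$ with $\lambda\notin[0,\infty)$, say $|\psi(x)|\le C\jap{x}^{N}$. I would test against a cutoff $\chi_R^2$, where $\chi_R(x)=\chi(|x|/R)$ is smooth, supported in $B_{2R}$, and equal to $1$ on $B_R$. Summation by parts gives
\begin{align*}
\lambda\sum_x\chi_R^2|\psi|^2=\frac12\sum_{x\sim y}\omega(x,y)\big(\psi(x)-\psi(y)\big)\overline{\big(\chi_R^2\psi(x)-\chi_R^2\psi(y)\big)}.
\end{align*}
The standard Caccioppoli manipulation then splits the right-hand side into the Dirichlet energy $\mathcal{E}(\chi_R\psi)$, which is real and nonnegative, plus a cross term bounded by
\begin{align*}
\sum_{x\sim y}\omega(x,y)\,|\nabla\chi_R|^2\,|\psi|^2\lesssim R^{-2}\sum_{R\le|x|\le 2R}\omega\,|\psi|^2,
\end{align*}
up to a term $\sum\omega|\nabla\chi_R||\chi_R||\nabla\psi||\psi|$ handled by Cauchy--Schwarz. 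Writing $\lambda=\mathcal{E}/\|\chi_R\psi\|^2+\text{error}$, the point is that the distance from $\lambda$ to $[0,\infty)$ is positive. Hence
\begin{align*}
\|\chi_R\psi\|_2^2\le C_\lambda\, R^{-2}\sum_{R\le|x|\le2R}\omega\,|\psi|^2 \;+\; \text{(energy-type terms)}.
\end{align*}
In this inequality I use the moment bound only through weighted Hölder. By the ergodic theorem, $R^{-d}\sum_{|x|\le 2R}\omega^{d/2}$ stays bounded, so Hölder with exponents $d/2$ and $d/(d-2)$ gives
\begin{align*}
R^{-2}\sum_{R\le |x|\le 2R}\omega|\psi|^2\le C\,\Big(\sum_{R\le|x|\le2R}|\psi|^{2d/(d-2)}\Big)^{(d-2)/d}.
\end{align*}
I would close this with a discrete Sobolev inequality on annuli, iterating dyadically.

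Choosing $f(R)=\|\chi_R\psi\|_2^2$, which grows polynomially, the goal is a recursion of the form $f(R)\le \theta f(4R)$ with $\theta<1$ small once $|\lambda|$ is fixed. A polynomially growing $f$ satisfying this must vanish, which gives $\psi=0$.

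The measure statement then follows directly from Theorem~\ref{theorem:spectral-characterisation}. An invariant $\varrho$ with the moment condition and non-trivial orbit would produce an eigenvector in $H_{-N}$ for an eigenvalue in $i\R\setminus\{0\}$, contradicting what was just shown. This step requires Assumption~\ref{assumption:at_most_quadratically_growing}, which has only the weakened almost-sure substitute here. So I would check that the proof of Theorem~\ref{theorem:spectral-characterisation} only uses the semigroup on $H_{-N}$ built in $i.$

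\textbf{Main obstacle.} The crux is obtaining a contraction factor $\theta<1$ rather than a mere bounded ratio, using only the borderline moment $d/2$. For $d\le2$ the Sobolev exponent degenerates and must be replaced by a direct $\ell^\infty$ bound. My fallback is to use the imaginary part, $\operatorname{Im}\lambda\,\|\chi_R\psi\|^2=\operatorname{Im}(\text{cross term})$, which removes the energy term entirely when $\operatorname{Im}\lambda\ne0$. I would handle negative real $\lambda$ separately by positivity of $\mathcal{E}$.
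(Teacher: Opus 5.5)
Your Part $ii.$ has a genuine gap at exactly the point you flag as the ``main obstacle'', and the tools you propose cannot close it.

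The Caccioppoli step itself is fine. Using the real part when $\lambda<0$ and the imaginary part otherwise, it gives $\|\chi_R\psi\|_2^2\le C_\lambda W_R$, where $W_R=\sum_{x\sim y}\omega(x,y)|\chi_R(x)-\chi_R(y)|^2(|\psi(x)|^2+|\psi(y)|^2)$. To conclude, $W_R$ must be a \emph{vanishing} fraction of $\|\chi_{3R}\psi\|_2^2$. A fixed $\theta<1$ does not kill polynomial growth: $f(R)=R$ satisfies $f(R)\le\frac12 f(4R)$.

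With Euclidean cutoffs the weight is $\pi^\omega(x)R^{-2}$. Under $\mathbb{E}[\omega^{d/2}]<\infty$, the best pointwise bound in general is $\pi^\omega(x)\lesssim\jap{x}^2$. That is the very scale at which Theorem~\ref{theorem:non-reversible-periodic} produces a polynomially bounded solution of $\Delta_\omega h=ih$.

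Hölder plus a Sobolev inequality only trades $W_R$ for a Dirichlet energy on a slightly larger annulus. Caccioppoli bounds that energy only by $C(|\lambda|\,\|\chi_{4R}\psi\|_2^2+W_{4R})$, with no small factor. So at best you reach a bounded ratio $f(R)\le C_\lambda f(4R)$, which proves nothing. Moreover, an $\omega$-weighted Sobolev inequality on $\Cluster(\omega)$ would require lower bounds or negative moments of $\omega$ and isoperimetric control of the cluster, none of which is assumed. The imaginary-part fallback does not help either: it only reproduces $F\le C_\lambda W$, and the energy re-enters through Cauchy--Schwarz.

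The missing idea is to cut off in the intrinsic metric rather than in $|\cdot|$.
\begin{itemize}
\item Take $\chi_R=\chi(\rho^\omega(o,\cdot)/R)$, which is finitely supported because intrinsic balls are finite.
\item Since $\sum_y\omega(x,y)\rho^\omega(x,y)^2\le1$, one has $\sum_y\omega(x,y)|\chi_R(x)-\chi_R(y)|^2\le\|\chi'\|_\infty^2R^{-2}$ pointwise, however large $\omega$ is.
\item Your own computation then gives $F(R)\le C_\lambda R^{-2}F(3R)$ for $F(r)=\sum_{\rho^\omega(o,x)\le r}|\psi(x)|^2$.
\item The randomness is needed only to make $F$ polynomially bounded in $r$. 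This follows from the almost sure bound $\rho^\omega(o,x)\gtrsim\jap{x}^{1/d}$, which is the case $p=d/2$ of Lemma~\ref{lemma:assumption_intrinsic_metric_is_as_fulfilled_under_moments} (proved by the Andres--Deuschel--Slowik path argument).
\item Iterating then gives $\psi=0$.
\end{itemize}
This is the mechanism behind the paper's route. The paper combines that lemma with the Shnol-type Proposition~\ref{proposition:shnol_type_theorem}, which places $\lambda$ in $\sigma(\Delta^{(D)};\ell^2)\subseteq[0,\infty)$.

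Part $i.$ also has a gap, but it is easily repaired. Markov's inequality is lossy here. The layer-cake identity
$\sum_x\mathbb{P}(\omega(0,e_j)\ge\jap{x}^2)=\mathbb{E}\big[\#\{x:\jap{x}^2\le\omega(0,e_j)\}\big]\lesssim1+\mathbb{E}[\omega(0,e_j)^{d/2}]$,
together with Borel--Cantelli, gives Assumption~\ref{assumption:at_most_quadratically_growing} almost surely. This is Lemma~\ref{lemma:random_conductance_growth_from_moments} with $\delta=0$. After that, Theorem~\ref{theorem:well-posed} yields $i.$, and Theorem~\ref{theorem:spectral-characterisation} applies verbatim for the measure statement.

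As written, your plan to rerun Section~\ref{section:heat-semigroup} under $\jap{x}^{2+\epsilon}$ growth is unsupported. The Gronwall estimate of Lemma~\ref{lemma:semigroup_norm_bound_schwartz} needs $K_N<\infty$, which fails for superquadratic conductances. Your ergodic-theorem argument for non-explosion is also only heuristic.
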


In this paper, we have mainly been focussed on the case of deterministic conductances and have not tried to optimize the statement for random conductances too much. Therefore we do not believe that the moment condition \eqref{moment-condition} is optimal. In particular, in the case of i.i.d.~conductances, one does not need any moment bounds in order to get upper and lower bounds on the Green's function, as long as the associated percolation model is supercritical, see \cite{andres_invariance_2013}. It is very plausible that the equivalences in Theorem~\ref{theorem:stationary-iff-reversible} also hold in this setting, but we have not yet made attempts to verify this. 
Currently, we have the impression that the somewhat degenerate behaviour exhibited in Theorem \ref{theorem:non-reversible-periodic} requires a quite rigid structure of the conductances and cannot easily be reproduced by ergodic and translation-invariant disorder. \medskip 

Before we proceed to discuss how our results relate to the existing literature, let us provide an example of random environments that fit into the assumptions of Theorem \ref{theorem:random-conductances}. 

\begin{example}
    A somewhat canonical example for an ergodic, translation-invariant probability measure \(\mathbb{P}\) which can satisfy the properties from Theorem~\ref{theorem:random-conductances} is any \(\mathbb{P}\) that has conductances
    \begin{align}
        \omega(x, y) = c_p(x, y) c_w(x, y),
    \end{align} where \(c_p\) is sampled by a supercritical Bernoulli bond percolation measure with parameter \(p > p_c\) independently from \(c_w\), which is \emph{strictly positive} and sampled from an ergodic, translation-invariant probability measure.
    
    The conductances \(\omega\) now have more or less by definition \(\mathbb{P}\)-a.s.\ a unique infinite cluster \(\Cluster(\omega)\) for \(d \geq 2\) and the moment conditions translate directly to moment conditions on \(c_w\).

    The property that \(x \mapsto G^\omega(x, x)\) be an \(\mathcal{S}'\)-function \(\mathbb{P}\)-a.s.\ is a little bit more subtle. Let us additionally assume that \(d \geq 3\), so that simple random walk on \(\Z^d\) is transient, and \(\mathbb{E}[\frac{1}{\omega(x, y)} \mathbf{1}_{\omega(x, y) > 0}] \sim \mathbb{E}[\frac{1}{c_w(x, y)}] < \infty\), \(x, y \in \Z^d\), to be able to compare the behaviour w.r.t.\ transience of the random walk on \(\Cluster\) with the simple random walk.
    In this situation, it is not so hard to show, identifying the Green function as resistance to infinity and using Thomson's principle (cf. \cite[Theorem 2.11, Proposition 2.12]{LyonsPeres2016}) together with the result for pure Bernoulli percolation \cite[Proposition 6.2]{BarlowHambly2009}, that even \(\mathbb{E}[G^\omega(0, 0) \mathbf{1}_{0 \in \Cluster(\omega)}] < \infty\) and hence
    \begin{align*}
        \mathbb{E}\big[\Vert \sqrt{G^\omega}(\cdot, \cdot) \Vert_{H_{-N}}^2 \big]
        = \sum_{x \in \Z^d} \jap{x}^{-2N} \mathbb{E}[G^\omega(0, 0) \mathbf{1}_{0 \in \Cluster(\omega)}]
        < \infty
    \end{align*} for \(N = N(d) \geq 0\) big enough.

    Let us finally remark that very similar considerations can be made for long-range percolation models instead of Bernoulli bond percolation, using as input the impressive result \cite[Theorem 1.20]{Sapozhnikov2017}.
\end{example}

\section{Related literature}\label{section:literature}
\subsection{The critical Ornstein--Uhlenbeck process}
The critical Ornstein–Uhlenbeck process on the lattice \eqref{sde-intro} was studied in a series of works addressing its large-scale and long-time behaviour, including an invariance principle and large deviations for empirical means \cite{deuschel_1989}, followed by algebraic $L^2$-decay rates in \cite{deuschel_94}. It has also been studied in relation to aging phenomena, both from the perspective of statistical physics \cite{parisi} and mathematics \cite{dembo_deuschel}. The critical Ornstein--Uhlenbeck process is particularly well suited for studying such phenomena because of its linear and thus Gaussian structure. This makes it mathematically tractable while it nevertheless exhibits genuinely nontrivial out-of-equilibrium behaviour. 

\subsection{Stationary iff reversible}
After Holley's initial work \cite{holley_free_1971} classifying all shift-invariant stationary measures of stochastic Ising models, the follow-up works \cite{higuchi_results_1975,moulin_ollagnier_free_1977, kunsch_non_1984,jahnel_attractor_2019,jahnel_dynamical_2023} extended these results to quite general, even non-reversible, classes of interacting particle systems. Beyond shift-invariance, the groundbreaking work \cite{holley_one_1977} opened the door to a full classification of the stationary measures of reversible systems with compact local state space in dimensions one and two. These ideas were later extended to diffusive systems in \cite{holley_diffusions_1981}. Other notable works in this direction include \cite{bogachev_invariance_2004, fritz_infinite_1982,fritz_gradient_1987,fritz_stationary_1986, fritz_stationary_1998, fritz_reversibility_1997}. Note that most of these results either only work in dimensions one and two or under quite restrictive assumptions on the interaction or the local spin space, so in particular, they cannot be applied to the models we consider here. 

Quite recently, in \cite{lammers_non-reversible_2024} it was shown that a stochastic Ising model may admit non-reversible stationary measures on directed trees at  sufficiently large inverse temperatures and with suitable non-uniform couplings. The interesting aspect of this result is that the analogous processes on $\Z^d$, at least for $d=1,2$, do not have non-Gibbsian -- and thus non-reversible -- stationary measures.  To our knowledge, this was the first time that an example of a stochastic Ising model with a non-reversible stationary state has been constructed.
Our results show that a similar phenomenon can happen for the continuous counterpart of the Glauber dynamics, i.e.\, the Langevin dynamics for the Gaussian free field. However, in comparison to \cite{lammers_non-reversible_2024}, we believe that our construction is a bit more transparent and still lives on $\Z^d$, so it does not require a fine-tuned directed tree structure.  
\subsection{Time-periodic behaviour in infinite systems}
Apart from classifying the stationary measures, one major part of the literature on interacting particle systems deals with the actual study of their long-time behaviour, in particular the convergence to time-stationary measures and the question of ergodicity. For continuous-time Markov processes on finite state spaces and diffusions in finite-dimensional spaces, this is a significantly simpler question and rather well-understood. But
for interacting particle systems or diffusions indexed by $\Z^d$ this is much more subtle, not only because of the possible existence of multiple stationary measures but also due to the possibility of time-periodic orbits. 
Over the past years, a number of results have ruled out such time-periodic orbits under various assumptions. In particular, in dimensions one and two, their absence has been established for reversible interacting particle systems with finite local state spaces or under product measure assumptions \cite{jahnel_long-time_2025,koppl_absence_2026}, and more generally for broad classes of interacting particle systems in \cite{mountford_coupling_1995,ramirez_relative_1996,jahnel_restriction_2026}. 
The first two of these works rely on time-averaged variants of the relative entropy method of Holley and Stroock \cite{holley_one_1977}, whereas the others are based on quantitative comparisons with finite-volume systems that are only applicable in one spatial dimension. On the other hand, examples of interacting particle systems exhibiting time-periodic behaviour have recently been constructed in \cite{maes_rotating_2011,jahnel_class_2014,jahnel_time-periodic_2025}. The second and third examples, however, involve interactions of unbounded range, albeit with exponentially decaying strength, and are rather implicit due to the techniques employed in their construction. By contrast, our results provide a transparent and explicit construction of a nearest-neighbour system exhibiting stable time-periodic behaviour, without introducing an explicit breaking of time-reversal symmetry by a deterministic drift as in \cite{maes_rotating_2011}. This last point is somewhat unexpected, as time-periodic behaviour in physical systems typically requires some kind of non-reciprocity, see e.g.\ \cite{collet_rhythmic_2016,avni_nonreciprocal_2025}, or detailed balance to be broken in some other way, see e.g.\ \cite{niethammer_oscillations_2022, pego_temporal_2020}, but in our example this is not really the case. 

\section{Roadmap for the proofs}\label{section:proof-strategy}
As a first step, we start to derive characterisations of the stationary measures of the system of SDEs \eqref{sde}. This will essentially reduce the problem to studying the stationary measures of the semigroup $(P^\omega_t)_{t\geq 0}$ of the variable speed random walk with conductances $\omega$ acting on $\mathcal{S}'(\Z^d)$.

\begin{prop}\label{proposition:gaussian-disintegration}
    Suppose that the conductances $\omega$ satisfy Assumption~\ref{assumption:at_most_quadratically_growing} and Assumption~\ref{assumption:green_function_is_tempered}. 
    Let $\nu$ be a stationary measure for \eqref{sde}. Then 
    \begin{align}\label{eq:disintegration}
        \nu(\cdot) = \int_{\mathcal{S}'(\Z^d)}\mathcal{N}(\xi,G^\omega)(\cdot) \varrho(\dd\xi)
    \end{align}
    where the probability measure $\varrho(\cdot)$ on $\mathcal{S}'(\Z^d)$ satisfies 
    \begin{align}\label{eq:measure-preserving}
        \forall t \geq 0: \ \forall \text{ measurable } \Gamma \subset \mathcal{S}'(\Z^d): \quad \varrho(\xi \in \Gamma) = \varrho(\xi \circ P^\omega_t \in \Gamma).  
    \end{align}
    Conversely, any measure $\varrho(\cdot)$ which satisfies \eqref{eq:measure-preserving} defines a stationary measure $\nu$ for the SDE \eqref{sde} via \eqref{eq:disintegration}. 
\end{prop}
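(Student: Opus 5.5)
Our approach is to work with characteristic functionals and exploit the Gaussian structure of the solution formula \eqref{sde-formal-solution}. By Theorem~\ref{theorem:well-posed}, for $u\in\mathcal{S}$ and deterministic $\phi_0\in\mathcal{S}'$, the pairing $\langle \phi_t,u\rangle$ is Gaussian with mean $\langle P^\omega_t\phi_0,u\rangle=\langle \phi_0,P^\omega_t u\rangle$ (using symmetry of $p^\omega_t$). Its variance is $2\int_0^t\norm{P^\omega_s u}_2^2\dd s$. Hence
\begin{align*}
\mathbb{E}_{\phi_0}\big[e^{i\langle\phi_t,u\rangle}\big]=\exp\Big(i\langle\phi_0,P^\omega_tu\rangle-\int_0^t\norm{P^\omega_su}_2^2\dd s\Big).
\end{align*}
We first check that $P^\omega_t$ maps $\mathcal{S}$ into $\mathcal{S}$ continuously. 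This should follow from the heat-semigroup analysis of Section~\ref{section:heat-semigroup} under Assumption~\ref{assumption:at_most_quadratically_growing}, by duality with the action on $\mathcal{S}'$. Under Assumption~\ref{assumption:green_function_is_tempered} we have $\int_0^\infty\norm{P^\omega_su}_2^2\dd s=\langle u,G^\omega u\rangle/2$ up to the normalisation already fixed in $C_\eta$, and this is finite for $u\in\mathcal{S}$ since $|G^\omega(x,y)|\le\sqrt{G^\omega(x,x)G^\omega(y,y)}$ is tempered.

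\emph{Converse direction.} Let $\varrho$ satisfy \eqref{eq:measure-preserving} and define $\nu$ by \eqref{eq:disintegration}. We compute $\int\mathbb{E}_\phi[e^{i\langle\phi_t,u\rangle}]\nu(\dd\phi)$ by first integrating over $\phi\sim\mathcal{N}(\xi,G^\omega)$. This gives $\exp\big(i\langle\xi,P_tu\rangle-\int_t^\infty\norm{P_su}^2\dd s-\int_0^t\norm{P_su}^2\dd s\big)$, using the semigroup property $\norm{P_s P_t u}=\norm{P_{s+t}u}$. The variance terms combine to the full integral $\int_0^\infty\norm{P_su}_2^2\dd s$. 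Integrating then in $\varrho$ and using the invariance $\xi\circ P_t\overset{d}{=}\xi$ returns the characteristic functional of $\nu$. Since characteristic functionals on $\mathcal{S}$ determine Borel measures on $\mathcal{S}'$ (Minlos/Bochner, $\mathcal{S}$ being nuclear), $\nu T_t=\nu$.

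\emph{Forward direction.} Let $\nu$ be stationary and $\phi_0\sim\nu$. By stationarity and the formula above,
\begin{align*}
\hat\nu(u)=\int\exp\big(i\langle\phi,P_tu\rangle\big)\nu(\dd\phi)\,\exp\Big(-\int_0^t\norm{P_su}^2\dd s\Big).
\end{align*}
The plan is to let $t\to\infty$ and extract $\varrho$ as a limit of the laws of $P^\omega_t\phi_0$, or better via a martingale. We set $\xi_t:=P^\omega_t\phi_0$ in the stationary picture, realised backwards. Concretely, we build a stationary two-sided solution $(\phi_s)_{s\in\R}$ by Kolmogorov extension. We then set $\xi^{(t)}:=\phi_0-\sqrt2\int_{-t}^0 P^\omega_{-s}\dd B_s$, so that $\xi^{(t)}=P^\omega_t\phi_{-t}$. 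For each $u$, $\langle\xi^{(t)},u\rangle$ is a reverse martingale in $t$ with respect to the noise increments, because the noise on $[-t,0]$ is independent of $\phi_{-t}$. It is bounded in $L^2$ after conditioning, or at least tight, because its variance is controlled by $\int_0^\infty\norm{P_su}^2<\infty$. It therefore converges a.s. to some $\langle\xi,u\rangle$, and $\phi_0-\xi$ is independent of $\xi$ and $\mathcal{N}(0,G^\omega)$. Defining $\varrho=\mathrm{Law}(\xi)$ gives \eqref{eq:disintegration}. Invariance \eqref{eq:measure-preserving} then follows by applying the same construction at time $-t$: the limit $\xi'$ for $\phi_{-t}$ satisfies $P_t\xi'=\xi$, and stationarity gives $\xi'\overset{d}{=}\xi$.

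\emph{Main obstacle.} The hard part is making the limit object a genuine $\mathcal{S}'$-valued random variable, i.e.\ choosing a version of $u\mapsto\langle\xi,u\rangle$ that is a.s.\ a tempered distribution, and justifying the pairing with $P_tu$. We first try to show that $\phi_0-\xi^{(t)}$ converges in $H_{-N}$ for large $N$. Its $H_{-N}$-second moment is $\sum_x\jap{x}^{-2N}\cdot 2\int_0^t p_{2s}(x,x)\dd s\le\sum_x\jap{x}^{-2N}G^\omega(x,x)$, which is finite by Assumption~\ref{assumption:green_function_is_tempered}. This is an $L^2$-bounded martingale in a Hilbert space, so it converges in $H_{-N}$, and this settles the issue. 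The remaining work is checking that $\langle\phi,P_tu\rangle=\langle P_t\phi,u\rangle$ on $\mathcal{S}'$, which is part of Theorem~\ref{theorem:well-posed}.
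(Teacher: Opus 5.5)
Your converse direction is the paper's computation. For the forward direction your argument is correct, but it takes a genuinely different route.

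\textbf{How the paper does it.} The paper stays entirely at the level of characteristic functionals. Stationarity and the exponential martingale of Lemma~\ref{lemma:martingale-property} give $C_{\nu_t}(u)=\exp\bigl(\int_0^t\norm{P_su}_2^2\dd s\bigr)C_\nu(u)$ for $\nu_t:=\nu\circ P_t^{-1}$. The limit $t\to\infty$ exists trivially and is continuous on $\mathcal{S}$ by Assumption~\ref{assumption:green_function_is_tempered}. This produces $\varrho$, implicitly via Minlos/L\'evy continuity on $\mathcal{S}'$. Invariance is then read off from $C_\varrho(P_tu)=\lim_s C_{\nu_{s+t}}(u)=C_\varrho(u)$.

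\textbf{How you do it.} You realise $\varrho$ pathwise as the law of the remote-past limit $\xi=\lim_{t\to\infty}P_t\phi_{-t}$ of a two-sided stationary solution. Convergence comes from the $L^2$-bounded $H_{-N}$-valued martingale $M_t=\phi_0-\xi^{(t)}$, which involves only the noise and so needs no moments of $\nu$.

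\textbf{What each buys.} Your route gives an honest decomposition $\phi_0=\xi+M_\infty$, with $M_\infty\sim\mathcal{N}(0,G^\omega)$ independent of a remote-past-measurable $\xi$. It makes $\xi\in\mathcal{S}'$ automatic, with no continuity theorem on $\mathcal{S}'$ needed in this direction. It also yields a coupling proof that $\nu\circ P_t^{-1}\to\varrho$. The price is a path-space construction, which the paper's shorter distributional argument avoids altogether.

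\textbf{Points to tighten.}

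\emph{Kolmogorov extension.} It must be carried out for the pair $(\phi,B)$, not just for $\phi$. For instance, take the laws of $(\phi_s,B_s-B_{-n})_{s\geq -n}$ with $\phi_{-n}\sim\nu$ independent of the noise. These are consistent in $n$ by $\nu T_t=\nu$ and the mild formula/uniqueness in Theorem~\ref{theorem:well-posed}. This is what guarantees that noise increments after time $-T$ are independent of the past $\mathcal{F}_{-T}$, and you use that independence twice.

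\emph{Independence of $\xi$ and $M_\infty$.} This needs an argument. $\xi$ is an $L^2(H_{-N})$-limit of the $\mathcal{F}_{-T}$-measurable $\xi^{(t)}$, $t\geq T$, hence independent of $M_T$. Since $M_T\to M_\infty$, independence passes to the limit.

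\emph{The reverse-martingale sentence.} The claim that $\langle\xi^{(t)},u\rangle$ is a reverse martingale that is bounded in $L^2$ after conditioning, or at least tight, is not usable. $\langle\phi_0,u\rangle$ need not be integrable under $\nu$, and tightness gives no almost sure convergence. Your $M_t$ argument replaces it and should be the one stated.

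\emph{Invariance step.} Only differences converge in $H_{-N}$; $\xi^{(s)}$ itself need not lie there. So $P_t\xi'=\xi$ should be justified via $P_t(\xi'^{(s)}-\xi')\to 0$ in $H_{-N}$, using \eqref{eq:semigroup_norm_bound_tempered}.
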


As a next step, we can then show that the set of reversible measures for the SDE \eqref{sde} and the set of Gibbs measures $\mathscr{G}(\gamma^\omega)$ coincide. 

\begin{prop}\label{lemma:gibbs-reversible}
    Suppose that the conductances $\omega$ satisfy Assumption~\ref{assumption:at_most_quadratically_growing} and Assumption~\ref{assumption:green_function_is_tempered}. 
    If $\nu \in \mathscr{G}(\gamma^\omega)$, i.e.\, if $\nu$ is of the form 
    \begin{align*}
        \nu(\cdot) 
        = 
        \int_{\mathcal{S}'(\Z^d)}\mathcal{N}(\xi,G^\omega)(\cdot)\varrho(\dd\xi)
    \end{align*}
    and the probability measure $\varrho(\cdot)$ satisfies 
    \begin{align}\label{eq:strict-invariance}
        \varrho\left(\xi: \ \forall t \geq 0: \  P^\omega_t \xi  = \xi \right) = 1,
    \end{align}
    then $\nu$ makes the process $(\phi_t)_{t\geq 0}$ defined by \eqref{sde} time-reversible, i.e.\, for all $0\leq s < t < \infty$ and all $u,v \in \mathcal{S}$ we have  
    \begin{equation}\label{reversible-sde}
        \begin{split}
        &\int_{\mathcal{S}'(\Z^d)}\mathbb{E}_\phi
        \left[
        \exp\left(i\left(\langle u, \phi_t\rangle + \langle v, \phi_s \rangle\right)\right)
        \right]
        \nu(\dd\phi)
        \\\
        =
        &\int_{\mathcal{S}'(\Z^d)}\mathbb{E}_\phi
        \left[
        \exp\left(i\left(\langle u, \phi_s\rangle + \langle v, \phi_t \rangle\right)\right)
        \right]
        \nu(\dd\phi).   
        \end{split}
    \end{equation}
    Conversely, every measure $\nu$ that satisfies \eqref{reversible-sde} is an element of $\mathscr{G}(\gamma^\omega)$. 
\end{prop}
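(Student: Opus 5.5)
The plan is to work entirely at the level of characteristic functionals. The process is Gaussian conditionally on its initial value, so both sides of \eqref{reversible-sde} can be computed explicitly. By \eqref{sde-formal-solution} and Theorem~\ref{theorem:well-posed}, for $s<t$ we have $\phi_t = P^\omega_{t-s}\phi_s + \sqrt2\int_s^t P^\omega_{t-r}\,\dd B_r$. The symmetry $p_t^\omega(x,y)=p_t^\omega(y,x)$ gives $\langle u, P_r\phi\rangle = \langle P_r u,\phi\rangle$ for $u\in\mathcal{S}$; this needs $P_r u\in\mathcal{S}$, which I would take from the heat semigroup analysis in Section~\ref{section:heat-semigroup}. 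Conditioning on $\mathcal{F}_s$ then gives
\[
\mathbb{E}_\phi\big[e^{i(\langle u,\phi_t\rangle+\langle v,\phi_s\rangle)}\big]
= e^{-\int_0^{t-s}\|P_r u\|_2^2\dd r}\,\mathbb{E}_\phi\big[e^{i\langle P_{t-s}u+v,\phi_s\rangle}\big].
\]
I then integrate this against $\nu=\int\mathcal{N}(\xi,G^\omega)\varrho(\dd\xi)$. Proposition~\ref{proposition:gaussian-disintegration} already tells us that $\nu$ is stationary, since \eqref{eq:strict-invariance} implies \eqref{eq:measure-preserving}. So the outer expectation becomes $\int \hat{\mathcal N}(\xi,G^\omega)(P_{t-s}u+v)\varrho(\dd\xi)$, where $\hat{\mathcal N}$ denotes the characteristic functional.

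The key step is to evaluate this and see that it is symmetric in $(u,v)$. Write $\tau=t-s$ and $Q(w)=\int_0^\infty\|P_r w\|_2^2\dd r=\langle w,G^\omega w\rangle$. Collecting the mean and covariance contributions, the left-hand side equals
\[
\int \exp\Big(i\langle P_\tau u+v,\xi\rangle - \int_0^\tau\|P_ru\|^2\dd r - Q(P_\tau u+v)\Big)\varrho(\dd\xi).
\]
Since $Q(P_\tau u)=\int_\tau^\infty\|P_r u\|^2\dd r$, the exponent's real part becomes $-Q(u)-Q(v)-2\langle P_\tau u, G^\omega v\rangle$. The cross term $\langle P_\tau u,G^\omega v\rangle=\int_\tau^\infty\langle u,P_{r+\tau}... \rangle$, or more precisely $\int_0^\infty\langle u,P_{2r+\tau}v\rangle\dd r$ up to a factor, is symmetric in $u,v$ by the self-adjointness of $P_r$ on $\ell^2$. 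For the imaginary part, \eqref{eq:strict-invariance} gives $\langle P_\tau u,\xi\rangle=\langle u,P_\tau\xi\rangle=\langle u,\xi\rangle$ for $\varrho$-a.e.\ $\xi$, which is symmetric as well. This proves \eqref{reversible-sde}. Along the way one must justify that $G^\omega$ acts boundedly on $\mathcal{S}$, i.e.\ that $Q(w)<\infty$. This follows from Assumption~\ref{assumption:green_function_is_tempered} via $|G(x,y)|\le \sqrt{G(x,x)G(y,y)}$.

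For the converse, suppose $\nu$ satisfies \eqref{reversible-sde}. Taking $v=0$ and comparing times $s$ and $t$ shows that $\nu$ is stationary, so Proposition~\ref{proposition:gaussian-disintegration} applies and gives a representing $\varrho$ satisfying \eqref{eq:measure-preserving}. Rerunning the computation above for general $\varrho$, the Gaussian factors agree on both sides, so reversibility reduces to
\[
\int e^{i\langle P_\tau u+v,\xi\rangle}\varrho(\dd\xi)=\int e^{i\langle u+P_\tau v,\xi\rangle}\varrho(\dd\xi)
\]
for all $u,v\in\mathcal{S}$. Equivalently, under $\varrho$ the pairs $(P_\tau\xi,\xi)$ and $(\xi,P_\tau\xi)$ have the same joint law. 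Together with the stationarity $P_\tau\xi\sim\xi$, this exchangeability yields
\[
\int|\langle u,P_\tau\xi-\xi\rangle|^2\wedge 1\,\dd\varrho=0 .
\]
A cleaner route is to apply this with $P_{\tau/2}$: reversibility of the stationary linear flow means that $\xi\mapsto P_\tau\xi$ is a measure-preserving map whose time reversal coincides with it. The problem is that it is also a contraction-type flow, so I would try an $L^2$-type argument. For bounded $f(\xi)=e^{i\langle u,\xi\rangle}$, consider $\varrho[|f\circ P_\tau - f|^2]=2\varrho[|f|^2]-2\mathrm{Re}\,\varrho[\bar f\, f\circ P_\tau]$. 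Symmetry makes the map $f\mapsto f\circ P_\tau$ a self-adjoint Markov operator on $L^2(\varrho)$, and the semigroup property makes it a square: $f\circ P_\tau=(f\circ P_{\tau/2})\circ P_{\tau/2}$, so it is a positive operator. It is also an isometry on $L^2(\varrho)$ by invariance. A positive self-adjoint isometry is the identity, hence $f\circ P_\tau=f$ $\varrho$-a.s. Ranging over a countable dense set of $u$ and rational $\tau$, and then using continuity in $t$, gives \eqref{eq:strict-invariance}. Finally, Proposition~\ref{proposition:gaussian-gibbs-measures} (a mixture of $\mathcal{N}(\eta,G^\omega)$ with harmonic $\eta$) identifies $\nu\in\mathscr{G}(\gamma^\omega)$, once I note that $P_t\xi=\xi$ for all $t$ is equivalent to $\Delta_\omega\xi=0$.

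I expect the main obstacle to be the two-step reduction in the converse. The first step is extracting from \eqref{reversible-sde} the reversibility of $\varrho$ under the deterministic flow, which requires cancelling the Gaussian factor inside a $\varrho$-integral; it works because that factor is deterministic. The second step is the positive-isometry argument, including the equivalence $P_t\xi=\xi\ \forall t\iff \Delta_\omega\xi=0$ on $\mathcal{S}'$, for which I would use the generator identity from Theorem~\ref{theorem:well-posed}.
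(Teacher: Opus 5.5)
Your proposal is correct and follows the paper's proof. The forward direction is the same explicit Gaussian computation; you first pass to time $s$ via stationarity, whereas the paper computes from time $0$. In the converse, writing $U_\tau f=f\circ P_\tau$, your positive-isometry step is exactly the paper's halving trick $\varrho[F(P_t\psi)G(\psi)]=\varrho[F(P_{t/2}\psi)G(P_{t/2}\psi)]=\varrho[FG]$, i.e.\ $U_\tau=U_{\tau/2}^*U_{\tau/2}=I$. You also spell out more carefully than the paper how the reversibility identity yields stationarity of $\nu$ and reversibility of $\varrho$.

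There are two harmless slips. First, $\int_0^\infty\|P_rw\|_2^2\,\dd r=\tfrac12\langle w,G^\omega w\rangle$, not $\langle w,G^\omega w\rangle$. Second, your interim claim that exchangeability plus stationarity at a single $\tau$ already forces $P_\tau\xi=\xi$ is unjustified; a measure-preserving involution is a counterexample, and it is the semigroup halving that does the work. Your final argument relies on neither.
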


If we compare \eqref{eq:measure-preserving} with \eqref{eq:strict-invariance}, then we see that in order to show that every stationary measure for the SDE \eqref{sde} is actually reversible, we need to put in some more work. In particular, we need to understand the analogous problem for the discrete heat semigroup $(P^\omega_t)_{t\geq 0}$. 
\medskip 

As seen above, we have reduced the proof of Theorem~\ref{theorem:stationary-iff-reversible} to ruling out the possibility of having \((P^\omega_t)_{t \geq 0}\)-stationary measures \(\varrho\) on \(\mathcal{S}'\) that are \textit{non-fixing}, i.e.\, they have the property that
\begin{align}\label{def:non-fixing}
    \varrho(\{\xi \in \mathcal{S}' \,\colon\,  \Delta_\omega \xi = 0 \}) 
    < 1.
\end{align}
This is precisely the content of the following result that together with the previous characterisations of reversible and stationary measures implies Theorem \ref{theorem:stationary-iff-reversible}. 

\begin{theorem}[Subquadratically growing weights]\label{theorem:stationary_measures_under_subquadratic_bounds}
    Assume there is some \(\epsilon > 0\) such that the subquadratic bound
    \begin{align}
        \sup_{x} \jap{x}^{-2 + \epsilon} \sum_{y \sim x} \omega(x, y)
        < \infty
    \end{align} holds.
    Then, every  \((P^\omega_t)_{t \geq 0}\)-stationary probability measure \(\varrho\) on \(\mathcal{S}'\) has the property
    \begin{align*}
        \varrho(\{\xi \in \mathcal{S}' \,\colon\,  \Delta_\omega \xi = 0 \}) 
        = \varrho(\{\xi \in \mathcal{S}' \,\colon\,  P^\omega_t \xi = \xi \,\forall t \geq 0 \}) 
        = 1.
    \end{align*}
\end{theorem}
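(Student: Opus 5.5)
The plan is to run a localized energy (Lyapunov) argument for the deterministic linear flow $t\mapsto P^\omega_t\xi$ on $\mathcal{S}'$, averaged against the stationary measure $\varrho$. The subquadratic bound enters through a weight adapted to the intrinsic geometry of $\omega$. We cannot assume any moments for $\varrho$, so every functional we integrate will be bounded, and we use only the invariance $\int F(P^\omega_t\xi)\,\varrho(\dd\xi)=\int F(\xi)\,\varrho(\dd\xi)$.

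\textbf{Step 1 (weights).} Define $\rho(x):=\jap{x}^{\epsilon/2}$ and $w_R(x):=\exp(-\rho(x)/R)$ for $R\geq 1$. Since $\abs{\rho(x)-\rho(y)}\lesssim \jap{x}^{-1+\epsilon/2}$ for $x\sim y$, Assumption~\ref{assumption:subquadratic-growth} gives
\begin{equation*}
\sum_{y\sim x}\omega(x,y)\bigl(\rho(x)-\rho(y)\bigr)^2\leq C,
\end{equation*}
so $\rho$ is, up to a constant, a valid intrinsic metric in the sense of \eqref{equation:intrinsic_metric}. Consequently $\sum_{y}\omega(x,y)(w_R(x)-w_R(y))^2\leq C R^{-2}w_R(x)$. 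The weight $w_R$ decays faster than any polynomial, so $N_R(\xi):=\sum_x w_R(x)\xi(x)^2$ is finite for every $\xi\in\mathcal{S}'$. By the semigroup theory of Section~\ref{section:heat-semigroup}, $s\mapsto N_R(P^\omega_s\xi)$ is differentiable.

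\textbf{Step 2 (energy inequality).} Set $\xi_s=P^\omega_s\xi$. Write $\frac{\dd}{\dd s}N_R(\xi_s)=-2\langle w_R\xi_s,\Delta_\omega\xi_s\rangle$, sum by parts, and apply Cauchy--Schwarz together with Step~1. This yields
\begin{equation*}
\frac{\dd}{\dd s}N_R(\xi_s)\leq -\mathcal{E}_R(\xi_s)+C R^{-2}N_R(\xi_s),\qquad \mathcal{E}_R(\xi):=\tfrac12\sum_{x\sim y}\omega(x,y)\bigl(w_R(x)+w_R(y)\bigr)\bigl(\xi(x)-\xi(y)\bigr)^2 .
\end{equation*}
Next apply invariance to the bounded function $F_R=\log(1+N_R)$ (after a standard truncation to make it bounded) and integrate over $[0,t]$. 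This gives
\begin{equation*}
\int \frac1t\int_0^t\frac{\mathcal{E}_R(\xi_s)}{1+N_R(\xi_s)}\,\dd s\,\varrho(\dd\xi)\leq C R^{-2}.
\end{equation*}

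\textbf{Step 3 (conclusion and main obstacle).} Fix a finite box $\Lambda$. On $\Lambda$ we have $\mathcal{E}_R\geq c_\Lambda\,\mathcal{E}_\Lambda$, where $\mathcal{E}_\Lambda$ is the unweighted energy of the bonds in $\Lambda$. Step~2 then gives
\begin{equation*}
\int \frac1t\int_0^t \frac{\mathcal{E}_\Lambda(\xi_s)}{1+N_R(\xi_s)}\,\dd s\,\varrho(\dd\xi)\lesssim R^{-2}.
\end{equation*}
This closes if the normalisation $N_R(\xi_s)$ stays $o(R^2)$ as $R\to\infty$ with positive $\varrho$-probability. Then $\mathcal{E}_\Lambda(\xi_s)=0$ for a.e.\ $s$, so $\xi_s$ is locally constant and in particular harmonic. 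Harmonicity gives $P^\omega_t\xi=\xi$, and stationarity transfers this from $\xi_s$ back to $\xi$.

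The main obstacle is this normalisation step. A tempered $\xi$ with $\norm{\xi}_{H_{-N}}<\infty$ only gives $N_R(\xi)\lesssim R^{2(2N+d)/\epsilon}\norm{\xi}_{H_{-N}}^2$, which is polynomial in $R$ with an exponent that can exceed $2$. To close the gap I would first try two refinements. One is to use a steeper profile $\rho$ and replace the single scale $R$ by a multiscale average of $\log(1+N_R)$ over $R\in[1,\infty)$ with respect to $\dd R/R$, exploiting that the slack $\epsilon$ in the growth exponent makes the errors summable across scales. The other is to use the identity $P^\omega_t\xi-\xi=-\int_0^t\Delta_\omega P^\omega_s\xi\,\dd s$: its left-hand side is tight in law under $\varrho$, which should let us bootstrap the locally vanishing time-averaged energy into the statement $\Delta_\omega\xi=0$ for $\varrho$-a.e.\ $\xi$.
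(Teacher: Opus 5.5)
There is a genuine gap, and it is more serious than the normalisation problem you flag: the conclusion Step~3 aims at is false. You want $\mathcal{E}_\Lambda(\xi_s)=0$, i.e.\ that $\varrho$-a.e.\ $\xi_s$ is locally constant. Take $\omega\equiv 1$ on $\Z^d$, which satisfies the subquadratic bound, and $\varrho=\delta_\eta$ with the harmonic function $\eta(x)=x_1\in\mathcal{S}'$. This $\varrho$ is $(P^\omega_t)_{t\geq0}$-stationary. For it, Steps~1--2 hold verbatim: the resulting inequality $\mathcal{E}_R(\eta)\lesssim R^{-2}N_R(\eta)$ is just the Caccioppoli inequality for a harmonic function. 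Yet $\mathcal{E}_\Lambda(\eta)>0$, and $N_R(\eta)$ grows like $R^{2(d+2)/\epsilon}$, never $o(R^2)$.

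So no refinement of the normalisation, multiscale averaging in $R$ included, can close the argument. A weighted energy controls $\nabla\xi$, which stationary measures need not kill, whereas what has to vanish is $\Delta_\omega\xi$. Your second refinement does not rescue this either. Tightness of $P^\omega_t\xi-\xi$ under $\varrho$ holds for every stationary $\varrho$. The periodic orbits of Theorem~\ref{theorem:non-reversible-periodic}, where $P^\omega_t\xi-\xi$ is even bounded in $t$ but $\Delta_\omega\xi\neq0$, show that tightness cannot by itself force harmonicity. The slack $\epsilon>0$ must enter quantitatively.

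The missing idea is to work with high powers of the Laplacian instead of the energy. The paper shows $(\Delta_\omega^mP^\omega_t\psi)(x)\to0$ as $t\to\infty$ for every $\psi\in H_{-N}$ and $m>2N/\epsilon$:
\begin{itemize}
\item By duality, $\abs{(\Delta_\omega^mP^\omega_t\psi)(x)}\le\norm{\psi}_{H_{-N}}\norm{P^\omega_t\Delta_\omega^m\mathbf{1}_x}_{H_N}$.
\item The interpolation $\norm{f}_{H_N}^2\le\norm{f}_{\ell^2}\norm{f}_{H_{2N}}$ plays the spectral decay $\norm{\Delta_\omega^m P^\omega_t}_{\ell^2\to\ell^2}\le(m/(\mathrm{e}t))^m$ against a \emph{polynomial} bound $\norm{P^\omega_t}_{H_{2N}\to H_{2N}}\lesssim(1+t)^{2N/\epsilon}$.
\item That polynomial bound is where $\epsilon$ is used. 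Rerun the Gronwall argument of Lemma~\ref{lemma:semigroup_norm_bound_schwartz} with $\jap{x}$ replaced by $(\abs{x}^2+\alpha^2)^{1/2}$. This gives an exponential rate $\lesssim\alpha^{-\epsilon}$, at the cost of a polynomial factor in $\alpha$ when comparing norms. Choosing $\alpha=(1+t)^{1/\epsilon}$ turns $\mathrm{e}^{K_Nt/2}$ into a polynomial.
\end{itemize}
Stationarity and bounded convergence then give $\Delta_\omega^m\xi=0$ for $\varrho$-a.e.\ $\xi$, with $m$ depending on the order of $\xi$. Only at this point does a tightness argument work. The expression $\langle P^\omega_tw,\xi\rangle-\langle w,\xi\rangle$ is then a polynomial in $t$ without constant term. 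Since $\langle P^\omega_tw,\xi\rangle$ has a $t$-independent law under $\varrho$, the polynomial must vanish, and $P^\omega_t\xi=\xi$.
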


The main idea behind the proof is the following reduction to a deterministic analytic statement. 

\begin{lemma}[Support in the harmonic functions if the heat semigroup is nice]\label{lemma:support_in_harmonic_functions_if_heat_semigroup_is_nice}
    Suppose the heat semigroup \((P^\omega_t)_{t \geq 0}\) has the property that for every \(\psi \in \mathcal{S}'\) there is an \(m\) which only depends on \(N := \min\{M \geq 0 \colon \psi \in H_{-M} \}\) and such that
\begin{align}\label{equation:pointwise_converge_to_zero_higher_order_derivative}
        (\Delta_\omega^m P^\omega_t \psi)(x)
        \xrightarrow[t \to \infty]{} 0, \quad \text{pointwise for all } x.
    \end{align} 
    Then,  every \((P^\omega_t)_{t \geq 0}\)-stationary probability measure \(\varrho\) on \(\mathcal{S}'\) satisfies
    \begin{align*}
        \varrho(\{\xi \in \mathcal{S}' \,\colon\,  \Delta_\omega \xi = 0 \}) 
        = \varrho(\{\xi \in \mathcal{S}' \,\colon\,  P^\omega_t \xi = \xi \,\forall t \geq 0 \}) 
        = 1.
    \end{align*}
\end{lemma}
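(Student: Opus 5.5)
Throughout I write $P_t := P^\omega_t$. The plan is to show, for each $m\ge 0$, that stationarity and the hypothesis force $\Delta_\omega^m\xi = 0$ $\varrho$-almost surely. Then I run a downward induction on $m$ to reach $\Delta_\omega\xi=0$.

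\textbf{Step 1: reduction to a fixed growth class.} Since $\mathcal{S}'=\bigcup_N H_{-N}$, I first reduce to the sets $A_N:=\{\xi\in\mathcal{S}' : \min\{M\colon \xi\in H_{-M}\}=N\}$. Granting that $P_t$ preserves growth classes (from the heat semigroup analysis in Section~\ref{section:heat-semigroup}, at least in the form that $P_t$ maps $A_N$ into itself), each $A_N$ is invariant. Then the restriction $\varrho_N:=\varrho(\cdot\mid A_N)$ is again $(P_t)$-stationary whenever $\varrho(A_N)>0$. On $A_N$ the exponent $m=m(N)$ from the hypothesis is uniform. If invariance of $A_N$ is delicate, I would instead use the nested sets $\bigcup_{M\le N}A_M$, which are mapped into $H_{-N'}$ for some $N'$ depending only on $N$, and take the maximum of the corresponding exponents $m$.

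\textbf{Step 2: killing high powers of the Laplacian.} Fix $x$ and a bounded continuous $f\colon\R\to\R$, for instance $f=\arctan$, and set $F(\xi):=f((\Delta_\omega^m\xi)(x))$. This is a measurable function of finitely many coordinates of $\xi$, because $\Delta_\omega$ is local. By stationarity, $\int F(\xi)\,\varrho_N(\dd\xi)=\int F(P_t\xi)\,\varrho_N(\dd\xi)$ for all $t$. Here $\Delta_\omega^m P_t\xi=P_t\Delta_\omega^m\xi$, and the hypothesis gives $(\Delta_\omega^m P_t\xi)(x)\to 0$ for every $\xi$ in the support. Dominated convergence then gives $\varrho_N[f((\Delta^m_\omega\xi)(x))]=f(0)$. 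Taking $f(s)=\min(|s|,1)$ shows $(\Delta_\omega^m\xi)(x)=0$ almost surely, for every $x$, so $\Delta_\omega^m\xi=0$ $\varrho$-a.s.

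\textbf{Step 3: descending from $m$ to $1$ (the main obstacle).} On $\{\Delta_\omega^m\xi=0\}$ the semigroup acts as a polynomial in $t$:
\[
P_t\xi=\sum_{k=0}^{m-1}\frac{(-t)^k}{k!}\Delta_\omega^k\xi .
\]
This identity requires justifying the exponential series on $\mathcal{S}'$, via uniqueness for the heat equation from Section~\ref{section:heat-semigroup}: the right-hand side solves $\partial_t u=-\Delta_\omega u$ with polynomial growth. Now suppose $k\ge1$ is the largest index with $\Delta_\omega^k\xi\ne0$ on a set of positive measure, and pick a site $x$ with $\varrho((\Delta_\omega^k\xi)(x)\neq 0)>0$. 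Then $(P_t\xi)(x)$ is a polynomial in $t$ whose leading coefficient $c_k(\xi):=(-1)^k(\Delta^k_\omega\xi)(x)/k!$ is nonzero, so $|(P_t\xi)(x)|\to\infty$ as $t\to\infty$ on that event. Stationarity says the law of $(P_t\xi)(x)$ under $\varrho$ does not depend on $t$, so it is tight, which contradicts $\varrho(|(P_t\xi)(x)|>R)\to\varrho(c_k\ne0)>0$ for every $R$. Hence $\Delta_\omega\xi=0$ almost surely. Finally, $P_t\xi=\xi$ for all $t$ is equivalent to $\Delta_\omega\xi=0$ by the same polynomial formula with $m=1$, which gives both equalities claimed in Lemma~\ref{lemma:support_in_harmonic_functions_if_heat_semigroup_is_nice}.

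The delicate points are the justification of the finite Taylor expansion of $P_t$ on $\ker\Delta_\omega^m\cap\mathcal{S}'$, together with the growth-class invariance used in Step 1.
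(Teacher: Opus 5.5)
Your architecture is the paper's. You kill a high power of $\Delta$ using stationarity, the hypothesis and bounded convergence. Then you use that $t\mapsto P_t\xi$ is a polynomial on $\{\Delta^m\xi=0\}$, together with tightness of the $t$-independent law, to force $\Delta\xi=0$. Your Step~3 is the argument of Lemma~\ref{lemma:invariant_support_from_vanishing_higher_order_derivatives}.

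The step that does not hold as written is Step~1. Section~\ref{section:heat-semigroup} only yields $P_tH_{-N}\subseteq H_{-N}$ via \eqref{eq:semigroup_norm_bound_tempered}. Nothing there says that $P_t$ preserves the exact order, and there is no reason for this in general. For unbounded conductances the heat flow spreads a sparse spike at $x_k$ over a region whose size grows with $\abs{x_k}$, which can lower the order. Your fallback, phrased as ``mapped into $H_{-N'}$'', would not make $\varrho(\cdot\mid H_{-N})$ stationary if $N'>N$.

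What you need is invariance modulo $\varrho$-null sets, and this follows from forward invariance plus stationarity. Since $H_{-N}\subseteq P_t^{-1}H_{-N}$ and $\varrho(P_t^{-1}H_{-N})=\varrho(H_{-N})$, the two sets agree up to a null set. Hence
\[
\varrho(P_t^{-1}B\cap H_{-N})=\varrho\big(P_t^{-1}(B\cap H_{-N})\big)=\varrho(B\cap H_{-N})
\]
for all measurable $B$. The same then holds for $A_N=H_{-N}\setminus H_{-(N-1)}$, cf.\ Remark~\ref{remark:support_on_H_minus_N}. If you condition on $H_{-N}$ and take the maximum of the exponents, you should also note that the hypothesis is monotone in $m$. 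This holds because $\Delta$ is a finite nearest-neighbour sum, so $(\Delta^mP_t\psi)(x)\to0$ for all $x$ implies the same for $\Delta^{m+1}$.

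With this repair your proof goes through. The paper sidesteps conditioning entirely. Using stationarity of $\varrho$ itself, it bounds
\[
\varrho\Big(\max_{x\in B_R}\abs{\Delta^m\xi(x)}>\delta\Big)\le\limsup_{t\to\infty}\varrho\Big[\mathbf{1}_{(\delta,\infty)}\Big(\max_{x\in B_R}\abs{\Delta^mP_t\xi(x)}\Big)\mathbf{1}_{H_{-N(m)}}(\xi)\Big]+\varrho\big(H_{-N(m)}^c\big)=\varrho\big(H_{-N(m)}^c\big)
\]
and lets $m\to\infty$. This gives $\varrho(\exists m\colon\Delta^m\xi=0)=1$ with a $\xi$-dependent $m$.

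Its second step (Lemma~\ref{lemma:invariant_support_from_vanishing_higher_order_derivatives}) is then pointwise in $\xi$. The Taylor formula for $\langle P_tw,\xi\rangle$, $w\in\mathcal{S}$, has a remainder involving $\langle\Delta^nP_sw,\xi\rangle=\langle P_sw,\Delta^n\xi\rangle=0$. So the paper needs neither a uniform $m$, nor a largest index $k$, nor a uniqueness theorem for the heat equation on $\mathcal{S}'$. Your uniqueness route is also valid, e.g.\ via $\tfrac{\dd}{\dd s}\langle P_{t-s}w,u(s)\rangle=0$.
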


Under the assumption of sub-quadratic growth, we can then verify this property of the heat semigroup $(P^\omega_t)_{t\geq0}$. 

\begin{prop}
    Assume that Assumption~\ref{assumption:subquadratic-growth} holds.
    Then, the heat semigroup \((P^\omega_t)_{t \geq 0}\) has the property that for all \(N \geq 0\) it holds 
    \begin{align}
        (\Delta^m_\omega P^\omega_t \psi)(x)
        \xrightarrow[t \to \infty]{} 0, \quad \text{pointwise for all } x,
    \end{align} for every \(\psi \in H_{-N}\) and every \(m > \tfrac{2}{\epsilon} N\).
\end{prop}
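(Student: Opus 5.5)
We need $(\Delta_\omega^m P_t\psi)(x)\to 0$ for $\psi\in H_{-N}$ and $m>2N/\epsilon$. The idea is to use duality together with spectral calculus for $\Delta_\omega$ on a weighted $\ell^2$ space. Write
\[
(\Delta^m_\omega P_t \psi)(x) = \langle \psi, \Delta_\omega^m P_t \delta_x\rangle,
\]
using symmetry of $\Delta_\omega$ on $\ell^2(\mathcal{C}(\omega))$ (counting measure, variable speed) and the identification of $P_t$ on $\mathcal{S}'$ as the adjoint of $P_t$ on $\mathcal{S}$ from Section~\ref{section:heat-semigroup}. By \eqref{equation:H_minus_N_norm_by_duality} it then suffices to show
\[
\|\Delta_\omega^m P_t\delta_x\|_{H_N}\to 0 \quad (t\to\infty)
\]
for each fixed $x$ and each $m>2N/\epsilon$.

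The key step is a weighted estimate converting spatial weight into powers of $\Delta_\omega$. Under Assumption~\ref{assumption:subquadratic-growth} we have $\pi(y)\lesssim \langle y\rangle^{2-\epsilon}$. We aim to show that for $f=\Delta_\omega^m P_t\delta_x$ one has, roughly,
\[
\|f\|_{H_N} \lesssim \|f\|_{\ell^2} + \|\langle y\rangle^{N}\Delta_\omega^{-k}\text{-type}\ldots\|.
\]
More concretely, we use the semigroup/heat-kernel bounds from Section~\ref{section:heat-semigroup} under quadratic growth. These say that $P_s$ is bounded on $H_N$ with at most polynomial growth in $s$, say $\|P_s\|_{H_N\to H_N}\le C(1+s)^{a N}$. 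The subquadratic gain enters here: with the intrinsic metric, conductances of order $\langle y\rangle^{2-\epsilon}$ imply that the walk started at $x$ travels distance at most about $t^{1/\epsilon}$ by time $t$ with overwhelming probability (Davies-type/Gaffney estimates in the intrinsic metric $\rho^\omega$, where $\rho^\omega(0,y)\gtrsim \langle y\rangle^{\epsilon/2}$). Hence
\[
\|P_t\delta_x\|_{H_N}\lesssim t^{2N/\epsilon}\|P_{t/2}\delta_x\|_{\ell^2}+\text{(superpolynomially small)}.
\]

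Combining with spectral theory on $\ell^2$: split $\Delta^m_\omega P_t\delta_x = P_{t/2}\,(\Delta_\omega^m P_{t/2}\delta_x)$. Apply the weighted propagation estimate to the outer $P_{t/2}$, and spectral calculus to $\|\Delta_\omega^m P_{t/2}\delta_x\|_{\ell^2}\le \sup_{\lambda\ge0}\lambda^m e^{-\lambda t/2}\lesssim t^{-m}$. This yields
\[
\|\Delta_\omega^m P_t\delta_x\|_{H_N}\lesssim t^{2N/\epsilon-m}+\text{tail}\to0 \quad\text{when } m>2N/\epsilon.
\]
The bookkeeping of the factor $2$ (weight $\langle y\rangle^{2N}$ in the squared norm versus $\langle y\rangle^N$) must be matched with the speed $t^{1/\epsilon}$ versus $t^{2/\epsilon}$. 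I would adjust exponents via $\rho^\omega$: if $\rho^\omega(0,y)\gtrsim\langle y\rangle^{\epsilon/2}$ and propagation occurs at $\rho\sim t$ (Davies with $\sqrt{\pi}$-normalised metric gives ballistic-in-$\rho$ bounds $e^{-\rho\log(\rho/t)}$), then $\langle y\rangle\lesssim t^{2/\epsilon}$ on the relevant region. This gives exactly the threshold $m>2N/\epsilon$.

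The main obstacle is this weighted propagation bound: proving a Davies–Carne type off-diagonal estimate $p_t(x,y)\le C\exp(-c\rho^\omega(x,y)\log(\rho^\omega(x,y)/t))$ for unbounded conductances, and controlling the tail region $\langle y\rangle\gg t^{2/\epsilon}$, where the factor $\langle y\rangle^{2N}$ must be beaten. I would first try the exponential-weight (Davies perturbation) method using $e^{\lambda\rho^\omega}$ with the intrinsic metric normalised by $\pi$, which is admissible for variable-speed walks, and then optimise in $\lambda$.
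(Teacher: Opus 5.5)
You correctly reduce the statement by duality to showing $\|\Delta^m P_t\delta_x\|_{H_N}\to 0$. There is a genuine gap, though: the one step where Assumption~\ref{assumption:subquadratic-growth} has to enter, your weighted propagation estimate, is never proved, and the way you plan to assemble it would not work.

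First, the input you quote from Section~\ref{section:heat-semigroup} is not there. Under quadratic growth, Lemma~\ref{lemma:semigroup_norm_bound_schwartz} only gives $\|P_s\|_{H_N\to H_N}\le \e^{K_N s/2}$. Polynomial growth in $s$ is exactly what the subquadratic bound has to supply.

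Second, you defer the localisation to a pointwise Davies--Carne bound for unbounded conductances, which you do not establish. The localisation scale is also asserted rather than derived: you first say $t^{1/\epsilon}$, then $t^{2/\epsilon}$.

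Third, even granting heat-kernel bounds for $p_t$, the splitting $\Delta^m P_t\delta_x=P_{t/2}(\Delta^m P_{t/2}\delta_x)$ fails when you apply the propagation estimate to the outer $P_{t/2}$. The norm $\|g\|_{\ell^2}$ does not control $\|P_{t/2}g\|_{H_N}$ unless $g$ is itself localised near the origin, since mass placed far out is essentially not moved by $P_{t/2}$. Localisation of $g=\Delta^mP_{t/2}\delta_x$ is essentially the statement you are trying to prove.

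The missing idea is to move all powers of $\Delta$ onto the finitely supported side: $\Delta^m P_t\delta_x=P_t(\Delta^m\delta_x)$ with $\Delta^m\delta_x\in C_c$. Then you only need to control $P_t$ on one fixed finitely supported function, while spectral calculus gives $\|P_t\Delta^m\delta_x\|_{\ell^2}\le (m/(\e t))^m$.

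The paper proceeds from there as follows:
\begin{itemize}
\item It interpolates, $\|f\|_{H_N}^2\le \|f\|_{\ell^2}\|f\|_{H_{2N}}$.
\item It proves $\|P_t\|_{H_{2N}\to H_{2N}}\lesssim (1+t)^{2N/\epsilon}$ by rerunning the Caccioppoli--Gronwall argument of Lemma~\ref{lemma:semigroup_norm_bound_schwartz} with $\langle x\rangle$ replaced by $\langle x\rangle_\alpha=(|x|^2+\alpha^2)^{1/2}$. The growth constant is then $\lesssim\alpha^{-\epsilon}$, and one chooses $\alpha=(1+t)^{1/\epsilon}$.
\end{itemize}

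Your Davies idea can also be made rigorous, in $L^2$ and without any pointwise heat-kernel bounds:
\begin{itemize}
\item Run the same Gronwall argument with weight $\e^{\lambda\rho(o,\cdot)}$, $0<\lambda\le 1$, using $\sum_y\omega(x,y)\rho(x,y)^2\le 1$. This gives $\|\e^{\lambda\rho(o,\cdot)}P_t v\|_{\ell^2}\le \e^{\e^2\lambda^2 t}\|\e^{\lambda\rho(o,\cdot)}v\|_{\ell^2}$.
\item Take $v=\Delta^m\delta_x$ and $\lambda=t^{-1/2}$, and use $\rho(o,y)\ge c\langle y\rangle^{\epsilon/2}-C$. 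This lower bound is the only place subquadratic growth enters.
\item The $H_N$-mass of $P_t\Delta^m\delta_x$ outside $\langle y\rangle\le (A\sqrt{t}\log t)^{2/\epsilon}$ then tends to zero once $A$ is large.
\item The bulk is at most $(A\sqrt{t}\log t)^{4N/\epsilon}(m/(\e t))^{2m}$.
\end{itemize}
This closes the argument, in fact already for $m>N/\epsilon$.
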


This is proved in Section~\ref{sec:unbounded-weights}, mostly by analytic means. 
In the case of uniformly bounded conductances, one can use a more probabilistic argument that also provides a good intuition for what is going on in the case of unbounded weights and where the difficulties of this more general case lie. For pedagogical reasons, we also provide this argument in Section~\ref{sec:bounded-weights}. 
\medskip 

If the conductances do not satisfy the subquadratic growth bound \eqref{assumption-eqn:at_most_subquadratically_growing} but only the weaker quadratic growth bound \eqref{eqn:assumption-quadratic-growth}, then things start to be a bit less nice. For general quadratically growing conductances, a lot can happen, as the conductances constructed in the proof of Theorem~\ref{theorem:non-reversible-periodic} in Section~\ref{section:non-reversible-stationary-measures} show. 

For dealing with the case of random conductances, we make use of the concept of an \textit{intrinsic metric} induced by the conductances. 

\begin{assumption}\label{assumption:intrinsic_metric}
    Let \(\pi^\omega(x) := \sum_{y \sim x} \omega(x, y)\) and the \emph{intrinsic metric} \(\rho^{\omega}\) be given by 
    \begin{align}
        l^{\omega}(x, y)
        := \frac{1}{\sqrt{1 + (\pi^\omega(x) \lor \pi^\omega(y))}}
    \end{align} for \(x, y \in \Cluster\), \(x \sim y\), and 
    \begin{align}\label{equation:intrinsic_metric}
        \rho^\omega(x, y)
        := \inf_{\gamma \colon x \to y} \sum_{\{z, z'\} \in \gamma} l^\omega(z, z')
    \end{align} where the infimum is over all nearest-neighbour paths in \(\Cluster\) between \(x\) and \(y\).
    Assume that 
    \begin{enumerate}
        \item the intrinsic metric has finite balls
            \begin{align}
                \vert B_{\rho^\omega}(x, r) \vert
                = \vert \{y \,\colon\, \rho^\omega(x, y) \leq r \} \vert
                < \infty
            \end{align} for x and all \(r \geq 0\).

        \item for some (and then equivalently for every) $o\in\mathcal C(\omega)$ and every $\psi\in\mathcal{S}'$,
        $$e^{-\alpha\rho^\omega(o,\cdot)}\psi\in\ell^2,\qquad \alpha>0.$$
    \end{enumerate}
\end{assumption}
In fact, (1) follows from (2). We state it here nevertheless because (1) is a condition in a theorem we cite. Also notice that it indeed is an \emph{intrinsic} metric because
\begin{align*}
    \sum_y \omega(x, y) \rho^\omega(x, y)^2
    \leq \sum_y \frac{\omega(x, y)}{1 + (\pi^{\omega}(x) \lor \pi^\omega(y))}
    \leq \frac{\pi^\omega(x)}{1 + \pi^{\omega}(x)} 
    \leq 1.
\end{align*}

This provides us with a sufficient criterion for the non-existence of non-trivial solutions $\psi \in \mathcal{S}'(\Z^d)$ to the eigenvalue equation $\Delta_\omega \psi = \lambda \psi$ for non-real $\lambda$. 

\begin{prop}[Shnol-type theorem]\label{proposition:shnol_type_theorem}
    Suppose that Assumption~\ref{assumption:intrinsic_metric} holds.
    Then, every \(\psi \in \mathcal{S}'\) with
    \begin{align}
        \Delta_\omega \psi = \lambda \psi
    \end{align} for some \(\lambda \in \mathbb{C} \setminus [0, \infty)\) is identically zero, i.e.\,  \(\psi = 0\).
\end{prop}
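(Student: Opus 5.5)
Let $\psi\in\mathcal{S}'$ satisfy $\Delta_\omega\psi=\lambda\psi$ with $\lambda\in\C\setminus[0,\infty)$. We run a Caccioppoli/Agmon-type argument with exponential cutoffs in the intrinsic metric. Fix $o\in\Cluster(\omega)$ and $\alpha>0$ small, to be chosen depending on $\lambda$. Set $w(x):=e^{-\alpha\rho^\omega(o,x)}$. By Assumption~\ref{assumption:intrinsic_metric}(2), $w\psi\in\ell^2$ and also $w^{1/2}\psi\in\ell^2$, since $\alpha$ is arbitrary.

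The first step is a weighted energy identity. Pair the eigenvalue equation with $w^2\bar\psi$. This is legitimate by finite truncation, because $w^2\psi\in\ell^2$. Using the discrete product rule for the symmetric form, one gets
\begin{align*}
\lambda\sum_x w(x)^2|\psi(x)|^2
=\frac12\sum_{x\sim y}\omega(x,y)\bigl(\psi(x)-\psi(y)\bigr)\overline{\bigl(w^2\psi(x)-w^2\psi(y)\bigr)}.
\end{align*}
Writing $w^2\psi(x)-w^2\psi(y)$ in terms of $w(x)\psi(x)-w(y)\psi(y)$ and $\psi\,(w(x)-w(y))$, we rewrite the right side as
\begin{align*}
\frac12\sum_{x\sim y}\omega(x,y)\bigl|d(w\psi)(x,y)\bigr|^2 + R,
\end{align*}
where $|R|\le \sum_{x\sim y}\omega(x,y)\,|w(x)-w(y)|^2\,|\psi|^2$ plus mixed terms (standard ground-state/IMS-type identity). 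The first term is real and nonnegative.

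The second step bounds the error. Since $\rho^\omega$ is Lipschitz along edges with constant $l^\omega$, we have $|w(x)-w(y)|\le \alpha\, l^\omega(x,y)\,e^{\alpha l^\omega}\max(w(x),w(y))$, and $l^\omega\le1$. The intrinsic property $\sum_y\omega(x,y)l^\omega(x,y)^2\le1$ then gives $|R|\le C\alpha\bigl(\mathcal{E}(w\psi)^{1/2}\|w\psi\|_2+\alpha\|w\psi\|_2^2\bigr)$, where $\mathcal{E}(w\psi)$ denotes the energy term from the first step.

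The third step uses $\lambda\notin[0,\infty)$. Write $\lambda\|w\psi\|^2=E+R$ with $E\ge0$. Since $\lambda$ lies at positive distance $\delta:=\mathrm{dist}(\lambda,[0,\infty))$ from the positive half-line, we get $|\lambda\|w\psi\|^2-E|\ge\delta\|w\psi\|^2$ and also $|\lambda\|w\psi\|^2-E|\ge c(E+\|w\psi\|^2)$ for some $c=c(\lambda)>0$. Combining with the second step, absorbing the mixed term via Young's inequality, and choosing $\alpha$ small gives $\|w\psi\|^2\le C\alpha\|w\psi\|^2$. Hence $w\psi=0$, so $\psi=0$.

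The main obstacle is making the identity in the first step rigorous for infinite sums with unbounded $\omega$. The plan is to use finite cutoffs $\chi_R$ built from $(R-\rho^\omega(o,\cdot))_+$, which have finite support by Assumption~\ref{assumption:intrinsic_metric}(1), and to multiply by $w$. The same intrinsic-metric bound controls the commutator error, and that error vanishes as $R\to\infty$ because the exponential weight and $e^{-\alpha'\rho}\psi\in\ell^2$ give summability. An alternative if this gets messy is the classical Shnol route: cite the Shnol theorem for graphs with intrinsic metrics in \cite{keller2021graphs} (generalized eigenfunctions with subexponential growth lie in the spectrum), and combine it with self-adjointness of $\Delta_\omega$ on $\ell^2$, which holds by finite balls, so the spectrum lies in $[0,\infty)$.
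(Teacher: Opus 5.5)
Your argument is correct, but it takes a different route from the paper. The paper's proof is essentially your fallback: it cites Shnol's theorem for graphs with intrinsic metrics \cite[Theorem 12.25]{keller2021graphs}. That theorem says a nonzero generalized eigenfunction with $e^{-\alpha\rho(o,\cdot)}\psi\in\ell^2$ for all $\alpha>0$ forces $\lambda\in\sigma(\Delta^{(D)};\ell^2)\subseteq[0,\infty)$. The paper then checks that Assumption~\ref{assumption:intrinsic_metric}(2) supplies the growth condition and that $\rho$ has jump size at most $1$. Since the cited theorem is stated only for real $\lambda$, the paper asserts that the proof works ``mutatis mutandis''.

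Your Agmon-type estimate uses much less. It needs only:
\begin{itemize}
\item nonnegativity of the energy;
\item the intrinsic bound $\sum_y\omega(x,y)l^\omega(x,y)^2\le 1$;
\item the elementary inequality $|\lambda-s|\ge c(\lambda)(1+s)$ for $s\ge 0$.
\end{itemize}
It never touches Weyl sequences, self-adjointness or the spectrum, so it treats complex and negative $\lambda$ directly rather than by an unstated extension of a cited result. What the paper's route buys in exchange is the stronger real-$\lambda$ content of Shnol's theorem (subexponentially bounded eigenfunctions lie in the spectrum), which your argument cannot give but which is not needed here.

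When you write out the truncated version with $W=\chi_R w$, watch the imaginary cross term. The relative bound $|W(x)-W(y)|\lesssim\alpha\, l^\omega(x,y)\min(W(x),W(y))$ fails near the edge of $\mathrm{supp}\,\chi_R$. Bounding the term crudely by $|\psi(x)||\psi(y)|$ would also fail, since $\sum_y\omega(x,y)l^\omega(x,y)$ need not be bounded. Instead use, for real $a,b$,
$(a^2-b^2)\,\mathrm{Im}(p\bar q)=(a-b)\,\mathrm{Im}\bigl((ap-bq)\overline{(p+q)}\bigr)$.
This gives the per-edge bound $|W(x)-W(y)|\,|d(W\psi)(x,y)|\,(|\psi(x)|+|\psi(y)|)$. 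After Cauchy--Schwarz and Young, the part coming from $\chi_R$ is controlled by $\sum_{R-2\le\rho(o,x)\le R+1}w(x)^2|\psi(x)|^2$, which tends to $0$. You then conclude from $\|\chi_R w\psi\|_2\to\|w\psi\|_2$, without needing $\mathcal{E}(w\psi)<\infty$ a priori.
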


In particular, the set of conductances constructed to prove Theorem~\ref{theorem:non-reversible-periodic} in Section~\ref{section:non-reversible-stationary-measures} cannot give rise to an intrinsic metric that satisfies Assumption~\ref{assumption:intrinsic_metric}.

With Proposition~\ref{proposition:shnol_type_theorem} at hand, the proofs of Theorem~\ref{theorem:random-conductances} and Theorem~\ref{theorem:random-conductances-2} then mainly consist of checking that the conditions imposed on the law $\mathbb{P}$ ensure that the conductances satisfy Assumption~\ref{assumption:at_most_quadratically_growing} and Assumption~\ref{assumption:intrinsic_metric} or Assumption~\ref{assumption:subquadratic-growth}.

\subsection*{Notation}
From now on, we will mostly work with a fixed set of conductances $ (\omega(x,y))_{x,y\in \Z^d}$ satisfying the required assumptions and will drop the index $\omega$ from our notations, e.g.\, we will simply write $\Delta$ instead of $\Delta_\omega$ and $(P_t)_{t\geq0}$ instead of $(P_t^\omega)_{t\geq 0}$ and so on. 

\section{The heat semigroup and well-posedness}\label{section:heat-semigroup}
Before we start to deal with the SDE~\eqref{sde} and its stationary measures, we first make an analytic excursion to collect some useful estimates that will in particular allow us to show the well-posedness stated in Theorem~\ref{theorem:well-posed}. The properties established in this section will also help us for the subsequent analysis of the stationary measures. 

We will consider the heat semigroup \((P_t)_{t \geq 0}\) of the variable-speed random walk with formal generator \(-\Delta\) on various function spaces. In particular we will analyse its action on the following spaces which we already introduced at the beginning of Section~\ref{section:setting-results}. 
\begin{itemize}
    \item \(C_c\), the set of finitely supported functions on \(\Cluster\),
    
    \item \(\ell^2\), the space of square-summable functions on \(\Cluster\), with norm
    \begin{align*}
        \Vert u \Vert_{\ell^2}^2
        = \sum_x \vert u(x) \vert^2
    \end{align*}
    
    \item the function space \(H_N\), a weighted \(\ell^2\)-space, with norm
        \begin{align*}
            \Vert u \Vert_{H_{N}}^2
            = \sum_{x} \jap{x}^{2N} \vert u(x)\vert^2,
        \end{align*}
        
    \item the function space \(H_{-N}\), a weighted \(\ell^2\)-space, with norm
        \begin{align*}
            \Vert u \Vert_{H_{-N}}^2
            = \sum_{x} \jap{x}^{-2N} \vert u(x)\vert^2,
        \end{align*}

     \item the Schwartz function space \(\mathcal{S}\) given by
        \begin{align*}
            \mathcal{S}
            = \bigcap_{N \geq 0} H_{N},
        \end{align*}

    \item and the space of tempered functions \(\mathcal{S}'\) given by
        \begin{align*}
            \mathcal{S}'
            = \bigcup_{N \geq 0} H_{-N},
        \end{align*} which is the dual space of \(\mathcal{S}\).
\end{itemize}
Sometimes, we will also implicitly consider the complex-valued versions of these spaces.
We will now establish mapping properties of \((P_t)_{t \geq 0}\) on each of these spaces.

\subsection{Non-explosion and the heat semigroup on \(\ell^2\)}
We first note that the quadratic growth bound prevents finite-time explosions. 
\begin{lemma}[No explosion in finite time]\label{lemma:non-explosive} 
    Suppose that the conductances satisfy Assumption~\ref{assumption:at_most_quadratically_growing}. Then the variable speed random walk on $\Cluster$ is non-explosive. Equivalently, its transition kernel is conservative, i.e.\, 
    \begin{equation*}
        \mathbf{P}_x[\zeta = \infty] = 1 \quad \text{and} \quad \sum_{y\in \Cluster} p_t(x,y)=1
    \end{equation*}
    for every $x\in \Cluster$ and $t\geq 0$. 
\end{lemma}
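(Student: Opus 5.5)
We plan to prove non-explosion with a Lyapunov function, i.e.\ a Khasminskii-type test function that tends to infinity and satisfies a linear drift bound. The natural candidate is $V(x) := \log(1+\jap{x}^2)$, or simply $V(x)=\jap{x}^2$ combined with a Gronwall argument. The choice of $V$ depends on the fact that nearest neighbours $y\sim x$ satisfy $\abs{\jap{y}^2-\jap{x}^2} = \abs{\abs{y}^2-\abs{x}^2} \le 2\abs{x}+1 \lesssim \jap{x}$.

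The key estimate is for the generator $L = -\Delta_\omega$:
\begin{align*}
    (LV)(x) = \sum_{y\sim x}\omega(x,y)\,[V(y)-V(x)].
\end{align*}
We take $V(x)=\log(1+\jap{x}^2)$. A second-order Taylor expansion of $\log$ gives
\begin{align*}
    V(y)-V(x) = \frac{\jap{y}^2-\jap{x}^2}{1+\jap{x}^2} + O\Big(\frac{\jap{x}^2}{\jap{x}^4}\Big) = O(\jap{x}^{-1}).
\end{align*}
A crude bound $\abs{V(y)-V(x)}\lesssim \jap{x}^{-1}$ only yields $LV(x)\lesssim \pi(x)\jap{x}^{-1}\lesssim \jap{x}$, which is too weak. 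We therefore use the symmetry of the lattice more carefully. We write $\jap{y}^2-\jap{x}^2 = 2\langle x, y-x\rangle + 1$. The first-order term is $\sum_{y\sim x}\omega(x,y)\,2\langle x,y-x\rangle/(1+\jap{x}^2)$, and this does not cancel for inhomogeneous $\omega$, so it is bounded by $\pi(x)\abs{x}/\jap{x}^2\lesssim \jap{x}$. This is again not bounded.

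The fix is to work instead with $V(x) = \log\log(e+\jap{x}^2)$, or more robustly to use a stopping-time/martingale estimate directly. Concretely, we plan to use the criterion from Keller et al.\ for weighted graphs: non-explosion follows if there is an intrinsic-type metric whose distance balls are finite and for which the graph is complete. With $\rho$ the path metric built from edge lengths $l(x,y) = c\,\jap{x}^{-1}\wedge\jap{y}^{-1}$, Assumption~\ref{assumption:at_most_quadratically_growing} gives $\sum_y\omega(x,y)l(x,y)^2\le C\pi(x)\jap{x}^{-2}\le 1$ after choosing $c$ small. Any path from $0$ to distance $R$ then has $\rho$-length at least $\sum_{k\le R} c/(k+1)\asymp \log R\to\infty$, so the balls are finite and the metric space is complete. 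A Hopf--Rinow / Grigor'yan-type volume test, namely \cite[Theorem 14.11]{keller2021graphs} as quoted earlier, then applies. We have $\abs{B_\rho(0,r)} \le C e^{d r/c}$, so $\log\abs{B_\rho(0,r)}\lesssim r$ and $\int^\infty r/(\log\abs{B}\vee 1)\,\dd r \ge \int^\infty c'\,\dd r=\infty$. This yields stochastic completeness, i.e.\ $\sum_y p_t(x,y)=1$, which is equivalent to $\mathbf{P}_x[\zeta=\infty]=1$ by the standard identity $\mathbf{P}_x[\zeta>t]=\sum_y p_t(x,y)$.

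The main point to verify carefully is the ball-volume bound. The set $\{y:\rho(0,y)\le r\}$ is contained in the Euclidean ball of radius $e^{r/c'}$ by the logarithmic lower bound on path length. Its cardinality is therefore at most $C e^{dr/c'}$, and the bound does not need to be sharp. The restriction to $\Cluster$ is harmless, since the metric is only made larger. Finally, the equivalence between non-explosion and conservativeness is standard for the minimal process and we will simply cite it.
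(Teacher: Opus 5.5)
Your final argument is correct and is essentially the paper's primary route: the paper also just invokes the volume-growth criterion \cite[Theorem 14.11]{keller2021graphs}. The only difference is that you take edge lengths $c/(\jap{x}\vee\jap{y})$ instead of the paper's $(1+\pi^\omega(x)\vee\pi^\omega(y))^{-1/2}$; under Assumption~\ref{assumption:at_most_quadratically_growing} the latter dominate a constant multiple of yours, so the same logarithmic lower bound on $\rho$ and the same exponential volume growth follow, and the discarded Lyapunov attempts at the start can simply be deleted. The paper additionally gives a self-contained alternative proof in Section~\ref{subsection:H_N_norm_bounds}, which uses the Gronwall bound $\Vert P_t u\Vert_{H_N}\le \e^{K_N t/2}\Vert u\Vert_{H_N}$ to justify $\frac{\dd}{\dd t}\sum_y p_t(x,y)=\sum_y(-\Delta P_t\mathbf{1}_x)(y)=0$.
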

This follows directly from the sufficient (but in general not necessary) criterion for non-explosiveness given in \cite[Theorem 14.11]{keller2021graphs}. We additionally provide an alternative proof in Section~\ref{subsection:H_N_norm_bounds}. 
\medskip 

Let us proceed by shortly discussing the classical case of the discrete heat semigroup on \(\ell^2\), where we can easily give reasonable self-adjoint versions of the formal Laplacian \(\Delta\). 

\begin{definition}[Dirichlet Laplacian, \(\ell^2\) semigroup]\label{definition:dirichlet_laplacian_l2_semigroup}
    Consider the Dirichlet form \(\mathcal{E}^{(D)}\) which arises as the closure of 
    \begin{align*}
        \mathcal{E}(f, g) = \frac{1}{2}\sum_{x, y} \omega(x, y) [f(x)-f(y)] [g(x) - g(y)]
    \end{align*} on \(C_c\) with respect to the norm \(\Vert\cdot\Vert_\mathcal{E} = (\mathcal{E}(\cdot, \cdot) + \Vert\cdot\Vert_{\ell^2}^2)^{1/2}\).
    Denote by \(\Delta^{(D)}\) the (self-adjoint) Dirichlet Laplacian, i.e.\, the unique self-adjoint operator \(A\) with 
    \begin{align*}
        \mathcal{E}^{(D)}(f, g)
        = \langle f, A g \rangle_{\ell^2}
    \end{align*} for all \(f \in \mathrm{Dom}(\mathcal{E}^{(D)})\), \( g \in \mathrm{Dom}(A)\).
    It generates the semigroup \((P_t)_{t \geq 0} := (\mathrm{e}^{-t \Delta^{(D)}})_{t \geq 0}\) on \(\ell^2\), cf. \cite[Section 1]{keller2021graphs}.
\end{definition}

We first collect some elementary properties of the Dirichlet Laplacian $\Delta^{(D)}$ and the associated semigroup on $\ell^2$. 

\begin{lemma}[The semigroup on \(\ell^2\)]\label{lemma:the_semigroup_on_l2}
    \begin{enumerate}[i.]
        \item  For \(f \in \ell^2\), we have the pointwise formula
        \begin{align*}
            (P_t f)(x)
            = (\mathrm{e}^{-t \Delta^{(D)}} f)(x)
            = \sum_{y} p_t(x, y) f(y).
        \end{align*}
        \item Under Assumption~\ref{assumption:at_most_quadratically_growing}, the same is true for the Neumann Laplacian \(\Delta^{(N)}\), which corresponds to the maximal form \(\mathcal{E}^{(N)}\) which is the restriction of \(\mathcal{E}\) to \(\{f \in \ell^2 \,\vert\, \mathcal{E}(f, f) < \infty\}\).
        \item The action of \(\Delta^{(D)}\), \(\Delta^{(N)}\), acting on their respective domains \(\mathrm{dom}(\Delta^{(D)})\), \(\mathrm{dom}(\Delta^{(N)})\), coincides with the formal Laplacian \(\Delta\), i.e.\,
        \begin{align*}
            \Delta^{(C)} f(x)
            = \Delta f(x), \quad f \in \mathrm{dom}(\Delta^{(C)}),
        \end{align*} for \(C \in \{D, N\}\).
    \end{enumerate}
\end{lemma}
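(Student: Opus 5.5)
For part i., I will prove the pointwise formula first for $f\in C_c$ and then extend it by density. The plan is to compare the form semigroup $\mathrm{e}^{-t\Delta^{(D)}}$ with the minimal semigroup of the jump process. Concretely, take finite exhausting sets $\Lambda_n\uparrow\Cluster$ and let $\Delta_{\Lambda_n}$ be the Dirichlet restriction: the formal Laplacian acting on functions vanishing off $\Lambda_n$, which is a finite matrix. On the one hand, $\mathrm{e}^{-t\Delta_{\Lambda_n}}$ has kernel $p^{\Lambda_n}_t(x,y)=\mathbf{P}_x[X_t=y,\ t<\tau_{\Lambda_n}]$, where $\tau_{\Lambda_n}$ is the exit time; this is just the Kolmogorov backward equation for a finite-state killed chain. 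On the other hand, the forms $\mathcal{E}$ restricted to $C_c(\Lambda_n)$ increase to $\mathcal{E}^{(D)}$ in the sense of monotone convergence of closed forms (Kato/Simon). Their union is a core, so $\mathrm{e}^{-t\Delta_{\Lambda_n}}f\to \mathrm{e}^{-t\Delta^{(D)}}f$ strongly in $\ell^2$. Since $\tau_{\Lambda_n}\uparrow\zeta$, monotone convergence gives $p^{\Lambda_n}_t(x,y)\uparrow p_t(x,y)$, with the killing at explosion included. For $f\ge0$ in $C_c$ the two limits agree pointwise. Linearity and positivity then extend the formula to all of $\ell^2$, because $\sum_y p_t(x,y)\le1$ makes $f\mapsto\sum_y p_t(x,y)f(y)$ a bounded map and everything is $\ell^2$-continuous. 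This is the standard identification in \cite[Section~1]{keller2021graphs}, and I would cite it rather than redo it.

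For part iii., I would use the characterisation of form operators. If $g\in\mathrm{dom}(\Delta^{(C)})$, then for every $\delta_x\in C_c\subset\mathrm{dom}(\mathcal{E}^{(C)})$ we have $\mathcal{E}^{(C)}(\delta_x,g)=\langle\delta_x,\Delta^{(C)}g\rangle=(\Delta^{(C)}g)(x)$. Expanding the sum $\frac12\sum\omega(u,v)[\delta_x(u)-\delta_x(v)][g(u)-g(v)]$ leaves only finitely many terms, namely $\sum_{y\sim x}\omega(x,y)(g(x)-g(y))=\Delta g(x)$. This holds in both cases, since for $g\in\mathrm{dom}(\mathcal{E}^{(N)})$ the form $\mathcal{E}(\delta_x,g)$ is defined by the same absolutely convergent (finite) sum.

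For part ii., the claim reduces to showing $\mathcal{E}^{(D)}=\mathcal{E}^{(N)}$, i.e. that $C_c$ is dense in $\{f\in\ell^2:\mathcal{E}(f,f)<\infty\}$ for $\Vert\cdot\Vert_{\mathcal{E}}$. I expect this to be the main obstacle. The plan is to cut off with functions $\chi_R(x)=(1-\rho(o,x)/R)_+$, built from an intrinsic-type metric with finite balls, and apply the Leibniz-type estimate
\[
\mathcal{E}(\chi_R f)^{1/2}\le \mathcal{E}(f)^{1/2}+\Big(\tfrac12\sum_{x,y}\omega(x,y)|\chi_R(x)-\chi_R(y)|^2 f(x)^2\Big)^{1/2}.
\]
The second term is at most $R^{-1}\big(\sup_x\sum_y\omega(x,y)\rho(x,y)^2\big)^{1/2}\Vert f\Vert_{\ell^2}\to0$.

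Under Assumption~\ref{assumption:at_most_quadratically_growing} I would take $\rho$ to be the path metric with edge lengths $c\,(\pi(x)\vee\pi(y)\vee1)^{-1/2}$. Since $\pi(x)\lesssim\jap{x}^2$, each edge near $x$ has length at least $c'\jap{x}^{-1}$. A path from $o$ to distance $r$ in graph terms therefore has length at least $\sum_{k\le r}c'/k\sim\log r\to\infty$, so $\rho$-balls are finite and $\chi_R\in C_c$. Dominated convergence then gives $\chi_Rf\to f$ in $\Vert\cdot\Vert_{\mathcal{E}}$, because $\chi_R\to1$ pointwise and the weighted difference sums converge. Hence $\Delta^{(N)}=\Delta^{(D)}$, and the formula from part i. transfers; alternatively, this is the essential-uniqueness result in \cite{keller2021graphs} for intrinsic metrics with finite balls. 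The one delicate point is that finite balls come only from the logarithmic divergence, which is exactly where quadratic growth enters.
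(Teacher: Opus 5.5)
Your proposal is correct. For i.\ and iii.\ you are essentially reproducing the standard arguments behind the results the paper simply cites (\cite[Theorem 2.31]{keller2021graphs} and \cite[Theorem 1.12]{keller2021graphs}). In iii.\ you implicitly use that $\mathcal{E}^{(D)}$ is the restriction of the closed form $\mathcal{E}^{(N)}$, which is what makes $\mathcal{E}^{(D)}(\delta_x,g)$ equal to the finite sum.

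The genuine difference is in ii. The paper deduces it from the non-explosion of Lemma~\ref{lemma:non-explosive} via \cite{haeseler2012laplacians}. You instead prove $\mathcal{E}^{(D)}=\mathcal{E}^{(N)}$ directly, using Lipschitz cut-offs of an intrinsic metric (essentially the $\rho^\omega$ of Assumption~\ref{assumption:intrinsic_metric}) whose balls are finite because of the logarithmic divergence under quadratic growth. Your route is self-contained and needs no probabilistic input. The paper's is a two-line reduction to a property it needs anyway for well-posedness.

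To make your argument airtight, two small fixes are needed. First, apply the splitting $(1-\chi_R(x))(f(x)-f(y))+(\chi_R(y)-\chi_R(x))f(y)$ to $(1-\chi_R)f$, rather than bounding $\mathcal{E}(\chi_R f)$. Second, count level crossings in the $\Z^d$-norm, not in the graph distance of $\Cluster$.

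Note also that the step you single out as the main obstacle is not one for the counting measure used here. If $0\le f\in\mathrm{dom}(\mathcal{E}^{(N)})$, then $(f-\varepsilon)_+$ is finitely supported because $f\in\ell^2$. Moreover $f\wedge\varepsilon\to0$ in $\Vert\cdot\Vert_{\mathcal{E}}$ by dominated convergence. Writing a general $f$ as $f_+-f_-$ gives $\mathcal{E}^{(D)}=\mathcal{E}^{(N)}$, and hence ii., without Assumption~\ref{assumption:at_most_quadratically_growing} at all.
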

\begin{proof}
    \textit{Ad i.:} This is e.g.\ \cite[Theorem 2.31]{keller2021graphs}.
    \newline 
    \textit{Ad ii.:} This is e.g.\ \cite[Corollary 5.6]{haeseler2012laplacians} together with \cite[Corollary 6.3 a)]{haeseler2012laplacians}, i.e.\, a consequence of the non-explosion in Lemma~\ref{lemma:non-explosive}. 
    \newline 
    \textit{Ad iii.:} This is e.g.\ \cite[Theorem 1.12]{keller2021graphs}.
\end{proof}

\subsection{The heat semigroup on \(\mathcal{S}\)}

As discussed in the previous section, the heat semigroup \((P_t)_{t \geq 0}\) has very good mapping properties on \(\ell^2\). For our purposes, we need to extend the action of \((P_t)_{t \geq 0}\) to the space of tempered functions \(\mathcal{S}'\). To do this, we will first study how \((P_t)_{t \geq 0}\) acts on \(\mathcal{S}\), and in particular how it \textit{quantitatively} acts on each \(H_{N}\), \(N \geq 0\).

\subsubsection{Schwartz-seminorm bounds}\label{subsection:H_N_norm_bounds}

Let us first discuss the following very important bound on the time-evolution of \(H_{N}\)-norms under \((P_t)_{t \geq 0}\). It will allow us to conclude that the variable-speed random walk semigroup \((P_t)_{t \geq 0}\) maps \(\mathcal{S}\) into \(\mathcal{S}\) continuously in the topology of \(\mathcal{S}\).
Observe that very naive bounds where we would pretend that the random walk moves at linear speed in the worst case direction cannot achieve this for unbounded weights and we would have to use to some extent the reversibility of the random walk to even see non-explosion if the inverses of the edge weights are summable.

\begin{lemma}[\(H_N\)-norm under the heat semigroup]\label{lemma:semigroup_norm_bound_schwartz}
    Let Assumption~\ref{assumption:at_most_quadratically_growing} hold, so that
    \begin{align}
        K_N := \sup_x \,\, \langle x \rangle^{-2N} \sum_{y \sim x} \omega(x, y) \vert \langle x\rangle^N - \langle y \rangle^N \vert^2
    \end{align} is finite for all \(N \geq 0\).
    Then, we have, for all \(N \geq 0\), the bound
    \begin{align}\label{eq:semigroup_norm_bound_schwartz}
        \Vert P_t u\Vert_{H_{N}}
        \leq \e^{K_N t / 2} \Vert u\Vert_{H_{N}}.
    \end{align}
    In particular we have, for all \(N \geq 0\), the pointwise bound
    \begin{align}\label{eq:pointwise_control_heat_kernel}
        \sum_{y} \langle y \rangle^{2N} p_t(x, y)^2
        \leq \e^{K_N t} \langle x \rangle^{2N}.
    \end{align}
\end{lemma}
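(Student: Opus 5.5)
The natural route is an energy (Grönwall) estimate for the weighted $\ell^2$-norm along the heat flow. Set $w(x) := \langle x\rangle^{N}$ and, for $u \in C_c$ first, $F(t) := \Vert P_t u\Vert_{H_N}^2 = \sum_x w(x)^2 (P_t u)(x)^2$. Formally $\partial_t P_t u = -\Delta P_t u$, so with $f = P_t u$
\begin{align*}
    F'(t) = -2\sum_x w(x)^2 f(x) (\Delta f)(x) = -\sum_{x,y}\omega(x,y)\,[f(x)-f(y)]\,[w(x)^2 f(x) - w(y)^2 f(y)],
\end{align*}
after symmetrising the double sum.

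The key algebraic step is the identity
\begin{align*}
    [f(x)-f(y)][w(x)^2f(x)-w(y)^2f(y)] = [w(x)f(x)-w(y)f(y)]^2 - [w(x)-w(y)]^2 f(x)f(y).
\end{align*}
Dropping the nonnegative Dirichlet-energy term of $wf$ gives
\begin{align*}
    F'(t) \le \sum_{x,y}\omega(x,y)[w(x)-w(y)]^2 f(x)f(y).
\end{align*}
Next apply $|f(x)f(y)|\le \tfrac12(f(x)^2+f(y)^2)$ and use the symmetry of $\omega$:
\begin{align*}
    F'(t) \le \sum_x f(x)^2\sum_{y\sim x}\omega(x,y)[w(x)-w(y)]^2 \le K_N \sum_x w(x)^2 f(x)^2 = K_N F(t).
\end{align*}
Grönwall then yields $F(t)\le \e^{K_N t}F(0)$, which is \eqref{eq:semigroup_norm_bound_schwartz}. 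Finiteness of $K_N$ follows from Assumption~\ref{assumption:at_most_quadratically_growing}: for $x\sim y$ the mean value theorem gives $|\langle x\rangle^N-\langle y\rangle^N|\lesssim_N \langle x\rangle^{N-1}$, so the bracket is $\lesssim \langle x\rangle^{2N-2}$ and the quadratic growth bound absorbs the remaining factor.

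The main obstacle is justifying the differentiation and the interchanges of sums, since $P_t u$ is not finitely supported and $\omega$ is unbounded. To handle this I would work with the truncated weight $w_R := \langle x\rangle^N\wedge R$ (bounded, still Lipschitz-controlled, with the same constant bound $\sup_x w_R(x)^{-2}\sum_y\omega(x,y)|w_R(x)-w_R(y)|^2\le K_N$, because truncation only shrinks differences while the ratio stays controlled; this needs a small check). Alternatively one could use the Dirichlet semigroup on finite balls $\Lambda_n$, which increase to the full semigroup. On a finite ball everything is a finite linear ODE, so the computation above is rigorous, and the Dirichlet boundary only helps. One then obtains $\Vert P^{(n)}_t u\Vert_{H_N}\le \e^{K_Nt/2}\Vert u\Vert_{H_N}$ uniformly in $n$. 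Since $p^{(n)}_t(x,y)\uparrow p_t(x,y)$ (monotone convergence of killed walks, with Lemma~\ref{lemma:the_semigroup_on_l2} identifying the limit with the $\ell^2$ semigroup), applying this to $u\ge0$ and using monotone convergence or Fatou passes the bound to $P_t$. For general $u$, split into positive and negative parts, or simply apply the result to $|u|$ and use $|P_tu|\le P_t|u|$. Density of $C_c$ in $H_N$ then extends the bound to all $u\in H_N$.

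Finally, the pointwise bound \eqref{eq:pointwise_control_heat_kernel} follows by taking $u=\delta_x$. Using symmetry $p_t(y,x)=p_t(x,y)$ (reversibility with respect to counting measure), $(P_t\delta_x)(y)=p_t(y,x)=p_t(x,y)$, and squaring the norm bound yields $\sum_y\langle y\rangle^{2N}p_t(x,y)^2\le \e^{K_Nt}\langle x\rangle^{2N}$.
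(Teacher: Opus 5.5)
Your proposal is correct and follows essentially the paper's route. The paper also runs a Gr\"onwall estimate for $\Vert P_t^\Lambda u\Vert_{H_N}^2$ with the killed finite-volume semigroup, where your algebraic identity is just a hand proof of the Caccioppoli inequality it cites from Keller--Lenz--Wojciechowski, and then lets $\Lambda\uparrow\Cluster$ via Fatou (you use monotonicity of the killed kernels, the paper uses $\ell^2$-convergence $P_t^\Lambda u\to P_t u$), with the pointwise bound obtained from $u=\mathbf{1}_x$ and the symmetry of $p_t$, exactly as you do.
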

\begin{proof}
    Let \(u \in H_{N}\). We will work in a finite volume \(\Lambda \Subset \Cluster\) first and then take a limit. Consider the heat semigroup \((P_t^\Lambda)_{t \geq 0}\) on \(\Lambda\) with killing at the boundary, i.e.\, \((P_t^\Lambda)_{t \geq 0}\) is the semigroup associated with the variable-speed random walk inside \(\Lambda\), killed on leaving \(\Lambda\), and has the generator \(-\Delta_\Lambda\) given by
    \begin{align*}
        (\Delta_\Lambda f)(x)
        = \Delta(i_\Lambda f)
    \end{align*} for \(f \colon \Lambda \to \mathbb{R}\) and with \(i_\Lambda f\) being the extension of \(f\) to a function on \(\mathcal{C}\) by setting it to zero outside \(\Lambda\). We will identify \(P_t^\Lambda u\) and \(i_\Lambda P_t^\Lambda u\) for this proof.

    By \cite[Lemma 1.21]{keller2021graphs} we have \(P_t^{\Lambda} u \xrightarrow[\Lambda \uparrow \Cluster]{} P_t u\) in \(\ell^2\), hence also pointwise, giving via Fatou's Lemma \(\Vert P_t u \Vert_{H_N}^2 \leq \sup_{\Lambda \Subset \C} \Vert P_t^{\Lambda} u \Vert_{H_N}^2\). We see that it suffices to uniformly upper-bound the finite-volume norms \( \Vert P_t^{\Lambda} u \Vert_{H_N}^2\) by \(\e^{K_N t } \Vert u\Vert_{H_{N}}^2\).

    For that, fix  \(\Lambda \Subset \Cluster\) and let
    \begin{align*}
        U(t) = \Vert P_t^{\Lambda} u \Vert_{H_N}^2
        = \sum_{x} \jap{x}^{2N} \vert P_t^{\Lambda} u (x)\vert^2, \quad t \geq 0.
    \end{align*}
    As we have a \(C_0\)-semigroup on our hands, it comes naturally to differentiate this function in time \(t\) and then try to bound the derivative.
    Computing the time derivative, we get
    \begin{align}\label{equation:time_derivative_of_local_H_N_norm}
        \frac{\mathrm{d}}{\mathrm{d} t} U(t)
        = \sum_{x} \jap{x}^{2N} [ -2 (\Delta_\Lambda P_t^{\Lambda} u)(x) \, (P_t^{\Lambda}u)(x) ] 
        = -2 \mathcal{E}(P_t^{\Lambda}u, \jap{\cdot}^{2N} P_t^{\Lambda} u)
    \end{align} by discrete Gauss-Green.
    We will control the r.h.s. by the Caccioppoli inequality \cite[Theorem 12.4]{keller2021graphs} which here reads
    \begin{align*}
        \mathcal{E}(f, g^2 f)
        \geq -\frac{1}{2} \sum_{x} f(x)^2 \sum_{y \sim x} \omega(x, y) \vert g(x) - g(y)\vert^2
    \end{align*} for all finitely supported \(f\) and all functions \(g\).
    Indeed, the r.h.s. of \eqref{equation:time_derivative_of_local_H_N_norm} is bounded by
    \begin{align*}
        -2\mathcal{E}(P_t^{\Lambda}u, \jap{\cdot}^{2N} P_t^{\Lambda} u)
        &\leq \sum_{x} \vert P_t^{\Lambda} u (x) \vert^2 \sum_{y \sim x} \omega(x, y) \vert \jap{x}^N - \jap{y}^N\vert^2 \\
        &\leq K_N U(t). 
    \end{align*}
    Hence, Gronwall's Lemma gives
    \begin{align*}
        \Vert P_t^{\Lambda} u \Vert_{H_N}^2 = U(t)
        \leq \e^{K_N t}  U(0) 
        =  \e^{K_N t}  \Vert u_{\vert\Lambda} \Vert_{H_N}^2 
        \leq \e^{K_N t}  \Vert u\Vert_{H_N}^2 
        , \quad t \geq 0.
    \end{align*}
\end{proof}

As a modest application of Lemma~\ref{lemma:semigroup_norm_bound_schwartz} we now give an alternative proof of Lemma~\ref{lemma:non-explosive}, which states that the VSRW is indeed non-explosive for at most quadratically growing weights.
\begin{proof}[Alternative proof of Lemma~\ref{lemma:non-explosive}]
    Suppose we don't know yet that \(\mathbf{P}_x[\zeta = \infty] = 1\), so that a priori \(p_t(x,y) = \mathbf{P}_x^\omega[X_t = y, \zeta > t]\). In this situation, Definition~\ref{definition:dirichlet_laplacian_l2_semigroup} and Lemma~\ref{lemma:the_semigroup_on_l2} (i) and (iii) still make sense, so that we do not argue in a circular fashion.
    For fixed \(x \in \Cluster\), we set
    \begin{align*}
        M(t)
        := \sum_{y \in \Cluster} p_t(x, y)
        = \sum_{y \in \Cluster} (P_t \mathbf{1}_x) (y).
    \end{align*}
    Then, \(M(0) = 1\) and
    \begin{align}\label{eq:derivative_sum_of_transition_probabilities}
        \frac{\dd}{\dd t} M(t)
        = \sum_{y \in \Cluster} (-\Delta P_t \mathbf{1}_x)(y)
        = 0,
    \end{align} and hence
    \begin{align*}
        \sum_{y \in \Cluster} p_t(x, y) = 1.
    \end{align*}
    We can justify both equalities in \eqref{eq:derivative_sum_of_transition_probabilities} by noting that \(\mathbf{1}_x \in \mathcal{S}\) and the facts that the convergence \(\lim_{h \to 0} \frac{P_{t+h} u - P_t u}{h} = -\Delta P_t u\) happens in the topology of \(\mathcal{S}\) by Lemma~\ref{lemma:the_semigroup_on_schwartz_space} and that  \(\sum_y \Delta u(y) = 0\) for \(u \in \mathcal{S}\) by interchangeability of the appearing sums. The time-derivative in \(\mathcal{S}\) was in turn justified using Lemma~\ref{lemma:semigroup_norm_bound_schwartz}.
\end{proof}

\subsubsection{Mapping properties on $H_N$}

\begin{lemma}[The semigroup on \(H_N\)]\label{lemma:the_semigroup_on_H_N}
    Let Assumption~\ref{assumption:at_most_quadratically_growing} hold and fix \(N \geq 0\).
    Then, 
    \begin{enumerate}[i.]
        \item \(P_t\) maps \(H_N\) continuously into \(H_N\). More precisely \eqref{eq:semigroup_norm_bound_schwartz} holds.

        \item \((P_t)_{t \geq 0}\) is a \(C_0\)-semigroup on \(H_N\).

        \item The action of the generator of \((P_t)_{t \geq 0}\) on its domain is given by the formal Laplacian \(-\Delta\).

        \item The formal Laplacian \(\Delta\) maps \(H_{N+2}\) continuously (bounded) into \(H_{N}\) for all \(N \geq 0\).

        \item It is \(H_{N+2} \subseteq \mathrm{dom}_{H_N}(\Delta)\), i.e.\,
        \begin{align*}
            \frac{\dd}{\dd t} P_t u
            = \lim_{h \downarrow 0} \frac{P_{t+h}u - P_t u}{h}
                = (-\Delta) P_t u
                = P_t (-\Delta) u
        \end{align*} with convergence in \(H_{N}\) for all \(u \in H_{N+2}\).
    \end{enumerate}
\end{lemma}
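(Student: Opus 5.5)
We prove the five items in the order (i), (iv), (ii), (v), (iii), using Lemma~\ref{lemma:semigroup_norm_bound_schwartz} and Lemma~\ref{lemma:the_semigroup_on_l2}.

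\emph{Item (i).} Since $\ell^2 \supseteq H_N$ and $P_t$ is already defined on $\ell^2$, the bound \eqref{eq:semigroup_norm_bound_schwartz} gives (i) immediately: $P_t$ maps $H_N$ boundedly into itself with norm at most $\e^{K_N t/2}$.

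\emph{Item (iv).} Write $\pi(x) = \sum_{y\sim x}\omega(x,y) \le C\jap{x}^2$ by Assumption~\ref{assumption:at_most_quadratically_growing}. Then
\[
\abs{\Delta u(x)} \le \pi(x)\abs{u(x)} + \sum_{y\sim x}\omega(x,y)\abs{u(y)}.
\]
Neighbours satisfy $\jap{x}\asymp\jap{y}$ with constants depending only on $d$. Also $\omega(x,y)\le \min(\pi(x),\pi(y)) \le C\jap{y}^2$. Hence
\[
\jap{x}^{N}\abs{\Delta u(x)} \lesssim \jap{x}^{N+2}\abs{u(x)} + \sum_{y\sim x}\jap{y}^{N+2}\abs{u(y)}.
\]
Squaring, using Cauchy--Schwarz over the $2d$ neighbours and summing over $x$ gives $\norm{\Delta u}_{H_N}\lesssim \norm{u}_{H_{N+2}}$.

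\emph{Item (ii).} By (i), the operators $P_t$ are uniformly bounded on $H_N$ for $t\in[0,1]$. So strong continuity at $0$ only needs to be checked on a dense set, and we use $C_c$, which is dense in $H_N$. For $u\in C_c$, I would show $P_t u\to u$ in $H_N$ as follows:
\begin{itemize}
\item Convergence holds in $\ell^2$ by Lemma~\ref{lemma:the_semigroup_on_l2}.
\item The weighted tails are uniformly small: $\sum_{\abs{y}>R}\jap{y}^{2N}\abs{P_tu(y)}^2 \le \jap{R}^{-2}\norm{P_tu}_{H_{N+1}}^2 \le \jap{R}^{-2}\e^{K_{N+1}}\norm{u}_{H_{N+1}}^2$.
\end{itemize}
Combining the two gives convergence in $H_N$.

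\emph{Item (v).} For $u\in H_{N+2}$ we use the Duhamel identity
\[
P_h u - u = -\int_0^h P_s\Delta u \, \dd s.
\]
\begin{itemize}
\item First verify it for $u\in C_c$. Pointwise it follows from the Kolmogorov backward equation; in fact it holds in $\ell^2$, since $C_c\subseteq\mathrm{dom}(\Delta^{(D)})$ and $\Delta^{(D)}$ acts as $\Delta$ by Lemma~\ref{lemma:the_semigroup_on_l2}(iii).
\item Extend to $u\in H_{N+2}$ by density of $C_c$ in $H_{N+2}$. Use continuity of $\Delta:H_{N+2}\to H_N$ from (iv) and the bounds from (i) on both sides; the identity then holds as an identity in $H_N$.
\item Divide by $h$. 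Strong continuity from (ii), applied to $\Delta u\in H_N$, gives $(P_hu-u)/h\to-\Delta u$ in $H_N$.
\item For general $t$, (i) gives $P_tu\in H_{N+2}$. Then $(P_{t+h}u-P_tu)/h = P_t\big((P_hu-u)/h\big)\to -P_t\Delta u$, and the same argument applied to $P_tu$ gives the limit $-\Delta P_t u$. So both expressions in (v) coincide.
\end{itemize}

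\emph{Item (iii).} Let $A$ be the generator on $H_N$; we must show $A u = -\Delta u$ pointwise for $u\in\mathrm{dom}_{H_N}(A)$. $H_N$-convergence implies pointwise convergence. For fixed $x$, $(P_hu)(x)=\sum_y p_h(x,y)u(y)$ is a finite sum for $u \in C_c$. By duality with $\mathbf{1}_x \in \mathcal{S}$, $(P_h u)(x) = \langle P_h\mathbf{1}_x, u\rangle$, and (v) applied to $\mathbf{1}_x$ gives
\[
\frac{\dd}{\dd h}\langle P_h\mathbf{1}_x,u\rangle\Big|_{h=0} = -\langle\Delta\mathbf{1}_x,u\rangle = -\Delta u(x).
\]
The last equality is a finite sum, using the symmetry of $\omega$.

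The main obstacle is this pairing step. It needs $\mathbf{1}_x\in H_{M+2}$ for every $M$, with $(P_h\mathbf{1}_x - \mathbf{1}_x)/h \to -\Delta\mathbf{1}_x$ in $H_M$ for $M$ large enough that this pairs continuously with $u\in H_N\subseteq H_{-M}$. Since $\mathbf{1}_x\in\mathcal{S}$, this holds. We also need symmetry of $p_h$, i.e.\ $\langle P_h f, g\rangle = \langle f, P_h g\rangle$ for $f\in\mathcal{S}$ and $g\in H_N$. This holds by symmetry of $p_h$ on $\ell^2$ and extends to $g\in H_N$ via the pointwise bound \eqref{eq:pointwise_control_heat_kernel}, which makes $\sum_y p_h(x,y)u(y)$ absolutely convergent.
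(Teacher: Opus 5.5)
Your proposal is correct. Items (i), (ii) and (iv) are essentially the paper's argument. In (ii) you get tightness from the single extra weight $\Vert P_tu\Vert_{H_{N+1}}$, whereas the paper uses dominated convergence with the majorant $\jap{x}^{-2(M-N)}$, $M>N+d/2$; the mechanism is the same.

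The genuine differences are in (v) and (iii).

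In (v), the paper argues as follows:
\begin{itemize}
\item it first places $u\in H_{N+2}$ in $\mathrm{dom}_{\ell^2}(\Delta^{(D)})$, using closedness of the generator and truncation;
\item it then writes the Duhamel formula with $\Delta P_su$ inside the integral;
\item it concludes with Minkowski, $\Vert\Delta\Vert_{H_{N+2}\to H_N}$ and strong continuity on $H_{N+2}$.
\end{itemize}
You instead put the semigroup outside, $P_hu-u=-\int_0^hP_s\Delta u\,\dd s$, prove this on $C_c$ and extend by density. You therefore only need strong continuity on $H_N$ at the one vector $\Delta u$, and you derive $P_t\Delta u=\Delta P_tu$ explicitly.

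In (iii), the paper uses $\Vert\cdot\Vert_{\ell^2}\le\Vert\cdot\Vert_{H_N}$ to see that the $H_N$-generator is a restriction of $-\Delta^{(D)}$. It then invokes Lemma~\ref{lemma:the_semigroup_on_l2}(iii), i.e.\ that $\Delta^{(D)}$ acts as the formal Laplacian on its whole domain. You instead pair with $\mathbf{1}_x$. This actually shows that $h\mapsto(P_hu)(x)$ has derivative $-\Delta u(x)$ at $h=0$ for every $u\in\ell^2$; it is the same duality trick the paper later uses on $\mathcal{S}'$ in Lemma~\ref{lemma:mapping-properties-tempered-distributions}(iii).

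So your route needs from the $\ell^2$ theory only that $C_c\subseteq\mathrm{dom}(\Delta^{(D)})$ with $\Delta^{(D)}=\Delta$ there, at the price of making (iii) depend on (v). The paper's route is shorter but leans on the full-domain identification.

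Two cosmetic points:
\begin{itemize}
\item The remark in (iii) that $\sum_yp_h(x,y)u(y)$ is a finite sum for $u\in C_c$ plays no role, since your $u$ is a general domain element.
\item The ``main obstacle'' is not one. Since $H_N\subseteq\ell^2$, applying (v) with $N=0$ to $\mathbf{1}_x\in H_2$ already gives the needed $\ell^2$-convergence. The identity $(P_hu)(x)=\langle P_h\mathbf{1}_x,u\rangle$ is just symmetry of $p_h$ on $\ell^2$, so there is nothing to extend.
\end{itemize}
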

\begin{proof}
        \textit{Ad i.:} This is proven in Lemma~\ref{lemma:semigroup_norm_bound_schwartz}.
         \newline
        \textit{Ad ii.:} We only have to show the strong continuity, i.e.\, that the map \(t \mapsto P_t u\) is continuous with respect to the \(H_N\)-norm for \(u \in H_{N}\). Given \(\epsilon > 0\), there is \(u_n \in C_c\) such that \(\Vert u - u_n\Vert_{H_N} < \epsilon\). Then,
        \begin{align*}
            \Vert P_t u - u\Vert_{H_N}
            \leq \Vert P_t u_n - u_n\Vert_{H_N} + (1 + \Vert P_t \Vert_{H_N \to H_N}) \epsilon.
        \end{align*} Now we only have to first let \(t \downarrow 0\) and then \(\epsilon \downarrow 0\), because for \(v \in C_c\) it holds
        \begin{align*}
            \Vert P_t v - v\Vert_{H_N}^2
            = \sum_{x} \jap{x}^{2N} \vert (P_t v)(x) - v(x) \vert^2
            \xrightarrow[t \downarrow 0]{} 0. 
        \end{align*}
        Indeed, \(P_t v \xrightarrow[t \downarrow 0]{} v\) holds pointwise by the strong continuity of the \(\ell^2\)-semigroup and we additionally have the majorization
        \begin{align*}
            \vert P_t v(x) \vert^2
            \leq \Vert P_t v \Vert_{H_{M}}^2 \jap{x}^{-2M}
            \leq \e^{K_M t} \Vert v \Vert_{H_{M}}^2 \jap{x}^{-2M}.
        \end{align*}
        Therefore,
        \begin{align*}
            \jap{x}^{2N}\vert P_t v(x)-v(x)\vert^2 &\leq 2\e^{K_M}\lVert v\rVert_{H_M}^2\jap{x}^{-2(M-N)}+2\jap{x}^{2N}\vert v(x)\vert^2.
        \end{align*}
        Now choose $M>N+d/2$, then for $v\in C_c$
        \begin{align*}
            \sum_{x\in\Cluster}\left(\jap{x}^{-2(M-N)}+\jap{x}^{2N}\vert v(x)\vert^2\right)<\infty.
        \end{align*}
        Hence, dominated convergence gives
        \begin{align*}
            \lVert P_t v-v\rVert_{H_N}\xrightarrow[t\downarrow 0]{}0,
        \end{align*}
        as claimed above. 
         \newline
        \textit{Ad iii.:} Let \(A\) be the generator of \((P_t)_{t \geq 0}\) on \(H_N\) and \(u \in \mathrm{dom}(A)\). Then, \(\frac{P_t u - u}{t} \xrightarrow[t \downarrow 0]{} Au\) in \(H_N\), hence in \(\ell^2\) and \(A u = -\Delta^{(D)} u = -\Delta u\) by Lemma~\ref{lemma:the_semigroup_on_l2}.
         \newline
        \textit{Ad iv.:} This is immediate by the  Assumption~\ref{assumption:at_most_quadratically_growing} and the weights being nearest-neighbours.
         \newline 
        \textit{Ad v.:} Let us first argue for the pointwise limit, and then upgrade to \(H_N\)-convergence. Let \(u \in H_{N+2}\). We already know that \(C_c \subseteq \mathrm{dom}_{\ell^2}(\Delta^{(D)})\) and \(-\Delta^{(D)}\) is a closed operator as the generator of a \(C_0\)-semigroup. Hence it suffices to see that with \(u_n := u \mathbf{1}_{\jap{\cdot} \leq n}\) we have \(u_n \xrightarrow[n \to \infty]{} u\) in \(H_{N+2}\), hence also in \(\ell^2\), and also \(\Delta^{(D)} u_n = \Delta u_n \xrightarrow[n \to \infty]{} \Delta u\) in \(H_{N}\), hence in \(\ell^2\), by item $iv.$ It follows that \(u \in \mathrm{dom}_{\ell^2}(\Delta^{(D)})\), so in particular we get the pointwise identity
        \begin{align*}
            \frac{(P_t u)(x) - u(x)}{t}
            = \frac{1}{t} \int_0^t \, (-\Delta P_s u)(x) \, \dd s.
        \end{align*}
        Applying Minkowski's integral inequality, we get
        \begin{align*}
            &\Big\Vert \frac{P_t u - u}{t} - (-\Delta) u \Big\Vert_{H_{N}}
            = \Big\Vert -\frac{1}{t} \int_0^t \, [\Delta P_s u - \Delta u] \, \dd s\Big\Vert_{H_{N}}
            \leq \frac{1}{t} \int_0^t \, \Vert \Delta (P_s u - u) \Vert_{H_N} \, \dd s \\
            &\leq \Vert \Delta\Vert_{H_{N+2} \to H_N} \frac{1}{t} \int_0^t \, \Vert  P_s u - u \Vert_{H_{N+2}} \, \dd s 
            \xrightarrow[t \downarrow 0]{} 0,
        \end{align*} where the convergence to zero of the r.h.s. is by strong continuity.
\end{proof}

Let us now combine this to collect all the desired properties of the semigroup $(P_t)_{t\geq 0}$ acting on $\mathcal{S}$.

\begin{lemma}[The semigroup on \(\mathcal{S}\)]\label{lemma:the_semigroup_on_schwartz_space}
    Under Assumption~\ref{assumption:at_most_quadratically_growing}, we have
    \begin{enumerate}[i.]
        \item For all \(t \geq 0\), the maps \(P_t \colon \mathcal{S} \to \mathcal{S}\) are continuous in the topology of \(\mathcal{S}\).

        \item For all \(u \in \mathcal{S}\), the map \(t \mapsto P_t u\) is continuous with respect to the topology of \(\mathcal{S}\).

        \item For \(u \in \mathcal{S}\), it is
            \begin{align*}
                \frac{\dd}{\dd t} P_t u
                = \lim_{h \downarrow 0} \frac{P_{t+h}u - P_t u}{h}
                = (-\Delta) P_t u
                = P_t (-\Delta) u
            \end{align*} with convergence in \(\mathcal{S}\).
    \end{enumerate}
\end{lemma}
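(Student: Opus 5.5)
The plan is to derive all three items from the quantitative $H_N$-statements in Lemma~\ref{lemma:the_semigroup_on_H_N} and Lemma~\ref{lemma:semigroup_norm_bound_schwartz}. This works because the topology of $\mathcal{S}$ is, by definition, generated by the family of norms $\Vert\cdot\Vert_{H_N}$, $N\geq 0$. Every statement about $\mathcal{S}$ therefore reduces to the corresponding statement in each $H_N$ separately, with constants allowed to depend on $N$.

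For item i., let $u\in\mathcal{S}$. By \eqref{eq:semigroup_norm_bound_schwartz}, $\Vert P_t u\Vert_{H_N}\leq \e^{K_N t/2}\Vert u\Vert_{H_N}<\infty$ for every $N$, so $P_t u\in\bigcap_N H_N=\mathcal{S}$. Since $P_t$ is linear, the same bound applied to $u-u_n$ gives continuity: if $u_n\to u$ in $\mathcal{S}$, then $\Vert P_tu_n-P_tu\Vert_{H_N}\leq \e^{K_Nt/2}\Vert u_n-u\Vert_{H_N}\to0$ for every $N$.

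One point to check is that the operators $P_t$ on the various $H_N$ are consistent, so that "$P_t u$" is unambiguous. This holds because all of them are restrictions of the $\ell^2$-semigroup, given pointwise by the kernel $p_t$ via Lemma~\ref{lemma:the_semigroup_on_l2} and the finite-volume approximation used in Lemma~\ref{lemma:semigroup_norm_bound_schwartz}.

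For item ii., fix $u\in\mathcal{S}$. For each $N$, $u\in H_N$, and Lemma~\ref{lemma:the_semigroup_on_H_N} ii.\ states that $(P_t)_{t\geq 0}$ is a $C_0$-semigroup on $H_N$. Hence $t\mapsto P_tu$ is continuous into $H_N$: right-continuity at $t$ follows from $\Vert P_{t+h}u-P_tu\Vert_{H_N}\leq \e^{K_Nt/2}\Vert P_hu-u\Vert_{H_N}$, and left-continuity from the usual $C_0$-semigroup argument. As this holds for every $N$, the map is continuous in $\mathcal{S}$.

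For item iii., fix $N$. Since $u\in H_{N+2}$, Lemma~\ref{lemma:the_semigroup_on_H_N} v.\ gives $h^{-1}(P_{t+h}u-P_tu)\to -\Delta P_tu=-P_t\Delta u$ in $H_N$. Both limits are computed in $H_N\subseteq\ell^2$ and therefore agree pointwise, so they are the same element for every $N$. This also shows that $-\Delta P_t u = -P_t\Delta u$ lies in $\mathcal{S}$; independently, Lemma~\ref{lemma:the_semigroup_on_H_N} iv.\ gives $\Delta u\in\mathcal{S}$, so item i.\ yields $P_t\Delta u\in\mathcal{S}$. Since $N$ was arbitrary, the convergence holds in $\mathcal{S}$.

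The only mildly delicate point in the whole argument is the consistency of the different realisations of $P_t$ noted under item i.; everything else is bookkeeping over $N$.
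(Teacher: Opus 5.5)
Your proposal is correct and takes the same approach as the paper, which derives all three items from Lemma~\ref{lemma:the_semigroup_on_H_N} using that convergence in \(\mathcal{S}\) means convergence in every \(H_N\)-norm. Your remarks on the consistency of the realisations of \(P_t\) across the \(H_N\), and on identifying the limits for different \(N\), only spell out what the paper leaves implicit.
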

\begin{proof}
    All of the assertions follow from Lemma~\ref{lemma:the_semigroup_on_H_N} by recalling that convergence in \(\mathcal{S}\) is equivalent to convergence in all \(H_N\)-norms.
\end{proof}

\subsection{The heat semigroup on \(\mathcal{S}'\)}

The continuity of \(P_t \colon \mathcal{S} \to \mathcal{S}\) allows us to extend the semigroup to \(\mathcal{S}'\), a priori by duality, but we can also obtain a pointwise expression. The properties of the semigroup acting on $\mathcal{S}'(\Z^d)$ are collected in the following lemma. 
\begin{lemma}[The semigroup on \(\mathcal{S}'\)]\label{lemma:mapping-properties-tempered-distributions}
     Under Assumption~\ref{assumption:at_most_quadratically_growing}, we have:
    \begin{enumerate}[i.]
        \item For \(\xi\in \mathcal{S}'\), let \(P_t \xi\) be defined via \(\langle \cdot, \cdot\rangle_{\mathcal{S}', \mathcal{S}}\)-duality. Then
        \begin{align}
            (P_t \xi)(x)
            = \sum_y \xi(y) p_t(x, y)
        \end{align} pointwise, and the sum converges absolutely, with
         \begin{align}\label{eq:semigroup_pointwise_bound_tempered}
            \vert (P_t \xi)(x) \vert
            \leq \e^{K_N t /2} \langle x \rangle^{N} \Vert \xi \Vert_{H_{-N}}.
        \end{align} Even better,
        \begin{align}\label{eq:semigroup_norm_bound_tempered}
            \Vert P_t \xi \Vert_{H_{-N}}
            \leq e^{K_N t/2} \Vert \xi \Vert_{H_{-N}}
        \end{align} for all \(N \in \mathbb{N}_0\).
        
        \item For \(f \in \mathcal{S}'\), let \(g =\Delta f\) be defined via \(\langle \cdot, \cdot\rangle_{\mathcal{S}', \mathcal{S}}\)-duality.
        Then 
        \begin{align*}
            g(x) = (\Delta f)(x)
        \end{align*} pointwise, where the r.h.s. means formal Laplacian.

        \item We have, for \(\xi \in \mathcal{S}'\), \(\frac{P_t \xi - \xi}{t} \xrightarrow[t \downarrow 0]{} -\Delta \xi\) in \(\mathcal{S}'\), and in particular pointwise.
    \end{enumerate}
\end{lemma}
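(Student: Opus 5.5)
The plan is to work entirely by duality from the results on $\mathcal{S}$, namely Lemma~\ref{lemma:semigroup_norm_bound_schwartz} and Lemma~\ref{lemma:the_semigroup_on_schwartz_space}, using the symmetry $p_t(x,y)=p_t(y,x)$ of the heat kernel, which holds because the VSRW is reversible with respect to counting measure.

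\emph{Item i.} For $\xi\in\mathcal{S}'$ we define $\langle P_t\xi,u\rangle:=\langle \xi,P_tu\rangle$ for $u\in\mathcal{S}$. Fix $N$ with $\xi\in H_{-N}$. Cauchy--Schwarz in the weighted space together with the pointwise bound \eqref{eq:pointwise_control_heat_kernel} gives
\[
\sum_y|\xi(y)|\,p_t(x,y)\le \Big(\sum_y\jap{y}^{-2N}|\xi(y)|^2\Big)^{1/2}\Big(\sum_y\jap{y}^{2N}p_t(x,y)^2\Big)^{1/2}\le \e^{K_Nt/2}\jap{x}^N\Vert\xi\Vert_{H_{-N}}.
\]
This shows that the sum converges absolutely and yields \eqref{eq:semigroup_pointwise_bound_tempered}. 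Next we take $u=\mathbf 1_x\in\mathcal{S}$ in the duality and use symmetry, $P_t\mathbf 1_x(y)=p_t(y,x)=p_t(x,y)$. This gives $\langle P_t\xi,\mathbf 1_x\rangle=\sum_y\xi(y)p_t(x,y)$, which is the pointwise formula. The dual object is a genuine function in $\mathcal{S}'$, since it is determined by its values on the indicators $\mathbf 1_x$, and $\mathcal{S}'$ is identified with polynomially growing functions. For \eqref{eq:semigroup_norm_bound_tempered} we use \eqref{equation:H_minus_N_norm_by_duality}: for every $u\in\mathcal{S}$ one has $|\langle\xi,P_tu\rangle|\le\Vert\xi\Vert_{H_{-N}}\Vert P_tu\Vert_{H_N}\le \e^{K_Nt/2}\Vert\xi\Vert_{H_{-N}}\Vert u\Vert_{H_N}$ by Lemma~\ref{lemma:semigroup_norm_bound_schwartz}, and we take the supremum over $u$. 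The one point to check is that the Fubini interchange $\sum_x u(x)\sum_y\xi(y)p_t(x,y)=\sum_y\xi(y)(P_tu)(y)$ is legitimate. This follows from absolute convergence, obtained by bounding $\sum_{x,y}|u(x)||\xi(y)|p_t(x,y)$ with the same Cauchy--Schwarz estimate and the fact that $\sum_x|u(x)|\jap{x}^N<\infty$.

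\emph{Item ii.} Since $\Delta$ is symmetric with nearest-neighbour range, for $u\in\mathcal{S}$ and $f\in\mathcal{S}'$ we have $\sum_x f(x)(\Delta u)(x)=\sum_x(\Delta f)(x)u(x)$. Both double sums converge absolutely, because $\pi(x)\lesssim\jap{x}^2$ by Assumption~\ref{assumption:at_most_quadratically_growing}, $f$ grows polynomially and $u$ decays faster than any polynomial. Testing this identity with $u=\mathbf 1_x$ gives the pointwise identification.

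\emph{Item iii.} For $u\in\mathcal{S}$ we write
\[
\Big\langle\frac{P_t\xi-\xi}{t},u\Big\rangle=\Big\langle\xi,\frac{P_tu-u}{t}\Big\rangle.
\]
By Lemma~\ref{lemma:the_semigroup_on_schwartz_space}\,iii., $(P_tu-u)/t\to-\Delta u$ in $\mathcal{S}$, in particular in $H_N$. Since $\xi\in H_{-N}$ acts continuously on $H_N$, the right-hand side converges to $\langle\xi,-\Delta u\rangle=\langle-\Delta\xi,u\rangle$ by item~ii. This gives weak-$*$ convergence in $\mathcal{S}'$, and pointwise convergence follows by taking $u=\mathbf 1_x$.

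If one wants norm convergence in $H_{-(N+2)}$ instead, the plan is to use the pointwise identity $(P_t\xi-\xi)(x)=-\int_0^t(\Delta P_s\xi)(x)\dd s$, obtained via duality and Lemma~\ref{lemma:the_semigroup_on_H_N}\,v. One then needs strong continuity of $s\mapsto P_s\xi$ in a slightly weaker space. This step is the main obstacle: $P_t$ on $H_{-N}$ is only the adjoint of a $C_0$-semigroup, so it is merely weak-$*$ continuous. I would handle this by approximating $\xi$ by cut-offs $\xi\mathbf 1_{\jap{\cdot}\le n}\in C_c$ and using the uniform bound \eqref{eq:semigroup_norm_bound_tempered}, in the same way as the proof of Lemma~\ref{lemma:the_semigroup_on_H_N}\,ii. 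However, the statement only asks for convergence in $\mathcal{S}'$ and pointwise, which the duality argument already gives.
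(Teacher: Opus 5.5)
Your proposal is correct and follows essentially the same route as the paper: Cauchy--Schwarz with \eqref{eq:pointwise_control_heat_kernel} and the symmetry of $p_t$ gives the pointwise formula and bound, and the duality formula \eqref{equation:H_minus_N_norm_by_duality} together with Lemma~\ref{lemma:semigroup_norm_bound_schwartz} gives the $H_{-N}$ bound. Items ii.\ and iii.\ then follow by duality and testing against $\mathbf{1}_x$; testing with $\mathbf{1}_x$ already in item i.\ just saves you the Fubini interchange the paper performs. The worry in your last paragraph is unfounded but irrelevant, since on the reflexive space $H_{-N}$ the adjoint of a $C_0$-semigroup is automatically strongly continuous, as used in Remark~\ref{remark:C0_semigroup_on_H_minus_N_and_analyticity}.
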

\begin{proof}
    \textit{Ad i.:} Let  \(\xi \in \mathcal{S}'\). First, \(\psi = \sum_y \xi(y) p_t(\cdot, y)\) indeed is a member of \(\mathcal{S}'\) by the bound \eqref{eq:semigroup_pointwise_bound_tempered}, which in turn follows directly from \eqref{eq:pointwise_control_heat_kernel} and the Cauchy--Schwarz inequality.
        Now, we have, for all \(u \in \mathcal{S}\), that
        \begin{align*}
            \langle P_t \xi, u \rangle_{\mathcal{S}', \mathcal{S}}
            &= \langle \xi, P_t u \rangle_{\mathcal{S}', \mathcal{S}}
            = \sum_{x} \xi(x) (P_t u)(x)
            = \sum_{x} \xi(x) \sum_{y} u(y) p_t(x, y) \\
            &= \sum_{y} u(y) \Big(\sum_{x} \xi(x) p_t(y, x)\Big)
            = \langle \psi, u \rangle_{\mathcal{S}', \mathcal{S}},
        \end{align*} where we could interchange the sums by \eqref{eq:pointwise_control_heat_kernel} and \(u \in \mathcal{S}\), \(\xi \in \mathcal{S}'\). Hence, \(P_t \xi = \psi = \sum_y \xi(y) p_t(\cdot,y)\).
        The bound \eqref{eq:semigroup_norm_bound_tempered} follows e.g.\ by duality \eqref{equation:H_minus_N_norm_by_duality} and the corresponding bound for Schwartz functions \eqref{eq:semigroup_norm_bound_schwartz}.
        \newline 
        \textit{Ad ii.:} Similar to $i.$, using Assumption~\ref{assumption:at_most_quadratically_growing} to interchange sums.
        \newline 
        \textit{Ad iii.:} The convergence in \(\mathcal{S}'\) follows by definition per duality and Lemma~\ref{lemma:the_semigroup_on_schwartz_space}, $iii.$. The pointwise convergence follows because \(\mathbf{1}_x \in \mathcal{S}\).
\end{proof}
\begin{remark}[Semigroup on \(H_{-N}\) and analyticity on all \(H_N\), \(N \in \Z\)]\label{remark:C0_semigroup_on_H_minus_N_and_analyticity}
    In fact, the semigroup \((P_t)_{t \geq 0}\) is also a \(C_0\)-semigroup on \(H_{-N}\) for \(N \geq 0\). This follows by duality from Lemma~\ref{lemma:the_semigroup_on_schwartz_space}.
    One can even prove that it is an analytic semigroup on all \(H_N\), \(N \in \Z\). This is not completely clear a priori since \(\Delta_{H_{N}}\), with  \(-\Delta_{H_{N}}\) the generator of \((P_t)_{t \geq 0}\)  on \(H_N\), is not non-negative self-adjoint as \(\Delta\) on \(\ell^2\) (or even normal) w.r.t.\ the inner product on \(H_N\).
    As this is not of great importance to us here, we only give a proof sketch (for \(N \in \Z\)):
    \begin{enumerate}
        \item Consider the conjugated semigroup \(\widetilde{P}_t = \jap{\cdot}^{N} P_t \jap{\cdot}^{-N}\) on \(\ell^2\), which essentially per definition has the generator \(-\widetilde{\Delta} = \jap{\cdot}^{N} (-\Delta_{H_N}) \jap{\cdot}^{-N}\) with \(\mathrm{dom}(\widetilde{\Delta}) = \jap{\cdot}^{N} (\mathrm{dom}_{H_N}(\Delta)) = \{u \in \ell^2 \,\colon\, \jap{\cdot}^{-N} u \in \mathrm{dom}_{H_N}(\Delta)\}\).
        
        \item One can show that the form \(\widetilde{\mathcal{E}}(f, g) = \mathcal{E}(\jap{\cdot}^{-N} f, \jap{\cdot}^{N} g) = \langle \jap{\cdot}^{N} \Delta \jap{\cdot}^{-N} f,  g \rangle_{\ell^2}\) on \(C_c\) has a continuous extension to \(\mathrm{dom}(\mathcal{E}^{(D)})\), which we will also denote by \(\widetilde{\mathcal{E}}\).

        \item Using elementary estimates and the assumption of at most quadratic growth \ref{assumption:at_most_quadratically_growing}, one can show that there is some \(c = c(N, K_N) > 0\)
        with
        \begin{align*}
            &\mathrm{Re}(\widetilde{\mathcal{E}}(f, f))
            \geq \mathcal{E}(f, f) - c \Vert f\Vert_{\ell^2}^2 \\
            &\abs{\mathrm{Im}(\widetilde{\mathcal{E}}(f, f)) } \leq (c/2)(\mathcal{E}(f, f) + \Vert f\Vert_{\ell^2}^2 ),
        \end{align*} and then further
        \(\vartheta = \vartheta(N, K_N) > 0\) such that the form \(\widetilde{\mathcal{E}}(\cdot, \cdot) + \vartheta \langle \cdot, \cdot\rangle_{\ell_2}\) is a closed sectorial form with an angle \(0 \leq \alpha = \alpha(N, K_N) < \frac{\pi}{2}\).
        
        \item By the representation theorem, there is a unique \(m\)-sectorial operator \(-A\) with domain \(\mathrm{dom}(A)\) associated with \(\widetilde{\mathcal{E}}\) that generates an analytic semigroup of angle \(\frac{\pi}{2} - \alpha\).
        
        \item Using resolvents, one can show that in fact \(A = \widetilde{\Delta} = \jap{\cdot}^{N} \Delta_{H_N} \jap{\cdot}^{-N}\).
        
         \item Conjugating back, \(P_t =  \jap{\cdot}^{-N} \widetilde{P}_t \jap{\cdot}^{N}\), we can transfer analyticity from \((\widetilde{P}_t)_{t \geq 0}\) on \(\ell^2\) to \((P_t)_{t \geq 0}\) on \(H_{N}\).
    \end{enumerate}
\end{remark}

\subsection{The stochastic convolution and well-posedness}

The results of the previous sections already tell us that the first term in \eqref{sde-formal-solution} is well-defined. 
Let us show now that also the noise term in \eqref{sde-formal-solution} is well-defined and actually behaves quite nicely. We sketch the main arguments needed for the proofs. Putting these two ingredients together yields Theorem~\ref{theorem:well-posed}. 

\begin{lemma}[Well-defined noise-term]\label{lemma:construction_of_nice_noise_term}
    Suppose \((\Omega, \mathcal{F}, \mathbb{P})\) is any probability space supporting independent Brownian motions \((B_t(x))_{x \in \Cluster, t \geq 0}\).
    The noise-term
    \begin{align*}
        X_t(x) := \sqrt{2}\sum_y \int_0^t \, p_{t-s}(x, y) \, \dd B_s(y)
    \end{align*} is well-defined, in particular, there is an \(N \geq 0\) such that there is a version of \(X\) as random continuous function
    \begin{align*}
        [0, \infty) \to H_{-N} \subseteq \mathcal{S}'.
    \end{align*}
    This version has the property that almost surely for all \(t \geq 0\) it holds that
    \begin{align}\label{equation:SDE_with_initial_config_0_integral_form}
       X_t = \int_0^t \, (-\Delta X_s) \, \dd s + \sqrt{2} B_t,
    \end{align} where the integral is an integral in \(H_{-(N+2)}\).
\end{lemma}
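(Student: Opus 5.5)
The plan is to construct $X$ through the weak (duality) formulation, where the semigroup bounds of Lemma~\ref{lemma:semigroup_norm_bound_schwartz} and Lemma~\ref{lemma:the_semigroup_on_schwartz_space} are available, and then lift it to a continuous $H_{-N}$-valued process via a Kolmogorov-type criterion in a Hilbert space.

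\emph{Pointwise definition.} For fixed $x$ and $t$ the sum $\sum_y \int_0^t p_{t-s}(x,y)\,\dd B_s(y)$ is a sum of independent centred Gaussians. Its variance is $2\int_0^t \sum_y p_{s}(x,y)^2\,\dd s \le 2t$, because $\sum_y p_s(x,y)^2\le \sum_y p_s(x,y)\le 1$ by conservativeness (Lemma~\ref{lemma:non-explosive}). The series therefore converges in $L^2(\mathbb{P})$ and almost surely.

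\emph{Continuous version in $H_{-N}$.} To obtain continuity, fix $N>d/2$ and estimate $\mathbb{E}\Vert X_t-X_r\Vert_{H_{-N}}^2=\sum_x \jap{x}^{-2N}\mathbb{E}|X_t(x)-X_r(x)|^2$ for $r<t\le T$. The increment splits into two pieces:
\begin{align*}
X_t(x)-X_r(x)=\sqrt2\sum_y\int_r^t p_{t-s}(x,y)\,\dd B_s(y)+\sqrt2\sum_y\int_0^r [p_{t-s}(x,y)-p_{r-s}(x,y)]\,\dd B_s(y).
\end{align*}
\begin{itemize}
\item The first piece has variance at most $2(t-r)$.
\item For the second piece I use the semigroup property. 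Its variance is $2\int_0^r \Vert (P_{t-r}-I)p_{u}(x,\cdot)\Vert_{\ell^2}^2\,\dd u$. Writing $P_{t-r}-I=-\int_0^{t-r}\Delta P_v\,\dd v$ on $\mathbf{1}_x\in\mathcal{S}$ and using Lemma~\ref{lemma:the_semigroup_on_H_N}(iv),(v) together with \eqref{eq:semigroup_norm_bound_schwartz}, I expect a bound of the form $\Vert (P_{t-r}-I)P_u\mathbf{1}_x\Vert_{\ell^2}\le C_T (t-r)\Vert \mathbf{1}_x\Vert_{H_{2}}\le C_T(t-r)\jap{x}^{2}$. Interpolating with the trivial bound $2$ then gives $C_T (t-r)^{1/2}\jap{x}$.
\end{itemize}
Combining, $\mathbb{E}|X_t(x)-X_r(x)|^2\le C_T|t-r|\,\jap{x}^{2}$. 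Because the increments are Gaussian in the Hilbert space $H_{-N}$, the higher moments satisfy $\mathbb{E}\Vert X_t-X_r\Vert^{2p}_{H_{-N}}\le C_p(\mathbb{E}\Vert X_t-X_r\Vert^2_{H_{-N}})^p$. Choosing $N>d/2+1$, so that $\sum_x\jap{x}^{2-2N}<\infty$, Kolmogorov's continuity theorem yields a continuous version $[0,\infty)\to H_{-N}$. The version is first built on each $[0,T]$ and then patched together.

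\emph{Integral identity.} I verify \eqref{equation:SDE_with_initial_config_0_integral_form} weakly, tested against $u\in C_c$.
\begin{itemize}
\item Lemma~\ref{lemma:the_semigroup_on_H_N}(iv) shows that $\Delta$ maps $H_{-N}$ boundedly to $H_{-(N+2)}$ (by duality). Hence $s\mapsto\Delta X_s$ is continuous in $H_{-(N+2)}$ and the Bochner integral exists.
\item For fixed $u$, $\langle X_t,u\rangle=\sqrt2\sum_y\int_0^t (P_{t-s}u)(y)\,\dd B_s(y)$. Writing $P_{t-s}u=u-\int_s^t \Delta P_{v-s}u\,\dd v$ by Lemma~\ref{lemma:the_semigroup_on_schwartz_space}(iii) and applying stochastic Fubini gives $\langle X_t,u\rangle=\sqrt2\langle B_t,u\rangle-\int_0^t\langle X_v,\Delta u\rangle\,\dd v$, almost surely for each $t$.
\item Both sides are continuous in $t$, so the identity holds simultaneously for all $t$ and all $u$ in a countable dense subset of $C_c$. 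Density then gives the identity in $H_{-(N+2)}$. Here $B_t\in H_{-N}$ as well, since $\mathbb{E}\Vert B_t\Vert^2_{H_{-N}}=t\sum_x\jap{x}^{-2N}<\infty$ and $B$ is continuous.
\end{itemize}

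\emph{Main obstacle.} I expect the hardest step to be the time-regularity estimate for the second piece of the increment. Unbounded conductances prevent a uniform bound, so the polynomial weight $\jap{x}^2$ has to be absorbed by choosing $N$ large, using the $H_N$ bounds of Lemma~\ref{lemma:semigroup_norm_bound_schwartz}. Justifying the interchange in the stochastic Fubini step is the secondary technical point.
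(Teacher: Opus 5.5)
Your proof is correct, but it takes a different route from the paper's.

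\textbf{The paper's route.} It never uses $L^2(\mathbb{P})$-convergence of the Gaussian series or a Kolmogorov criterion. First it derives the pointwise decay $|\Delta p_t(\cdot,y)(x)|^2\lesssim_T \jap{y}^{-2M}\jap{x}^{2(M+2)}$ from the weighted bound of Lemma~\ref{lemma:semigroup_norm_bound_schwartz} applied to $P_t\mathbf{1}_x$. It then applies stochastic Fubini to each summand $X_t(x,y)=\sqrt2\int_0^t p_{t-s}(x,y)\,\dd B_s(y)$, which gives $\mathbb{E}[\sup_{t\le T}|X_t(x,y)|^2]\lesssim_T \mathbf{1}_{y=x}+\jap{y}^{-2M}\jap{x}^{2(M+2)}$.

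Summing over $y$, the series $\sum_y X_t(x,y)$ converges absolutely and uniformly on compact time intervals almost surely. Consequently:
\begin{itemize}
\item continuity of $t\mapsto X_t(x)$ is inherited directly from the summands;
\item continuity in $H_{-N}$ follows by dominated convergence with the majorant $\sup_{t\le T}|X_t(x)|$;
\item the integral identity comes out summand by summand from the same Fubini formula.
\end{itemize}
This gives the sum a pathwise meaning for all $t$ simultaneously and makes adaptedness of the version automatic. The price is that it needs polynomial decay in $y$ and forces a fairly large $N$ (essentially $N>M+2+d/2$ with $M>d$).

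\textbf{Your route.} You use only $\ell^2$-information: conservativeness, $\ell^2$-contractivity and $\norm{\Delta\mathbf{1}_x}_{\ell^2}\lesssim\jap{x}^2$. You combine this with the Gaussian moment equivalence, which indeed follows from Minkowski in $L^p$ applied coordinatewise, and with Kolmogorov--Chentsov in $H_{-N}$. This yields any $N>d/2+1$ and H\"older paths as a bonus. The trade-off is that $X_t(x)$ is only an $L^2$/a.s.\ limit for each fixed $t$, so you have to work with a modification $\tilde X$.

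\textbf{Points to record explicitly.}
\begin{itemize}
\item When you replace $X$ by $\tilde X$ inside the time integral, note that $\int_0^t\langle X_v-\tilde X_v,\Delta u\rangle\,\dd v=0$ almost surely. This follows by Fubini once you fix a jointly measurable version.
\item Adaptedness of $\tilde X$, needed later for Theorem~\ref{theorem:well-posed}, requires the usual augmentation of $\mathbb{F}$.
\item The interchange of $\sum_y$ with $\int_0^t\dd v$ is justified by $L^2(\Omega\times[0,t])$-convergence of the partial sums, since $\int_0^t\norm{P_w\Delta u}_{\ell^2}^2\,\dd w<\infty$.
\item The countable family $\{\mathbf{1}_x\}$ already suffices as test functions. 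Point evaluations are continuous on $H_{-(N+2)}$, so no density argument is needed.
\end{itemize}
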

\begin{proof}
    We will heavily capitalize on the following bound for \(0 \leq t \leq T\),
    \begin{equation}\label{equation:pointwise_bound_laplace_of_heat_kernel}
    \begin{split}
        \big\vert \Delta p_{t}(\cdot, y)(x) \big\vert^2
        &= \big\vert \Delta p_{t}(x, \cdot)(y) \big\vert^2
        \leq \jap{y}^{-2M} \Vert \Delta P_t \mathbf{1}_x \Vert_{H_{M}}^2
        \lesssim_T \jap{y}^{-2M} \Vert P_t \mathbf{1}_x \Vert_{H_{M + 2}}^2 \\
        &\lesssim_T \jap{y}^{-2M} \Vert \mathbf{1}_x \Vert_{H_{M + 2}}^2
        = \jap{y}^{-2M} \jap{x}^{2(M+2)}, 
        \end{split}
    \end{equation} 
    so that the remaining work simply consists in noticing that this deterministic decay transfers to corresponding stochastic integrals. 
    To make use of it, we choose \(M \geq 0\) big enough s.t. \(\sum_x \jap{x}^{-M} < \infty\) and then choose \(N > M\) big enough s.t. \(\sum_x \jap{x}^{2(M+2) - 2N} < \infty\).
    
    Now, for \(x, y \in \Cluster\) (which are countably many) choose continuous in \(t\) versions of
    \begin{align*}
        X_t(x, y)
        := \sqrt{2}\int_0^t \, p_{t-s}(x, y) \, \dd B_s(y)
    \end{align*} 
    such that stochastic Fubini gives almost surely for all \(t \geq 0\) that
    \begin{align}\label{equation:stochastic_fubini_applied_to_X_t_x_y}
            X_t(x, y)
            = \sqrt{2}B_t(x) \mathbf{1}_{y = x} - \sqrt{2}\int_0^t \Big(\int_0^r \Delta p_{r-s}(\cdot, y)(x) \,\dd B_s(y) \Big) \dd  r,
    \end{align} with continuous in \(t\) versions on the r.h.s.
    
    Let us first show that almost surely, \(X_t(x) := \sum_y X_t(x, y)\) is well-defined for all \(t \geq 0\).
    We get, for fixed \(T \geq 0\), 
    \begin{equation}\label{equation:expectation_bound_compact_time_sup_X_t_x}
    \begin{split}
        &\mathbb{E}\big[\sup_{0 \leq t \leq T} \vert X_t(x, y)\vert^2 \big] \\
        &\lesssim \mathbb{E}\big[\sup_{0 \leq t \leq T} \vert B_t(x)\vert^2\big] \mathbf{1}_{y = x} + \mathbb{E}\Big[\sup_{0 \leq t \leq T} \Big\vert \int_0^t \Big(\int_0^r \Delta p_{r-s}(\cdot, y)(x) \,\dd B_s(y) \Big) \dd  r \Big\vert^2\Big] \\
        &\lesssim  T \mathbf{1}_{y = x} + \mathbb{E}\Big[\Big(\int_0^T \Big\vert \int_0^r \Delta p_{r-s}(\cdot, y)(x) \,\dd B_s(y) \Big\vert \dd  r \Big)^2\Big] \\
        &\lesssim  T\mathbf{1}_{y = x} +  \Big(\int_0^T \mathbb{E}\Big[\Big\vert \int_0^r \Delta p_{r-s}(\cdot, y)(x) \,\dd B_s(y) \Big\vert^2\Big]^{1/2} \dd r \Big)^2 \\
        &= T \mathbf{1}_{y = x}  + \Big(\int_0^T \Big[\int_0^r \big\vert \Delta p_{r-s}(\cdot, y)(x) \big\vert^2 \,\dd s \Big]^{1/2} \dd r \Big)^2  \\
        &\lesssim_T \mathbf{1}_{y = x} + \jap{y}^{-2M} \jap{x}^{2(M+2)} 
        \end{split}
    \end{equation} by Minkowski's integral inequality.

    Well-definedness of \(X_t(x) = \sum_y X_t(x, y)\) follows now by
    \begin{equation}\label{equation:expectation_bound_compact_time_sup_X_t_x-2}
    \begin{split}
        &\mathbb{E}\Big[\sup_{0 \leq t \leq T} \sum_y \vert X_t(x, y) \vert \Big]
        \leq \sum_y \mathbb{E}\Big[\sup_{0 \leq t \leq T} \vert X_t(x, y) \vert \Big]
        \leq \sum_y \mathbb{E}\big[\sup_{0 \leq t \leq T} \vert X_t(x, y) \vert^2 \big]^{1/2} \\
        &\lesssim_{T, x} \sum_y \jap{y}^{-M} 
        < \infty.
        \end{split}
    \end{equation}

    As a next step, we see that almost surely \(t \mapsto X_t(x)\) is continuous for all \(x \in \Cluster\). Indeed, this follows readily from continuity of \(t \mapsto X_t(x, y)\) and majorization provided by \eqref{equation:expectation_bound_compact_time_sup_X_t_x}.
    We can now derive the continuity \(t \mapsto X_t\) in \(H_{-N}\) from that, using again a majorization given by, and shown very similarly to \eqref{equation:expectation_bound_compact_time_sup_X_t_x},
    \begin{equation}\begin{split}
        \mathbb{E}\Big[ \Big\Vert\sup_{0 \leq t \leq T} \vert X_t\vert \Big\Vert_{H_{-N}}^2 \Big]
        &= \sum_{x} \jap{x}^{-2 N} \mathbb{E}\big[\sup_{0 \leq t \leq T} \vert X_t(x) \vert^2 ]
        \\
        &\lesssim_{T, M} \sum_{x} \jap{x}^{-2 N} \jap{x}^{2(M+2)}
        < \infty.
        \end{split}
    \end{equation}
    The identity \eqref{equation:SDE_with_initial_config_0_integral_form} follows now pointwise, i.e.\, for \(X_t(x)\), essentially from \eqref{equation:stochastic_fubini_applied_to_X_t_x_y}. Then we identify the integral as an integral in \(H_{-(N+2)}\) by the continuity of the integrand in \(H_{-(N+2)}\).
\end{proof}

\begin{lemma}[Uniqueness]\label{lemma:uniqueness-stochastic-part}
    Suppose \((\Omega, \mathcal{F}, \mathbb{P})\) is any probability space supporting independent Brownian motions \((B_t(x))_{x \in \Cluster}\) and, for some \(M \geq 0\), a random continuous function \(\psi \colon [0, \infty) \to H_{-M}, t \mapsto (\psi_t(x))_{x \in \Cluster}\) satisfying
    \begin{align*}
        \begin{cases}
            \psi_0 = \varphi_0, \\
            \dd \psi_t = (-\Delta \psi_t) \dd t + \sqrt{2} \dd B_t.
        \end{cases}
    \end{align*} 
    Then
    \begin{align*}
        \psi
        = P_{\cdot} \varphi_0 + X,
    \end{align*} where \(X\) is given by Lemma~\ref{lemma:construction_of_nice_noise_term}.
\end{lemma}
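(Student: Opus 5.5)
The plan is to reduce uniqueness to the corresponding statement for the deterministic heat equation on \(H_{-M'}\), and then settle that by duality with \(\mathcal{S}\). By Lemma~\ref{lemma:construction_of_nice_noise_term} and Lemma~\ref{lemma:mapping-properties-tempered-distributions}, the process \(\widetilde\psi_t := P_t\varphi_0 + X_t\) solves the same equation. Fix an index \(M' \geq \max\{M, N\}+2\) with \(\varphi_0 \in H_{-M'}\), so that all identities make sense in \(H_{-(M'+2)}\). The difference \(v_t := \psi_t - \widetilde\psi_t\) is then a random continuous function \([0,\infty)\to H_{-M'}\). The Brownian terms cancel, so almost surely, for every \(t\geq 0\),
\begin{align*}
    v_t = -\int_0^t \Delta v_s \,\dd s, \qquad v_0 = 0,
\end{align*}
as an identity in \(H_{-(M'+2)}\). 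This is a pathwise deterministic statement, so it suffices to show that any continuous \(v\colon[0,\infty)\to H_{-M'}\) solving this integral equation vanishes identically.

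For this I would pair \(v\) against the backward heat flow of a test function. Fix \(t>0\) and \(u\in\mathcal{S}\), and set \(F(s) := \langle v_s, P_{t-s}u\rangle_{\mathcal{S}',\mathcal{S}}\) for \(s\in[0,t]\). By Lemma~\ref{lemma:the_semigroup_on_schwartz_space}, the map \(s\mapsto P_{t-s}u\) is differentiable in \(\mathcal{S}\), hence in \(H_{M'+2}\), with derivative \(\Delta P_{t-s}u\). The map \(s\mapsto v_s\) is differentiable in \(H_{-(M'+2)}\) with derivative \(-\Delta v_s\), by continuity of the integrand. The product rule for the bilinear pairing \(H_{-(M'+2)}\times H_{M'+2}\to\R\) then gives
\begin{align*}
    F'(s) = -\langle \Delta v_s, P_{t-s}u\rangle + \langle v_s, \Delta P_{t-s}u\rangle = 0.
\end{align*}
The second equality uses that \(\Delta\) is symmetric for the pairing, i.e.\ \(\langle \Delta f, g\rangle = \langle f, \Delta g\rangle\) for \(f\in\mathcal{S}'\) and \(g\in\mathcal{S}\). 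This is exactly item ii.\ of Lemma~\ref{lemma:mapping-properties-tempered-distributions}: the distributional Laplacian agrees with the formal one, and the sums may be interchanged under Assumption~\ref{assumption:at_most_quadratically_growing}. It follows that \(\langle v_t,u\rangle = F(t) = F(0) = \langle v_0, P_t u\rangle = 0\) for all \(u\in\mathcal{S}\). Hence \(v_t=0\) for every \(t\), simultaneously on the almost sure event where the integral identities hold.

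The main obstacle is the product-rule step. The two factors converge in different spaces, so one has to make sure the indices match up. \(v\) is only continuous into \(H_{-M'}\) and differentiable into \(H_{-(M'+2)}\), while \(P_{t-s}u\) has to be differentiable in \(H_{M'+2}\) with derivative continuous there. The latter is exactly what Lemma~\ref{lemma:the_semigroup_on_H_N} v.\ provides, applied with index \(M'+2\), since \(u\in\mathcal{S}\subseteq H_{M'+4}\). Writing out the difference quotient of \(F\) and splitting it into two terms, each term is controlled by the operator bound \eqref{eq:semigroup_norm_bound_schwartz} on compact time intervals together with the uniform continuity of \(v\) on \([0,t]\).
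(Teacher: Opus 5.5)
Your proof is correct and follows the same route as the paper. Both subtract the stochastic convolution so that the Brownian terms cancel, which reduces the claim to pathwise uniqueness for the deterministic heat equation in \(H_{-(M'+2)}\). The paper only asserts that last step (``This already implies \(Z_t = P_t\varphi_0\)''), and your duality argument with \(s\mapsto\langle v_s, P_{t-s}u\rangle\) is a valid way of filling it in.
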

\begin{proof}
    Let \(Z_t := \psi_t - X_t\).
    Then, almost surely, for all \(t \geq 0\) it holds
    \begin{align}
        Z_t
        = \varphi_0 + \int_0^t \, (-\Delta Z_s) \, \dd s,
    \end{align} where the integral is an integral in \(H_{-M-2}\) by continuity of \(s \mapsto \Delta Z_s\) in \(H_{-M-2}\).
    This already implies \(Z_t = P_t \varphi_0\).
\end{proof}

\subsection{Well-posedness of the SDE}
We can now briefly put together the previous results to complete the proof of Theorem~\ref{theorem:well-posed}.

\begin{proof}[Proof of Theorem~\ref{theorem:well-posed}]
    The non-explosion of the variable speed random walk under Assumption~\ref{assumption:at_most_quadratically_growing} follows from Lemma~\ref{lemma:non-explosive}. In particular, its transition kernel $p_t(\cdot, \cdot)$ is conservative and the corresponding heat semigroup $(P_t)_{t\geq 0}$ is well-defined.

    Now fix an initial condition $\phi_0 \in \mathcal{S}'$. Then by the characterisation of $\mathcal{S}'$ there is some $M_0 = M_0(\phi) \in \N$ such that $\phi_0 \in H_{-M_0}$.
    By Lemma~\ref{lemma:mapping-properties-tempered-distributions} and Remark~\ref{remark:C0_semigroup_on_H_minus_N_and_analyticity} $(P_t)_{t\geq0}$ acts as a $C_0$-semigroup on $H_{-M_0}$, so the map $[0,\infty)\ni t \mapsto P_t \phi_0 \in H_{-M_0}$ is continuous. Moreover, Lemma~\ref{lemma:mapping-properties-tempered-distributions} gives the pointwise representation 
    \begin{equation*}
        (P_t\phi_0)(x) 
        =
        \sum_{y\in \Cluster}p_t(x,y)\phi_0(y), \quad x \in \Cluster,
    \end{equation*}
    where the sum converges absolutely. 

    Assumption~\ref{assumption:at_most_quadratically_growing} ensures that the formal Laplacian extends to a continuous map $\Delta: H_{-M} \to H_{-(M+2)}$ for any $M\geq 0$, so that also the map $[0,\infty) \mapsto \Delta P_s \phi_0 \in H_{-(M_0+2)}$ is continuous and can thus be integrated to yield 
    \begin{equation*}
        P_t\phi_0 = \phi_0 + \int_0^t (-\Delta)P_s\phi_0 \dd s
    \end{equation*}
    as an identity in $H_{-(M_0+2)}$. 

    On the other hand, Lemma~\ref{lemma:construction_of_nice_noise_term} yields some $M_1 \in \N$ and a version of the stochastic term 
    \begin{equation*}
        X_t(x) = \sqrt{2}\sum_{y \in \Cluster}\int_0^t p_{t-s}(x,y)\dd B_s(y), \quad x \in \Cluster,  
    \end{equation*}
    such that $X\in C([0,\infty); H_{-M_1})$ and with the property that almost surely for all $t\geq 0$ we have 
    \begin{equation}
        X_t = \int_0^t (-\Delta X_s) \dd s + \sqrt{2}B_t
    \end{equation}
    as an identity in $H_{-(M_1+2)}$. Now we can set $N:=M_0 \vee M_1$ and define $\phi_t := P_t \phi_0 + X_t$. By definition of $N$, both summands have continuous $H_{-N}$-valued paths and therefore almost surely $\phi \in C([0,\infty); H_{-N}) \subset C([0,\infty); \mathcal{S}')$. 

    The process is adapted since $P_t\phi_0$ is deterministic and the stochastic convolution $X_t$ is measurable with respect to the Brownian filtration up to time $t$. Additionally, we can add the two integral representations of the deterministic and the stochastic term to see that 
    \begin{equation*}
        \phi_t = \phi_0 - \int_0^t \Delta \phi_s \dd s + \sqrt{2}B_t, \quad t\geq 0,
    \end{equation*}
    holds as an identity in $H_{-(N+2)}$. Thus, $\phi$ is a solution of the SDE \eqref{sde} and by construction we have the pointwise representation 
    \begin{equation*}
        \phi_t(x) = (P_t\phi_0)(x) + \sqrt{2}\sum_{y\in \Cluster}\int_0^tp_{t-s}(x,y)\dd B_s(y),
        \quad x \in \Cluster, t \geq 0, 
    \end{equation*}
    which is precisely \eqref{sde-formal-solution}. 
    Uniqueness among all adapted processes with continuous paths in $H_{-M}$ for some $M \geq 0$ follows from Lemma~\ref{lemma:uniqueness-stochastic-part}. 
\end{proof}

\section{Characterisation of stationary and reversible measures}\label{sec:characterisation-stationary-reversible}
As a next step, we now start to derive characterisations of the stationary measures of the system of SDEs \eqref{sde}. For this, we adapt the classical strategy from \cite{holley_generalized_1978} to our setting. 
One of the main technical ingredients is the following simple observation that will help us simplify many computations. 

\begin{lemma}\label{lemma:martingale-property}
    Let $(\phi_t)_{t\geq0}$ be a solution of \eqref{sde}, $T>0$, and $u\in \mathcal{S}$. Then the process defined by 
    \begin{align*}
        Y^T_u(t) := \exp\left(i \langle P_{T-t}u, \phi_t\rangle + \int_0^{T \wedge t} \norm{P_{T-s}u}^2_2 \dd s\right), \quad 0 \leq t \leq T,
    \end{align*}
    is a martingale with respect to the filtration $\mathbb{F}$. 
\end{lemma}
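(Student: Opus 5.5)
The plan is to use the explicit Gaussian structure of the solution from Theorem~\ref{theorem:well-posed}. The key object is the random variable $Z_t := \langle P_{T-t}u, \phi_t\rangle$. Once we have its conditional law given $\mathcal{F}_s$, the martingale property is a direct computation with the Gaussian characteristic function.

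\textbf{Step 1: a decomposition of $\phi_t$.} For $0\le s\le t\le T$ we use the semigroup property and the pointwise formula \eqref{sde-formal-solution} to write
\[
\phi_t = P_{t-s}\phi_s + \sqrt2\sum_y\int_s^t p_{t-r}(\cdot,y)\,\dd B_r(y).
\]
This is the Markov/flow property of the mild solution. It follows from $P_{t-s}P_s=P_t$ on $\mathcal{S}'$ (Lemma~\ref{lemma:mapping-properties-tempered-distributions}) and from Chapman--Kolmogorov for $p$, applied to the stochastic convolution. Interchanging $P_{t-s}$ with the stochastic sum and integral is justified by the same bounds as in Lemma~\ref{lemma:construction_of_nice_noise_term}.

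\textbf{Step 2: pairing with $P_{T-t}u$.} Pairing the decomposition with $P_{T-t}u\in\mathcal{S}$ (Lemma~\ref{lemma:the_semigroup_on_schwartz_space}) and using duality together with the symmetry $p_r(x,y)=p_r(y,x)$ gives
\[
\langle P_{T-t}u,\phi_t\rangle=\langle P_{T-s}u,\phi_s\rangle+\sqrt2\sum_y\int_s^t (P_{T-r}u)(y)\,\dd B_r(y).
\]
The two interchanges needed here are controlled as follows:
\begin{itemize}
\item swapping $\sum_x (P_{T-t}u)(x)$ with $\sum_y\int_s^t$ is justified by an $L^2(\mathbb{P})$ estimate;
\item that estimate uses $\sum_x |P_{T-t}u(x)|\,\|p_{t-\cdot}(x,\cdot)\|_{L^2}$ together with \eqref{eq:pointwise_control_heat_kernel} and the rapid decay of $P_{T-t}u$.
\end{itemize}

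\textbf{Step 3: the conditional characteristic function.} The remaining stochastic integral is a Wiener integral of the deterministic integrand $r\mapsto \sqrt2\,P_{T-r}u$ in $L^2([s,t];\ell^2)$. It is independent of $\mathcal{F}^B_s$ and centred Gaussian, with variance
\[
2\int_s^t\|P_{T-r}u\|_2^2\,\dd r.
\]
Hence
\[
\mathbb{E}\bigl[e^{iZ_t}\mid\mathcal{F}_s\bigr]=e^{iZ_s}\exp\Bigl(-\int_s^t\|P_{T-r}u\|_2^2\,\dd r\Bigr).
\]

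\textbf{Step 4: conclusion.} Multiplying by $\exp(\int_0^t\|P_{T-r}u\|_2^2\,\dd r)$ gives $\mathbb{E}[Y_u^T(t)\mid\mathcal{F}_s]=Y_u^T(s)$. Integrability is immediate because $|Y_u^T(t)|\le \exp(T\|u\|_2^2)$, using that $P_t$ is a contraction on $\ell^2$. Adaptedness follows from Theorem~\ref{theorem:well-posed}.

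One caveat concerns the normalisation in the statement. The exponent in $Y_u^T$ is $\int\|P_{T-s}u\|^2$, while the computed variance is $2\int\|P_{T-r}u\|_2^2$. This matches only if the characteristic function $\exp(-\frac12\mathrm{Var})$ is used, so in Step 4 I will track the factor $\sqrt2$ carefully; the half in the Gaussian formula cancels the $2$. This is consistent with the covariance $G$ in Proposition~\ref{proposition:gaussian-gibbs-measures}.

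The main obstacle is Step 2. It requires justifying the Fubini-type interchanges between the infinite spatial sum, the duality pairing, and the stochastic integral. I will do this with the $L^2(\mathbb{P})$ bounds from Lemma~\ref{lemma:construction_of_nice_noise_term}, working first for $u\in C_c$ and extending by density in $\mathcal{S}$ if needed.
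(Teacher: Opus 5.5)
Your proof is correct, but it takes a different route from the paper.

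\textbf{The paper's route.} The paper never uses the mild formula or the flow property. For a \emph{fixed} $v\in\mathcal{S}$ it works only with the weak form of \eqref{sde}, in which $\langle v,\phi_t\rangle-\langle v,\phi_s\rangle+\int_s^t\langle\Delta v,\phi_r\rangle\dd r=\sqrt2\langle v,B_t-B_s\rangle$. So the associated complex exponential, with compensator $\norm{v}_2^2(t-s)$, is an elementary martingale (the paper cites Novikov). The time-dependent test function $P_{T-t}u$ is then handled by freezing it on the cells of an equidistant partition of $[t_1,t_2]$ and multiplying the corresponding exponential martingales. Each finite product satisfies $\mathbb{E}[\prod\cdot\,\mathbf{1}_H]=\mathbb{P}(H)$ for $H\in\mathcal{F}^B_{t_1}$, by the tower property. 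The products converge almost surely to $Y^T_u(t_2)/Y^T_u(t_1)$ by joint continuity of $(s,t)\mapsto\langle P_tu,\phi_s\rangle$, and in $L^1$ by bounded convergence. A monotone class argument finishes. This is essentially an Itô formula for time-dependent test functions done by hand, and it needs only the weak formulation and path continuity.

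\textbf{Comparison.} Your approach identifies $Z_t-Z_s$ explicitly as a Wiener integral independent of $\mathcal{F}^B_s$. That gives the conditional characteristic function in one line and is more transparent. The price is interchanging the spatial sum with the stochastic integral. This is harmless here: $P_{T-t}u\in\ell^1$, and the noise term at $x$ has second moment $2\int_0^t\sum_y p_r(x,y)^2\dd r\le 2t$ uniformly in $x$.

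\textbf{Two small remarks.} Your Step 1 is dispensable. Pairing \eqref{sde-formal-solution} from time $0$ with $P_{T-t}u$ gives $Z_t=\langle P_Tu,\phi_0\rangle+\sqrt2\sum_y\int_0^t(P_{T-r}u)(y)\dd B_r(y)$ directly, so no flow property is needed. Your normalisation check is right: the factor $\tfrac12$ in the Gaussian characteristic function cancels the $2$ in the variance, so the statement holds exactly as written.
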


\begin{proof}
    By Novikov's criterion, see e.g.\ \cite[Theorem 5.23]{le_gall_brownian_2016},  the stochastic exponential 
    \begin{align*}
        X_u^s := \exp\left(i\left(\langle u, \phi_{t \vee s}\rangle-\langle u, \phi_s\rangle + \int_s^{t\vee s}\langle \Delta u, \phi_r\rangle \dd r \right)+\norm{u}^2_2(t \vee s - s)\right)
    \end{align*}
    is a martingale. Now we can use the continuity of the map
    \begin{align*}
        [0, \infty)^2 \ni (s,t) \mapsto \langle P_t u , \phi_s \rangle \in \R,
    \end{align*}
    which is a consequence of Theorem~\ref{theorem:well-posed} and Lemma~\ref{lemma:mapping-properties-tempered-distributions}, 
    to see that  for any $0 \leq t_1 < t_2 \leq T$  the following limit exists almost surely
    \begin{align}\label{proof:eqn-limit}
        \frac{Y^T_u(t_2)}{Y^T_u(t_1)}
        =
        \lim_{n \to \infty}\prod_{k=0}^{n-1}X^{\tau_{n,k}}_{P_{T-\tau_{n,k}}u}(\tau_{n,k+1}),
    \end{align}
    where we use the equidistant partitions $\tau_{n,k} = t_1 + \frac{k}{n}(t_2 - t_1)$. By dominated convergence this convergence also holds in $L^1$. 
    The martingale property of the stochastic exponential $X$ and the tower rule for conditional expectations implies that for any event $H \in \calF_{t_1}^B$ and $1 \leq m \leq n$ we have
    \begin{align}\label{proof:eqn-iterate}
        \mathbb{E}\left[\prod_{k=0}^{m-1}X^{\tau_{n,k}}_{P_{T-\tau_{n,k}}u}(\tau_{n,k+1})\mathbf{1}_H \right]
        =
        \mathbb{E}\left[\prod_{k=0}^{m-2}X^{\tau_{n,k}}_{P_{T-\tau_{n,k}}u}(\tau_{n,k+1})\mathbf{1}_H \right],
    \end{align}
   with the convention that $\prod^{-1}_{k=0} = 1$. By using  that the convergence in \eqref{proof:eqn-limit} holds in $L^1$, and by iteratively using \eqref{proof:eqn-iterate} 
   \begin{align*}
       \mathbb{E}\left[\frac{Y^T_u(t_2)}{Y_u^T(t_1)}\mathbf{1}_H\right]
       =
       \lim_{n \to \infty}
       \mathbb{E}\left[\prod_{k=0}^{n-1}X^{\tau_{n,k}}_{P_{T-\tau_{n,k}}u}(\tau_{n,k+1})\mathbf{1}_H\right]
       =
       \mathbb{E}\left[\mathbf{1}_H\right].
   \end{align*}
   So by a monotone class argument, we see that $Y^T_u$ is indeed a martingale. 
\end{proof}

With this technical helper in place, we can now use it to derive 
the previously claimed representation for all stationary measures.

\begin{proof}[Proof of Proposition~\ref{proposition:gaussian-disintegration} ]
    We will use explicit calculations for the characteristic functional of $\nu$. By the assumed stationarity of $\nu$ we have for all $t\geq 0$ and $u\in \mathcal{S}$
    \begin{align*}
        C_\nu(u) 
        \overset{\text{def}}{=}
        \int_{\mathcal{S}'(\Z^d)}\exp(i\langle u, \phi\rangle)\nu(d\phi)
        &\overset{\text{stationary}}{=}
        \int_{\mathcal{S}'(\Z^d)}\mathbb{E}_\phi[\exp(i\langle u, \phi_t\rangle)]\nu(d\phi).
    \end{align*}
    We can rewrite this in terms of the process $(Y^t_u(s))_{s \geq 0}$ from Lemma \ref{lemma:martingale-property} and use its martingale property to see that 
    \begin{equation}\label{proof:stationary-characterisation-1}
    \begin{split}
        C_\nu(u)
        &=
        \int_{\mathcal{S}'(\Z^d)}\mathbb{E}_\phi[\exp(i\langle u, \phi_t\rangle)]\nu(d\phi) \\
        &=
        \exp\left(-\int_0^t \norm{P_{t-s}u}^2_2 ds\right)
        \int_{\mathcal{S}'(\Z^d)}\mathbb{E}_\phi[Y^t_u(t)]\nu(d\phi)
        \\
        &\overset{\ref{lemma:martingale-property}}{=}
        \exp\left(-\int_0^t \norm{P_{t-s}u}^2_2 ds\right)
        \int_{\mathcal{S}'(\Z^d)}e^{i\langle P_t u, \phi \rangle}\nu(d\phi). 
    \end{split}
    \end{equation}
    Let us denote the measure with characteristic function given by the last integral in \eqref{proof:stationary-characterisation-1} by $\nu_t$, i.e.\, 
    \begin{align*}
        C_{\nu_t}(u) 
        = \int_{\mathcal{S}'(\Z^d)}e^{i \langle u, \phi \rangle }\nu_t(d\phi) 
        =\int_{\mathcal{S}'(\Z^d)}e^{i\langle P_t u, \phi \rangle}\nu(d\phi). 
    \end{align*}
    Now send $t\to \infty$ and use the continuity of $u\mapsto \int_0^\infty \norm{P_t u}^2_2 \dd t$ which holds by Assumption~\ref{assumption:green_function_is_tempered}, to see that $(\nu_t)_{t \geq 0}$ converges to some probability measure $\varrho$ on $\mathcal{S}'(\Z^d)$ with characteristic function given by 
    \begin{align*}
        C_\varrho(u) = \exp\left(\int_0^\infty\norm{P_s u}^2_2\dd s \right)C_\nu(u). 
    \end{align*}
    But this implies that 
    \begin{align*}
        C_\nu(u) 
        &= \int_{\mathcal{S}'(\Z^d)}\exp\left(i\langle u, \xi\rangle - \int_0^\infty \norm{P_s u}^2_2 ds\right)\varrho(d\xi)
        \\\
    &=\int_{\mathcal{S}'(\Z^d)}\int_{\mathcal{S}'(\Z^d)}\exp(i\langle u, \phi\rangle)\mu_\xi(d\phi)\varrho(d \xi),
    \end{align*}
    so we have shown \eqref{eq:disintegration}. To show that the measure $\varrho$ satisfies \eqref{eq:measure-preserving}, notice that for any $t\geq 0$ we have by $\nu_s \rightharpoonup \varrho$ that 
    \begin{align*}
        \int_{\mathcal{S}'(\Z^d)}\exp(i \langle P_t u, \xi\rangle) \varrho(d\xi)
        &=
        \lim_{s \to \infty}\int_{\mathcal{S}'(\Z^d)}\exp(i\langle P_tu,\xi\rangle)\nu_s(d\xi)
        \\
        &=
        \lim_{s \to \infty}\int_{\mathcal{S}'(\Z^d)}\exp(i\langle u, \xi\rangle)\nu_{t+s}(d\xi)
        \\
        &=
        \int_{\mathcal{S}'(\Z^d)}\exp(i\langle u, \xi \rangle) \varrho(d\xi). 
    \end{align*}
    Thus, the measure $\varrho(\cdot)$ is indeed invariant with respect to the action of the semigroup $(P_t)_{t\geq 0}$ of the variable speed random walk. Conversely, suppose that $\varrho(\cdot)$ is such that \eqref{eq:measure-preserving} holds and that $\nu$ is given by \eqref{eq:disintegration}. Then we have for any $t \geq 0$ by Lemma \ref{lemma:martingale-property}, 
    \begin{align*}
        \int_{\mathcal{S}'(\Z^d)}\mathbb{E}_\phi\left[e^{i\langle u, \phi_t \rangle}\right]\nu(\dd\phi)
        =&
        \int_{\mathcal{S}'(\Z^d)} \varrho(\dd\xi) \int_{\mathcal{S}'(\Z^d)}\mu_\xi(\dd\phi)
        \exp\left(i \langle P_t u, \phi \rangle - \int_0^t\norm{P_s u}^2_2\dd s\right)
        \\
        =&
        \int_{\mathcal{S}'(\Z^d)}\exp\left(i \langle P_t u, \xi\rangle -\int_0^\infty \norm{P_s u}^2_2 \dd s \right)\varrho(\dd\xi)
        \\
        \overset{\eqref{eq:measure-preserving}}{=}&
        \int_{\mathcal{S}'(\Z^d)}\exp\left(i \langle u, \xi \rangle - \int_0^\infty \norm{P_s u}^2_2 ds\right)\varrho(\dd\xi)
        \\
        \overset{\text{def}}{=}& 
        C_\nu(u). 
    \end{align*}
    where we used the definition of $\mathcal{N}(\xi,G)$ for the second equality and the invariance of $\varrho$ with respect to the semigroup $(P_t)_{t\geq0}$ for the third equality. 
\end{proof}

With similar ideas, we can now show that every element of $\mathscr{G}(\gamma^\omega)$ is actually a reversible measure for \eqref{sde} and vice versa. The implication that every reversible measure is necessarily an element of $\mathscr{G}(\gamma^\omega)$ is not contained in \cite{holley_generalized_1978} and to our knowledge novel in this context. 

\begin{proof}[Proof of Proposition~\ref{lemma:gibbs-reversible}]
    First, let $\nu \in \mathscr{G}(\gamma^\omega)$. Calculating both sides separately gives 
    \begin{align*}
        &\int_{\mathcal{S}'(\Z^d)} \mathbb{E}_\phi\left[\exp\left(
        i \left(\langle u, \phi_t\rangle + \langle v, \phi_s\rangle\right)
        \right)\right]\nu(d\phi)
        \\\
        =
        &\int_{\mathcal{S}'(\Z^d)} 
        \exp\Big(i \langle P_t u + P_s v, \xi\rangle - \int_0^\infty \norm{P_r u}^2_2 \dd r \\  
        & \quad \quad \quad \quad \quad \enspace - \int_0^\infty \norm{P_r v}^2_2 \dd r - 2 \int_0^\infty \langle P_{t-s+r}u, P_{r} v\rangle  \dd r\Big)\varrho(\dd\xi)
    \end{align*}
    and 
    \begin{align*}
        &\int_{\mathcal{S}'(\Z^d)}  \mathbb{E}_\phi\left[\exp\left(
            i \left(\langle u, \phi_s\rangle + \langle v, \phi_t\rangle\right)
        \right)\right]\nu(\dd\phi)
        \\\
        =
        &\int_{\mathcal{S}'(\Z^d)} \exp\Big(i \langle P_s u + P_t v, \xi\rangle - \int_0^\infty \norm{P_r u}^2_2 \dd r 
        \\
        & \quad \quad \quad \quad \quad \enspace - \int_0^\infty \norm{P_r v}^2_2 \dd r - 2\int_0^\infty \langle P_r u, P_{t-s+r} v\rangle  \dd r\Big)\varrho(\dd\xi). 
    \end{align*}
    Now the symmetry of the semigroup $(P_t)_{t \geq 0}$ with respect to the dual pairing implies 
    \begin{align*}
        \int_0^\infty \langle P_{t-s+r}u, P_r v\rangle \dd r 
        =
        \int_0^\infty \langle P_r u, P_{t-s+r}v\rangle \dd r
    \end{align*}
    and under assumption \eqref{eq:strict-invariance} we have 
    \begin{align*}
        \langle P_t u, \xi \rangle = \langle P_s u, \xi\rangle 
        \quad 
        \text{and}
        \quad 
        \langle P_t v, \xi \rangle = \langle P_s v, \xi \rangle \quad 
        \varrho\text{-almost surely}. 
    \end{align*}
    Thus, every $\nu \in \mathscr{G}(\gamma^\omega)$ is a reversible measure for \eqref{sde}. 

    To see the converse, first note that by using the same steps as in the proof of Proposition~\ref{proposition:gaussian-disintegration}, every reversible \(\nu\) can be expressed as 
\begin{align}
    \nu = \int_{\mathcal{S}'} \, \mathcal{N}(\xi, G^\omega) \, \varrho(\mathrm{d} \xi),
\end{align} where \(\varrho\) is reversible with respect to \((P_t)_{t \geq 0}\), i.e.\, it satisfies
\begin{align}
    \varrho[F(P_t \psi) \, G(\psi)]
    = \varrho[F(\psi) \, G(P_t \psi)]
\end{align} 
for all non-negative measurable \(F, G\) and \(t \geq 0\).
Now let us show that this implies \(\varrho(\{\psi \,\colon\, P_t \psi = \psi \,\forall t \geq 0\}) = 1\), which then implies that \(\nu\) is Gibbs by the characterisation in Proposition \ref{proposition:gaussian-gibbs-measures}.
We have
\begin{align}
    \varrho[F(P_t \psi) \, G(\psi)]
    &= \varrho[F(P_{t/2} P_{t/2} \psi) \, G(\psi)] 
    = \varrho[F(P_{t/2} \psi) \, G(P_{t/2} \psi)] \\
    &= \varrho[(FG)(P_{t/2} \psi) \cdot 1]
    = \varrho[(FG)(\psi) \cdot P_{t/2} 1] \nonumber\\
    &= \varrho[F(\psi) \, G(\psi)] \nonumber
\end{align} 
for all measurable \(F, G \geq 0\). It follows that \(P_t \psi = \psi\) for \(\varrho\)-a.e.\ \(\psi\). The continuity of \(t \mapsto P_t \psi\), see Lemma~\ref{lemma:mapping-properties-tempered-distributions} and Remark~\ref{remark:C0_semigroup_on_H_minus_N_and_analyticity}, lets us upgrade this to \(\varrho(\{\psi \,\colon\, P_t \psi = \psi \,\forall t \geq 0\}) = 1\).
\end{proof}

Before we proceed to studying the semigroup $(P_t)_{t\geq 0}$ a bit more in detail, let us show that if Assumption \ref{assumption:green_function_is_tempered} does not hold, then there can be no stationary measures for the SDE \eqref{sde} that are supported on $\mathcal{S}'(\Z^d)$. 

\begin{prop}\label{prop:non-existence}
    There are \((T_t)_{t \geq 0}\)-invariant probability measures on \(\mathcal{S}'\) if and only if \(x \mapsto G(x, x)\) is an \(\mathcal{S}'\)-function.
\end{prop}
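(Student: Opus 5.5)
We prove the two directions separately. Throughout, Assumption~\ref{assumption:at_most_quadratically_growing} is in force, so that $(T_t)_{t\geq 0}$ is well-defined by Theorem~\ref{theorem:well-posed}.

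\emph{Sufficiency.} Suppose $x\mapsto G(x,x)$ lies in $\mathcal{S}'$; this is Assumption~\ref{assumption:green_function_is_tempered}, with transience included since $G(x,x)<\infty$. The plan is to take $\nu=\mathcal{N}(0,G^\omega)$, or equivalently $\varrho=\delta_0$ in the converse part of Proposition~\ref{proposition:gaussian-disintegration}. This $\varrho$ is trivially $(P_t)$-invariant, so $\nu$ is stationary. The only point to check is that $\mathcal{N}(0,G)$ is supported on $\mathcal{S}'$. For this we compute
\begin{equation*}
\int \Vert\phi\Vert_{H_{-M}}^2\,\mathcal{N}(0,G)(\dd\phi)=\sum_x\jap{x}^{-2M}G(x,x).
\end{equation*}
The right-hand side is finite for $M$ large, because $G(x,x)\lesssim\jap{x}^{2N}$. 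Hence $\mathcal{N}(0,G)$ is concentrated on $H_{-M}\subseteq\mathcal{S}'$. The Gaussian measure itself is constructed by Kolmogorov's extension theorem from the positive semidefinite kernel $G$.

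\emph{Necessity.} Let $\nu$ be $(T_t)$-invariant on $\mathcal{S}'$. We cannot invoke Proposition~\ref{proposition:gaussian-disintegration}, since that result presupposes the assumption. Instead we redo the first step of its proof, which uses only Lemma~\ref{lemma:martingale-property}. For $u\in\mathcal{S}$ and $t\geq0$ this gives
\begin{equation*}
C_\nu(u)=\exp\Big(-\int_0^t\Vert P_su\Vert_2^2\,\dd s\Big)\int e^{i\langle P_tu,\phi\rangle}\,\nu(\dd\phi),
\end{equation*}
and therefore
\begin{equation*}
|C_\nu(u)|\le\exp\Big(-\int_0^t\Vert P_su\Vert_2^2\,\dd s\Big).
\end{equation*}
Now take $u=\lambda\mathbf 1_x$ and let $t\to\infty$. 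Since $\Vert P_s\mathbf 1_x\Vert_2^2=p_{2s}(x,x)$, we obtain
\begin{equation*}
|C_\nu(\lambda\mathbf 1_x)|\le \exp\big(-\lambda^2G(x,x)/2\big).
\end{equation*}
This immediately rules out recurrence: if $G(x,x)=\infty$, then $C_\nu(\lambda\mathbf 1_x)=0$ for every $\lambda\neq0$, which contradicts continuity of the characteristic function at $\lambda=0$. So $G(x,x)<\infty$ for all $x$.

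For tempered growth, the bound says that the law of $\phi(x)$ under $\nu$ has characteristic function dominated by that of a centred Gaussian with variance $G(x,x)$. Heuristically, $\phi(x)$ is then at least as spread out as that Gaussian. To make this quantitative, integrate $1-\mathrm{Re}\,C$ over $\lambda\in[-1/a,1/a]$, using the standard tail inequality. With $a$ chosen as a multiple of $\sqrt{G(x,x)}$, this yields constants $c,\kappa>0$ such that
\begin{equation*}
\nu\big(|\phi(x)|\ge c\sqrt{G(x,x)}\big)\ge\kappa\qquad\text{uniformly in }x.
\end{equation*}

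The main obstacle is turning this uniform lower bound into the statement that $\sqrt{G(\cdot,\cdot)}$ is polynomially bounded. Since $\nu(\mathcal{S}')=1$ and $\mathcal{S}'=\bigcup_M\{\Vert\phi\Vert_{H_{-M}}\le M\}$, we can choose $M$ with $\nu(A)>1-\kappa/2$, where $A=\{\Vert\phi\Vert_{H_{-M}}\le M\}$. On $A$ we have $|\phi(x)|\le M\jap{x}^M$ for every $x$. Suppose $c\sqrt{G(x,x)}>M\jap{x}^M$. Then the event $\{|\phi(x)|\ge c\sqrt{G(x,x)}\}$ is disjoint from $A$, so its probability is at most $\kappa/2$, contradicting the lower bound $\kappa$. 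Hence
\begin{equation*}
G(x,x)\le c^{-2}M^2\jap{x}^{2M}\qquad\text{for all }x,
\end{equation*}
that is, $x\mapsto G(x,x)$ lies in $\mathcal{S}'$. This also covers Theorem~\ref{theorem:nonexistence-stationary}.
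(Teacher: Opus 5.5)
Your proof is correct. For the main step, a tempered bound on $G(x,x)$ when it is finite, you take a genuinely different route from the paper.

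\textbf{Where you agree with the paper.} The sufficiency direction and the recurrent case coincide with the paper's argument. The paper also kills $G(x,x)=\infty$ by letting $t\to\infty$ in $\mu[\e^{ir\psi(x)}]=\mu[T_t\e^{ir\cdot(x)}(\psi)]$. Its support check $\mathbb{E}[\Vert X_0\Vert_{H_{-N}}^2]=\Vert\sqrt G\Vert_{H_{-N}}^2$ is exactly your second-moment computation.

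\textbf{Where you differ.} For $G(x,x)<\infty$ but not tempered, the paper proceeds in two steps:
\begin{enumerate}
\item It reruns the disintegration of Proposition~\ref{proposition:gaussian-disintegration} on $\R^{\Cluster}$, without Assumption~\ref{assumption:green_function_is_tempered}, to write any invariant law as a mixture of $\mathcal N(\psi,G)$.
\item It proves a $0$--$1$ law for each component through the Gaussian Laplace-transform bound
\[
\mathbb E\Big[\exp\Big(-\sum_{x\in\Lambda_n}\jap{x}^{-2N}|\psi(x)+X_0(x)|^2\Big)\Big]\le\Big(1+2\sum_{x\in\Lambda_n}\jap{x}^{-2N}G(x,x)\Big)^{-1/2}.
\]
This shows every component gives zero mass to each $H_{-N}$, hence to $\mathcal S'$.
\end{enumerate}
You instead use only the one-site bound $|C_\nu(\lambda\mathbf 1_x)|\le\e^{-\lambda^2G(x,x)/2}$. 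You turn it into anti-concentration at scale $\sqrt{G(x,x)}$, uniformly in $x$. You then play this against tightness of $\nu$ on the increasing sets $\{\Vert\phi\Vert_{H_{-M}}\le M\}$, whose union is $\mathcal S'$.

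\textbf{What each route buys.} Your argument needs only one-dimensional marginals. It avoids redoing the disintegration on $\R^{\Cluster}$, which the paper only sketches. It also yields an explicit bound $G(x,x)\le c^{-2}M^2\jap{x}^{2M}$, with $M$ read off from the tightness of $\nu$. The paper's route gives more structure: any candidate invariant law is a Gaussian mixture, and each component charges no $H_{-N}$ at all.

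\textbf{One correction of wording.} The standard tail inequality $\nu(|\phi(x)|\ge 2a)\le a\int_{-1/a}^{1/a}(1-C_\nu(\lambda\mathbf 1_x))\,\dd\lambda$ bounds tails from above. That is the wrong direction for anti-concentration. What you actually need is the identity behind it:
\[
\frac a2\int_{-1/a}^{1/a}\big(1-\mathrm{Re}\,C_\nu(\lambda\mathbf 1_x)\big)\,\dd\lambda=\int\Big(1-\frac{\sin(\phi(x)/a)}{\phi(x)/a}\Big)\,\nu(\dd\phi).
\]
With $a=\sqrt{G(x,x)}$, use $\mathrm{Re}\,C\le|C|$ to bound the left side below by the universal constant $\beta=\frac12\int_{-1}^1(1-\e^{-s^2/2})\,\dd s>0$. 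Bound the right side above by $c^2/6+2\,\nu(|\phi(x)|\ge ca)$, using $1-\frac{\sin y}{y}\le\min\{y^2/6,2\}$. Taking $c^2<3\beta$ gives your $\kappa=(\beta-c^2/6)/2$; Esseen's concentration inequality would also work.
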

Theorem \ref{theorem:nonexistence-stationary} is a simple consequence of Proposition \ref{prop:non-existence}. 

\begin{proof}[Proof of Proposition~\ref{prop:non-existence}]
    First suppose that \(G(x, x) = \infty\) for some (equivalently, all) \(x \in \Cluster\), and suppose \(\mu\) is a \((T_t)_{t \geq 0}\)-invariant probability measure on \(\mathcal{S}'\).
    Then, the same calculation as in the proof of Proposition~\ref{proposition:gaussian-disintegration} shows
    \begin{align*}
        \mu[\e^{i r \psi(x)}]
        = \mu[T_t\e^{i r \cdot(x)}(\psi)]
        \xrightarrow[t \to \infty]{} 0
    \end{align*} for \(r \in \mathbb{R}\setminus\{0\}\). Hence, such a \(\mu\) can indeed not exist.

    Now suppose that \(G(x, x) < \infty\) for all (equivalently, some) \(x \in \Cluster(\omega)\), but that \(x \mapsto G(x, x)\) is not an \(\mathcal{S}'\)-function.
    Then, the same calculation as in the proof of Proposition~\ref{proposition:gaussian-disintegration} now just considering all probability measures simply on functions on \(\Cluster(\omega)\) as opposed to \(\mathcal{S}'\)-functions, shows that a \((T_t)_{t \geq 0}\)-invariant measure has to be a mixture of Gaussian processes \(X^{\psi} \sim \mathcal{N}(\psi, G)\) with covariance function \(G\) on \(\Cluster(\omega)\). Fix some deterministic function \(\psi \colon \Cluster \to \mathbb{R}\).
    Then we have
    \begin{align}\label{equation:dichotomy_H_N_deterministic_plus_Gaussian}
        \mathbb{P}(X^\psi \in H_{-N}) 
        = \mathbb{P}(\psi + X_0 \in H_{-N})
        = \begin{cases}
            1, &(x \mapsto \sqrt{G}(x, x)) \in H_{-N} \text{ and } \psi \in H_{-N}, \\
            0, &\text{otherwise},
        \end{cases}
    \end{align} so that, since G has the property of not being a function in \(\mathcal{S}' = \bigcup_{N \geq 0} H_{-N}\), we also have
    \begin{align*}
        \mathbb{P}(X^\psi \in \mathcal{S}') = 0.
    \end{align*} It follows that \(\mu\) cannot be a probability measure supported on \(\mathcal{S}'\).
    Let us finally show that \eqref{equation:dichotomy_H_N_deterministic_plus_Gaussian} indeed holds. That \(\mathbb{P}(\psi + X_0 \in H_{-N}) = 1\) when \((x \mapsto \sqrt{G}(x, x)) \in H_{-N}\) and \(\psi \in H_{-N}\) is clear, since then even 
    \begin{align*}
        \mathbb{E}[\Vert X_0 \Vert_{H_{-N}}^2]
        = \sum_{x} \jap{x}^{-2N} \mathbb{E}[\vert X_0(x) \vert^2]
        =  \sum_{x} \jap{x}^{-2N} G(x, x)
        = \Vert \sqrt{G} \Vert_{H_{-N}}^2
        < \infty.
    \end{align*}
    The other half follows from the fact that, for any exhaustion with finite subsets \(\Lambda_n \Subset \Cluster\), we have by the multivariate Gaussian distribution of \((X_0(x))_{x \in \Lambda_n}\) that
    \begin{align*}
        \mathbb{E}\Big[\exp\Big(-\sum_{x \in \Lambda_n} \jap{x}^{-2N} \vert \psi(x) + X_0(x) \vert^2\Big) \Big]
        \leq \big(1 + 2 \sum_{x \in \Lambda_n} \jap{x}^{-2N} G(x, x) \big)^{-1/2}
    \end{align*} so that, if \(\Vert \sqrt{G} \Vert_{H_{-N}} = \infty\),
    \begin{align*}
        \mathbb{E}[\exp(-\Vert X^\psi \Vert_{H_{-N}}^2)]
        \leq \liminf_{n \to \infty} \mathbb{E}\Big[\exp\Big(-\sum_{x \in \Lambda_n} \jap{x}^{-2N} \vert \psi(x) + X_0(x) \vert^2\Big) \Big]
        = 0.
    \end{align*}
\end{proof}

\section{Stationary measures for the heat semigroup}\label{sec:random-walk-semigroup}
In this section, we want to discuss the possibility of having \((P_t)_{t \geq 0}\)-stationary measures \(\varrho\) on \(\mathcal{S}'\) with the property that
\begin{align*}
    \varrho(\{\xi \in \mathcal{S}' \,\colon\,  \Delta \xi = 0 \}) 
    < 1.
\end{align*}
The main goals of this section are to prove Theorem \ref{theorem:stationary-iff-reversible}, Theorem \ref{theorem:non-reversible-periodic}, and Theorem \ref{theorem:spectral-characterisation}. 

We start by deriving some general results on the stationary measures for the random walk semigroup $(P_t)_{t\geq0}$ in Section \ref{sec:general-criteria} and then first prove Theorem \ref{theorem:stationary-iff-reversible} in the special case of uniformly bounded conductances in Section \ref{sec:bounded-weights} before proceeding to the case of unbounded conductances that grow subquadratic in Section \ref{sec:unbounded-weights}. The construction of a particular set of conductances for Theorem \ref{theorem:non-reversible-periodic} will be carried out in Section \ref{section:non-reversible-stationary-measures}.

\subsection{General criteria}\label{sec:general-criteria}
\subsubsection{Vanishing higher powers of the Laplacian}
\begin{lemma}[Support in the harmonic functions if the heat semigroup is nice]
    Suppose the heat semigroup \((P_t)_{t \geq 0}\) has the property that for every \(\psi \in \mathcal{S}'\) there is an \(m\) which only depends on \(N := \min\{M \geq 0 \colon \psi \in H_{-M} \}\) and such that
    \begin{align}
        (\Delta^m P_t \psi)(x)
        \xrightarrow[t \to \infty]{} 0, \quad \text{pointwise for all } x.
    \end{align} 
    Then,  every \((P_t)_{t \geq 0}\)-stationary probability measure \(\varrho\) on \(\mathcal{S}'\) satisfies
    \begin{align*}
        \varrho(\{\xi \in \mathcal{S}' \,\colon\,  \Delta \xi = 0 \}) 
        = \varrho(\{\xi \in \mathcal{S}' \,\colon\,  P_t \xi = \xi \,\forall t \geq 0 \}) 
        = 1.
    \end{align*}
\end{lemma}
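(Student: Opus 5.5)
The plan is to exploit stationarity in the form ``$\xi$ and $P_t\xi$ have the same law under $\varrho$'' and apply it to suitable measurable functionals of $\xi$. First I reduce to showing $\Delta^m\xi=0$ $\varrho$-a.s. for a suitable (random) $m$. Then I peel off one power of $\Delta$ at a time until only $\Delta\xi=0$ remains. Finally I pass from harmonicity to $P_t\xi=\xi$ for all $t$.

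\emph{Step 1: the index $N(\xi)$ is a.s.\ invariant.} Let $N(\xi):=\min\{M\geq 0\colon \xi\in H_{-M}\}$ and $B_M:=\{N\le M\}=H_{-M}$. By \eqref{eq:semigroup_norm_bound_tempered}, $P_t$ maps $H_{-M}$ into itself, so $B_M\subseteq P_t^{-1}(B_M)$. Stationarity gives $\varrho(P_t^{-1}B_M)=\varrho(B_M)$, so the two sets agree up to a $\varrho$-null set. Hence, for each fixed $t$, $N(P_t\xi)=N(\xi)$ for $\varrho$-a.e.\ $\xi$. Writing $m(\xi):=m(N(\xi))$ for the integer supplied by the hypothesis, we therefore also get $m(P_t\xi)=m(\xi)$ a.s.

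\emph{Step 2: $\Delta^{m(\xi)}\xi=0$ a.s.} Fix $x$ and set $f(s)=\e^{-\abs{s}^2}$. The map $\xi\mapsto \Delta^{m(\xi)}\xi(x)$ is measurable, since $\Delta$ acts pointwise by Lemma~\ref{lemma:mapping-properties-tempered-distributions}~ii. Stationarity and Step~1 give
\begin{align*}
\varrho\big[f(\Delta^{m(\xi)}\xi(x))\big]=\varrho\big[f(\Delta^{m(P_t\xi)}P_t\xi(x))\big]=\varrho\big[f(\Delta^{m(\xi)}P_t\xi(x))\big].
\end{align*}
By hypothesis, $\Delta^{m(\xi)}P_t\xi(x)\to0$ for every $\xi$. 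Dominated convergence then gives that the right-hand side tends to $1$ as $t\to\infty$. Since $f\le 1$ with equality only at $0$, we conclude that $\Delta^{m(\xi)}\xi(x)=0$ a.s., for every $x$ in the countable set $\Cluster$.

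\emph{Step 3: lowering the power.} Suppose $\Delta^k\xi=0$ a.s.\ for some $k\ge2$; by Step~2 this holds on each event $\{m=k\}$. Put $\eta:=\Delta^{k-1}\xi$, so $\Delta\eta=0$. By Lemma~\ref{lemma:mapping-properties-tempered-distributions}~iii (together with the commutation $P_t\Delta=\Delta P_t$ on $\mathcal{S}'$, obtained by duality from Lemma~\ref{lemma:the_semigroup_on_schwartz_space}), we have $\frac{\dd}{\dd t}P_t\eta=0$. Consequently $\frac{\dd}{\dd t}\Delta^{k-2}P_t\xi=-\eta$, and
\begin{align*}
\Delta^{k-2}P_t\xi(x)=\Delta^{k-2}\xi(x)-t\,\eta(x).
\end{align*}
Stationarity says the law of the left-hand side under $\varrho$ does not depend on $t$. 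On the event $\{\eta(x)\neq0\}$, however, the right-hand side diverges as $t\to\infty$. This contradicts tightness of a fixed law unless $\varrho(\eta(x)\neq0)=0$. Iterating over $k$ (and working on each event $\{m=k\}$, which is a.s.\ invariant by Step~1) yields $\Delta\xi=0$ a.s.

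\emph{Step 4: harmonic implies fixed.} For harmonic $\xi\in\mathcal{S}'$, the same derivative identity gives $\frac{\dd}{\dd t}P_t\xi=-P_t\Delta\xi=0$, so $P_t\xi=\xi$ for all $t$. Conversely, $P_t\xi=\xi$ for all $t$ forces $\Delta\xi=0$ by Lemma~\ref{lemma:mapping-properties-tempered-distributions}~iii, so the two events in the statement coincide.

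The main obstacle is that $m$ depends on $\xi$ through $N(\xi)$. This is exactly why Step~1 is needed: it ensures that $N$ is not changed by $P_t$ under $\varrho$. A second point to check carefully is the justification of the time-derivative identities on $\mathcal{S}'$, pointwise in $x$, which rests on the duality arguments of Lemma~\ref{lemma:mapping-properties-tempered-distributions}.
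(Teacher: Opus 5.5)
Your proof is correct and follows the paper's two-stage argument. First, stationarity plus dominated convergence give $\Delta^{m}\xi=0$ almost surely (Lemma~\ref{lemma:vanishing_higher_order_derivatives_from_pointwise_convergence}); then a nonconstant polynomial in $t$ cannot have a $t$-independent law (Lemma~\ref{lemma:invariant_support_from_vanishing_higher_order_derivatives}). The differences are only technical: you handle the $\xi$-dependence of $m$ through the almost sure $P_t$-invariance of $N(\xi)$, where the paper fixes $m$, bounds the bad event by $\varrho((H_{-N(m)})^c)$ and lets $m\to\infty$; and you lower the power of $\Delta$ one step at a time on the invariant events $\{m=k\}$, reaching $\Delta\xi=0$ directly and so avoiding the separability argument over $(t,w)$, where the paper uses the full Taylor expansion of $\langle P_t w,\xi\rangle$.
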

\begin{proof}
    This is just a combination of Lemma~\ref{lemma:invariant_support_from_vanishing_higher_order_derivatives} and Lemma~\ref{lemma:vanishing_higher_order_derivatives_from_pointwise_convergence} below.
\end{proof}

\begin{lemma}[Invariant support from vanishing higher-order derivatives]\label{lemma:invariant_support_from_vanishing_higher_order_derivatives}
    Let \(\varrho\) be a \((P_t)_{t \geq 0}\)-stationary probability measure on \(\mathcal{S}'\).
    If it satisfies
    \begin{align}\label{equation:support_in_vanishing_under_higher_order_derivative}
        \varrho\big( \{\xi \in \mathcal{S}' \,\colon\, \exists m \in \mathbb{N} \text{ s.t. } \Delta^m \xi = 0 \} \big)
        = 1
    \end{align} then it automatically satisfies
    \begin{align*}
        \varrho(\{\xi \in \mathcal{S}' \,\colon\,  \Delta \xi = 0 \}) 
        = \varrho(\{\xi \in \mathcal{S}' \,\colon\,  P_t \xi = \xi \,\forall t \geq 0 \}) 
        = 1.
    \end{align*}
\end{lemma}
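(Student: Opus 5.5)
The plan is to use that on $\{\Delta^m\xi=0\}$ the orbit $t\mapsto P_t\xi$ is a polynomial in $t$. Stationarity of $\varrho$ is incompatible with a non-constant polynomial orbit on a set of positive mass.

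\emph{Step 1 (polynomial orbits).} Fix $\xi\in\mathcal{S}'$ with $\Delta^m\xi=0$, say $\xi\in H_{-N}$. By Lemma~\ref{lemma:mapping-properties-tempered-distributions} and Remark~\ref{remark:C0_semigroup_on_H_minus_N_and_analyticity}, the map $t\mapsto P_t\xi$ is differentiable in $\mathcal{S}'$, with derivative $-\Delta P_t\xi=-P_t\Delta\xi$. The identity $P_t\Delta\xi=\Delta P_t\xi$ follows by duality from Lemma~\ref{lemma:the_semigroup_on_schwartz_space}~iii.
\begin{itemize}
\item Iterating gives $\frac{\mathrm{d}^k}{\mathrm{d}t^k}P_t\xi=(-1)^kP_t\Delta^k\xi$, with values in $H_{-(N+2k)}$.
\item In particular the $m$-th derivative vanishes identically.
\item Taylor's formula, applied pointwise via $\mathbf{1}_x\in\mathcal{S}$, yields
\[
(P_t\xi)(x)=\sum_{k=0}^{m-1}\frac{(-t)^k}{k!}(\Delta^k\xi)(x).
\]
\end{itemize}
For $m=1$ this says that harmonic $\xi$ satisfy $P_t\xi=\xi$ for all $t$. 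Hence it suffices to show $\varrho(\Delta\xi=0)=1$, which also yields the second equality of the claim. The reverse inclusion, that $P_t\xi=\xi$ for all $t$ implies $\Delta\xi=0$, follows from Lemma~\ref{lemma:mapping-properties-tempered-distributions}~iii.

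\emph{Step 2 (an invariant linear drift).} Suppose $\Delta^m\xi=0$ with $m\ge 2$. Then $\eta:=\Delta^{m-1}\xi$ is harmonic, so by Step~1
\[
\Delta^{m-1}P_t\xi=P_t\eta=\eta .
\]
Fix $x\in\mathcal{C}$ and define the measurable functionals
\[
c(\xi):=(\Delta^{m-1}\xi)(x),\qquad g(\xi):=(\Delta^{m-2}\xi)(x).
\]
On $A_m:=\{\Delta^m\xi=0\}$ we then have
\[
c(P_t\xi)=c(\xi),\qquad g(P_t\xi)=g(\xi)-t\,c(\xi).
\]
The set $A_m$ itself is $P_t$-invariant, because $\Delta^mP_t\xi=P_t\Delta^m\xi$.

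\emph{Step 3 (contradiction with stationarity).} Suppose $\delta:=\varrho(A_m\cap\{c\neq 0\})>0$. For every $K>0$, stationarity gives
\[
\varrho(|g|>K)=\varrho(|g\circ P_t|>K)\ge\varrho\big(A_m\cap\{c\ne0\}\cap\{|g-tc|>K\}\big).
\]
As $t\to\infty$ the right-hand side tends to $\delta$ by dominated convergence, since $|g-tc|\to\infty$ on $\{c\neq0\}$. So $\varrho(|g|>K)\ge\delta$ for all $K$. This contradicts $\varrho(|g|<\infty)=1$ as $K\to\infty$.

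Hence for every $m\ge2$ and every $x$, $\varrho$-a.e.\ $\xi\in A_m$ has $(\Delta^{m-1}\xi)(x)=0$. Taking the countable intersection over $x$ gives $\varrho(A_m\setminus A_{m-1})=0$. By downward induction on $m$, $\varrho(A_m\setminus A_1)=0$ for all $m$. Together with \eqref{equation:support_in_vanishing_under_higher_order_derivative} this gives $\varrho(A_1)=1$.

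I expect the only delicate point to be Step~1: justifying the iterated time derivatives and the commutation $\Delta P_t=P_t\Delta$ on $\mathcal{S}'$ with the loss of two weights per derivative. This is handled via duality with Lemma~\ref{lemma:the_semigroup_on_schwartz_space} and the pointwise evaluations against $\mathbf{1}_x$. Measurability of $A_m$, $c$ and $g$ is clear, since they are defined by countably many pointwise conditions and finite sums by nearest-neighbour locality.
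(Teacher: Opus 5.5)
Your proof is correct and follows essentially the paper's route: on $\{\Delta^m\xi=0\}$ the orbit $t\mapsto P_t\xi$ is polynomial in $t$, and stationarity (tightness of the law of a fixed evaluation functional) rules out a non-constant polynomial on a set of positive $\varrho$-mass. The differences are organisational. You run a downward induction on the nilpotency order using only the linear top-order term $g\circ P_t=g-tc$, tested against the countably many $\mathbf{1}_x$. This spells out the tightness argument that the paper compresses into ``so necessarily''. It also replaces the paper's continuity and separability argument over $(t,w)\in[0,\infty)\times\mathcal{S}$ by a direct deduction of $P_t\xi=\xi$ from harmonicity.
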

\begin{proof}
    By construction we have for all $\xi \in \mathcal{S}'(\Z^d)$ and $w\in \mathcal{S}$
    \begin{align}
        \langle P_t w, \xi \rangle - \langle w, \xi \rangle = - \int_0^t \langle \Delta P_s w, \xi \rangle \dd s, \quad t \geq 0. 
    \end{align}
    By induction one can show that 
    \begin{align}
        \langle P_tw,\xi\rangle-\langle w,\xi\rangle=\sum_{k=1}^{n-1}\frac{(-t)^k}{k!}\langle\Delta^kw,\xi\rangle+\frac{(-1)^n}{(n-1)!}\int_0^t(t-s)^{n-1}\langle\Delta^nP_sw,\xi\rangle\ \dd s.
    \end{align}
    If $\xi$ is such that $\Delta^n \xi \equiv 0$ for some $n \in \N$, then this implies that the left hand side, $\langle P_t w, \xi \rangle - \langle w, \xi \rangle$, is a polynomial in $t$ with vanishing constant term. 
    In particular, if there is an $s\geq 0$ such that $\langle P_s w, \xi \rangle \neq \langle w , \xi \rangle$, then we must necessarily have 
    \begin{align*}
        \abs{\langle P_t w, \xi \rangle - \langle w , \xi \rangle } \to \infty \quad \text{as } t \to \infty.  
    \end{align*}
    But since $\varrho(\cdot)$ is a time-stationary measure for $(P_t)_{t\geq 0}$, we actually have 
    $$
        \text{Law}\left(\langle P_t w, \xi \rangle\right) = \text{Law}\left(\langle w, \xi \rangle\right) 
    $$
    under $\varrho$. So necessarily $\langle P_t w, \xi \rangle = \langle w, \xi \rangle$ with $\varrho$-probability $1$ for all fixed $t \geq 0$. To move the quantifier over $t\geq 0$ inside, use that the map
    \begin{align*}
        [0,\infty) \times \mathcal{S} \ni (t, w) \mapsto P_t w \in \mathcal{S}
    \end{align*}
    is continuous and the fact that both $[0,\infty)$ and $\mathcal{S}$ are separable, to conclude that 
    \begin{align*}
        \varrho(\{\xi \in \mathcal{S}' \,\colon\,  \Delta \xi = 0 \}) 
        = \varrho(\{\xi \in \mathcal{S}' \,\colon\,  P_t \xi = \xi \,\forall t \geq 0 \}) 
        = 1,
    \end{align*}
    as claimed. 
\end{proof}

\begin{lemma}[Vanishing higher-order derivatives from pointwise convergence to zero]\label{lemma:vanishing_higher_order_derivatives_from_pointwise_convergence}
    Suppose the heat semigroup \((P_t)_{t \geq 0}\) has the property that for every \(\psi \in \mathcal{S}'\) there is an \(m\), which only depends on \(N := \min\{M \in \mathbb{N}_0 \colon \psi \in H_{-M} \}\), such that \eqref{equation:pointwise_converge_to_zero_higher_order_derivative} holds.
    Then the Assumption \eqref{equation:support_in_vanishing_under_higher_order_derivative} in Lemma~\ref{lemma:invariant_support_from_vanishing_higher_order_derivatives} is automatically satisfied.
\end{lemma}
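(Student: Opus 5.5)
Given a $(P_t)_{t\geq0}$-stationary $\varrho$ on $\mathcal{S}'$, the plan is to show that $\varrho$-almost every $\xi$ satisfies $\Delta^m \xi = 0$ for some $m$. This is exactly \eqref{equation:support_in_vanishing_under_higher_order_derivative}.

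The first step is to localise in $N$. Write $\mathcal{S}' = \bigcup_{N} A_N$ with $A_N := \{\xi \colon \min\{M\colon \xi\in H_{-M}\} = N\}$. Each $A_N$ is measurable, since $\xi\mapsto\Vert\xi\Vert_{H_{-M}}\in[0,\infty]$ is measurable. By \eqref{eq:semigroup_norm_bound_tempered}, $P_t$ maps $H_{-M}$ into itself, so $\xi \in H_{-M}$ implies $P_t\xi\in H_{-M}$. Hence $\min\{M\colon P_t\xi\in H_{-M}\}\leq \min\{M\colon \xi\in H_{-M}\}$. Because $\varrho$ is stationary, the events $B_N := \{\xi\in H_{-N}\}$ satisfy $P_t^{-1}B_N\supseteq B_N$ and $\varrho(P_t^{-1}B_N)=\varrho(B_N)$. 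So $B_N$ is $\varrho$-a.s.\ invariant, and hence so is each $A_N$. If $\varrho(A_N)>0$, the normalised restriction $\varrho_N:=\varrho(\cdot\cap A_N)/\varrho(A_N)$ is again stationary, with the caveat that invariance holds only up to null sets, which suffices. It is therefore enough to treat one $N$ and the corresponding $m=m(N)$ from the hypothesis.

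The second step fixes $x\in\Cluster$ and $m=m(N)$ and shows $(\Delta^m\xi)(x)=0$ for $\varrho_N$-a.e.\ $\xi$. The map $\xi\mapsto (\Delta^m \xi)(x)$ is a finite linear combination of coordinates, hence measurable. By stationarity, $(\Delta^m P_t\xi)(x)$ under $\varrho_N$ has the same law as $(\Delta^m\xi)(x)$ for every $t$. The hypothesis gives $(\Delta^m P_t \xi)(x)\to0$ as $t\to\infty$ for every $\xi \in A_N$. Pointwise convergence implies convergence in law, so the law of $(\Delta^m\xi)(x)$ is $\delta_0$. For this we use $\mathbb{E}[\e^{ir(\Delta^m P_t\xi)(x)}]\to 1$ by dominated convergence, while the left-hand side is independent of $t$. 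Intersecting over the countably many $x$, we get $\Delta^m\xi=0$ $\varrho_N$-a.s.

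The final step sums over $N$. This gives $\varrho(\exists m\colon \Delta^m\xi=0)=\sum_N\varrho(A_N)=1$, which is \eqref{equation:support_in_vanishing_under_higher_order_derivative}.

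The only delicate point is the first step: the hypothesis provides an $m$ depending on $N$ and not uniform in $\xi$, so one must make sure that stationarity is used within a fixed class $A_N$. The monotonicity of the $H_{-N}$ classes under $P_t$ together with measure preservation makes that class almost-surely invariant, which is what the argument needs. Alternatively, one can avoid restricting $\varrho$: apply stationarity to the event $\{(\Delta^{m(N)}\xi)(x)\in U\}\cap B_N$. Since $P_t^{-1}B_N\supseteq B_N$ with equal measure, $\varrho$-a.e.\ $\xi \in B_N$ has $P_t\xi\in B_N$, and the same limit argument applies on $B_N$. This is enough because a function in $H_{-N}$ also lies in $H_{-N'}$ for $N'\ge N$.
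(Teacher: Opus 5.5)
Your proof is correct, but it is organised differently from the paper's. The paper never restricts $\varrho$. It writes $\varrho(\forall m\colon \Delta^m\psi\neq 0)=\inf_m\sup_{R,\delta}\varrho(\max_{x\in B_R}|\Delta^m\psi(x)|>\delta)$ and uses stationarity of $\varrho$ itself to replace $\psi$ by $P_t\psi$ inside this event. It then splits according to whether the \emph{initial point} $\psi$ lies in $H_{-N(m)}$. Bounded convergence on $\{\psi\in H_{-N(m)}\}$ gives the bound $\varrho(H_{-N(m)}^c)$, which tends to $0$ as $m\to\infty$.

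Because the hypothesis is applied to $\psi$ rather than to $P_t\psi$, the paper needs neither the invariance of $H_{-N}$ under $P_t$ nor stationarity of a restricted measure. The price is that the estimate is one-sided and only closes after sending $m\to\infty$. It also implicitly uses that $N(m)\to\infty$, which requires that convergence for $\Delta^m$ passes to $\Delta^{m'}$ for $m'\ge m$, true since $\Delta$ is nearest-neighbour.

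Your route instead uses \eqref{eq:semigroup_norm_bound_tempered} to show that the exact-order classes $A_N$ are $\varrho$-a.s.\ invariant. Hence $\varrho_N$ is stationary, and you identify the law of $(\Delta^{m(N)}\xi)(x)$ as $\delta_0$ exactly. This gives the slightly sharper conclusion that $\Delta^{m(N)}\xi=0$ for $\varrho$-a.e.\ $\xi$ of order $N$, with a uniform power per class. It also uses the hypothesis exactly as stated, with no monotonicity in $m$ required.

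One caveat concerns only your closing alternative. Elements of $B_N=H_{-N}$ have order $N'\le N$, so the hypothesis supplies $m(N')$, not $m(N)$. To run the limit argument on all of $B_N$, you must replace $m(N)$ by $\max_{N'\le N}m(N')$. You then also need that pointwise convergence of $\Delta^mP_t\psi$ to zero implies the same for $\Delta^{m'}P_t\psi$ with $m'\ge m$. Your stated justification, the inclusion $H_{-N}\subseteq H_{-N'}$ for $N'\ge N$, does not address this. Your main argument via the classes $A_N$ is unaffected.
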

\begin{proof}
    We have
    \begin{align}\label{equation:probability_of_non_vanishing_higher_order_derivative}
        \varrho\big( \{\psi \in \mathcal{S}' \,\colon\, \forall m \in \mathbb{N} \text{ it is } \Delta^m \psi \neq 0 \} \big)
        = \inf_{m \in \mathbb{N}} \sup_{R \geq 0} \sup_{\delta > 0} \varrho\big(\max_{x \in \Cluster \cap B_R} \vert \Delta^m \psi(x)\vert > \delta \big),
    \end{align} where here \(B_R := \{x \,\colon\, \jap{x} \leq R\}\).
    Define 
    \begin{align*}
        N(m) := \sup\{ N \geq 0 \,\colon\, \eqref{equation:pointwise_converge_to_zero_higher_order_derivative} \text{ holds for all } \psi\in H_{-N} \}
    \end{align*} and notice that \(N(m) \to \infty\) as \(m \to \infty\) by assumption. First suppose that $N(m)$ is finite for all $m$. 
    Now, for fixed \(m \geq 0\) it holds by stationarity, and \eqref{equation:pointwise_converge_to_zero_higher_order_derivative} together with bounded convergence, that
    \begin{align*}
        &\varrho\big(\max_{x \in \Cluster \cap B_R} \vert \Delta^m \psi(x)\vert > \delta \big)
        = \limsup_{t \to \infty} \varrho\big[\mathbf{1}_{(\delta, \infty)}(\max_{x \in \Cluster \cap B_R} \vert \Delta^m P_t \psi(x)\vert) \big] \\
        &\leq \limsup_{t \to \infty} \varrho\big[\mathbf{1}_{(\delta, \infty)}(\max_{x \in \Cluster \cap B_R} \vert \Delta^m P_t \psi(x)\vert)  \, \mathbf{1}_{H_{-N(m)}}(\psi) + \mathbf{1}_{(H_{-N(m)})^c}(\psi) \big] \\
        &= \varrho((H_{-N(m)})^c). 
    \end{align*} Hence, the r.h.s. of \eqref{equation:probability_of_non_vanishing_higher_order_derivative} vanishes, which in turn yields \eqref{equation:support_in_vanishing_under_higher_order_derivative}. If $N(m)$ may also already be equal to infinity for some finite $m$, but in that case  $H_{-\infty} = \emptyset$, so the same argument works. 
\end{proof}

\subsubsection{(Generalized) spectrum of the Laplacian}
Here we prove the spectral characterisation of the occurrence of non-fixing $(P_t)_{t\geq0}$-stationary measures given in Theorem \ref{theorem:spectral-characterisation}. 
\begin{proof}[Proof of Theorem~\ref{theorem:spectral-characterisation}]
    By \cite[Theorem]{Flytzanis1995} resp. \cite[Lemma 4.4]{GrivauxMartinez2023}, applied to the Hilbert space \(H_{-N}\) and \(P_T\) as a bounded linear map on it, and \(\varrho\) as \(P_T\)-invariant measure,  we have that
    \begin{align*}
        \mathrm{supp}(\varrho)
        \subseteq \overline{\mathrm{span}\{\psi \in H_{-N} \,\colon\, \text{there is a } \lambda \in \mathbb{C}, \vert \lambda\vert = 1 \text{ with } P_T \psi = \lambda \psi\}}.
    \end{align*}
    Hence, there exists \(\lambda = \e^{i \theta}\) and \(0 \neq \psi \in H_{-N}\) with \(P_T \psi = \lambda \psi\) and because \(\varrho\big(\{\xi \in \mathcal{S}' \,\colon\, P_T \xi = \xi \} \big) < 1\), we can choose it to be \(\lambda \neq 1\).

    That there is indeed a \(\theta \in \mathbb{R}\setminus\{0\}\) and \(\xi \in H_{-N}\) with \(P_t \xi = \e^{i\theta t} \xi\) for all \(t \geq 0\), or equivalently \(\Delta \xi = -\theta i  \xi\), is a consequence of spectral mapping with respect to the point spectrum of a \(C_0\) semigroup, cf. \cite[3.7 Spectral Mapping Theorem for Point and Residual Spectrum]{engel_one-parameter_2000}. That \((P_t)_{t \geq 0}\) is a \(C_0\)-semigroup on \(H_{-N}\) is proven in Remark~\ref{remark:C0_semigroup_on_H_minus_N_and_analyticity}.

    One can prove the converse using the same construction as in Lemma~\ref{lemma:purely_imaginary_eigenvectors_in_S_prime} below. 
\end{proof}

\begin{remark}[Support on \(H_{-N}\)]\label{remark:support_on_H_minus_N}
    Under only the assumption (1), one also gets the existence of some fixed \(N > 0\) such that 
    \begin{align*}
        \mu(A) = \frac{1}{\varrho(X_{-N})} \varrho(A \cap X_{-N})
    \end{align*} is well-defined for \(X_{-N} = \{\xi \in \mathcal{S}' \colon \mathrm{ord}(\xi) = N\}\), \(\mathrm{ord}(\xi) = \min\{M \geq 0 \,\colon\, \xi \in H_{-M}\}\), and such that \(\mu = \mu \circ P_T^{-1}\) and  \(\mu\big(\{\xi \in \mathcal{S}' \,\colon\, P_T \xi = \xi \} \big) < 1\). 
    This is essentially because \(P_T\) maps \(H_{-N}\) into \(H_{-N}\), which then forces \(\varrho(X_{-N} \cap P_T^{-1} X_{-N}) = \varrho(X_{-N})\) for all \(N \geq 0\).
    The same measure restriction then holds for \(H_{-N}\) in place of \(X_{-N} \subseteq H_{-N}\). 
    Hence, it is really just the moment \(\mu[\Vert \xi\Vert_{H_{-N}}^2] < \infty\), not the support assumption \(\mathrm{supp}(\mu) \subseteq H_{-N}\), that is missing to close the general case. Nevertheless, this is not only a technical problem; see \cite{Steenbeck2026} for why at least for general bounded linear operators on separable complex Hilbert spaces, non-trivial invariant probability measures do not have to imply existence of corresponding eigenvectors.
\end{remark}
\begin{remark}[Approximate point-spectrum of \(\Delta\) on \(H_{-N}\)]\label{remark:approximate_point_spectrum_of_Delta_on_H_minus_N}
    The existence of non-trivial \(P_T\)-invariant probability measures on \(H_{-N}\) implies the existence of purely-imaginary \emph{approximate} point spectrum of \(\Delta\) on \(H_{-N}\), i.e.\,\ \(\sigma_{\mathrm{ap}}(\Delta; H_{-N}) \cap i \mathbb{R}\setminus\{0\} \neq \emptyset\).
    Indeed, by Remark~\ref{remark:C0_semigroup_on_H_minus_N_and_analyticity}, \((P_t)_{t \geq 0}\) is even an analytic semigroup, so that the spectral mapping theorem \cite[Chapter IV, Corollary 3.12]{engel_one-parameter_2000} for approximate point-spectrum holds:
    \begin{align*}
        \sigma_{\mathrm{ap}}(P_t; H_{-N}) \setminus\{0\}
        = \e^{-t \,\sigma_{\mathrm{ap}}(\Delta; H_{-N})}, \quad t \geq 0.
    \end{align*}
    In particular, if \(\Delta\) on \(H_{-N}\) has no purely-imaginary spectrum \(\sigma_{\mathrm{ap}}(\Delta; H_{-N}) \cap i \mathbb{R}\setminus\{0\} = \emptyset\), it follows that \(\sigma_{\mathrm{ap}}(P_T; H_{-N}) \,\cap\, \mathbb{S}^1 \setminus\{1\} = \emptyset\).
    Now we can apply the general theorem \cite[Theorem 1.5]{Steenbeck2026} that a bounded linear map on a separable complex Banach space can only have non-fixing invariant probability measures if it has approximate point-spectrum on the unit circle \(\mathbb{S}^1\) minus \(1\).

    At the moment, we don't know if the approximate point-spectrum can be used to construct non-fixing stationary measures or if there automatically has to be purely-imaginary point spectrum for \(\Delta\).
\end{remark}

\subsection{Bounded weights}\label{sec:bounded-weights}

Here, we will give a somewhat probabilistic proof for the fact that there are no non-fixing stationary measures for \((P_t)_{t \geq 0}\) if the conductances are uniformly bounded. The statement itself also follows from the more general Theorem~\ref{theorem:stationary_measures_under_subquadratic_bounds}, where subquadratic conductances are allowed. Nevertheless, the corresponding proof draws on the same ideas of balancing decay of powers of the Laplacian versus growth of moments in time, despite being more analytic in nature, so that we may present them in a simpler setting first.
\begin{proof}
    We will see that under bounded conductances, the criterion Lemma~\ref{lemma:support_in_harmonic_functions_if_heat_semigroup_is_nice} is fulfilled, i.e.\, that for every \(\psi \in \mathcal{S}'\) there is an \(m\) which only depends on \(N := \min\{M \geq 0 \colon \psi \in H_{-M} \}\) and such that
    \begin{align}\label{equation:pointwise_converge_to_zero_higher_order_derivative_copy}
        (\Delta^m P_t \psi)(x)
        \xrightarrow[t \to \infty]{} 0, \quad \text{pointwise for all } x.
    \end{align} 
    
    Now, of course, one can always represent the variable-speed random walk \((X_t)_{t \geq 0}\) via the discrete-time random walk with transition probabilities proportional to the edge conductances and having exponential waiting time with parameter \(\sum_{y \sim x} \omega(x, y)\)  
    at the vertex \(x\) until the next jump, but it will be helpful for us to separate path and waiting times a little bit.
    For that, let \(\mathfrak{w} := \sup_x \sum_{y \sim x} \omega(x, y)\) and consider the lazy discrete-time random walk \((Y_n)_{n \in \mathbb{N}_0}\) which has one-step transition probabilities
    \begin{align*}
        P(Y_{n+1} = y \,\vert\, Y_n = x)
        = \begin{cases}
            \frac{\omega(x, y)}{\mathfrak{w}}, & y \sim x, \\
            1 - \frac{\sum_{z \sim x} \omega(x, z)}{\mathfrak{w}}, &y = x.
        \end{cases}
    \end{align*} Then, we can reconstruct (the distribution of) \((X_t)_{t \geq 0}\) by having \((Y_n)_{n \in \mathbb{N}_0}\) as embedded Markov chain and having a mean waiting time \(1 / \mathfrak{w}\) between jumps.
    That means we can write \(X_t = Y_{N_t}\) for a \(\mathrm{Poisson}(\mathfrak{w}t)\)-distributed random variable \(N_t\) independent of \(Y\), in particular it holds for \(\psi \in \mathcal{S}'\) that
    \begin{align*}
        (P_t \psi)(x)
        = E_x[\psi(Y_{N_t})]
        = \sum_{n = 0}^{\infty} \e^{- \mathfrak{w} t} \frac{(\mathfrak{w}t)^n}{n!} E_x[\psi(Y_n)],
    \end{align*} 
    and more generally
    \begin{align}\label{eqn:poissonisation}
        ((-\Delta)^m P_t \psi)(x)
        = \Big(\frac{\dd^m}{\dd t^m} P_t \psi \Big) (x)
        = \sum_{n = 0}^{\infty} \Big( \frac{\dd^m}{\dd t^m} \e^{- \mathfrak{w} t} \frac{(\mathfrak{w}t)^n}{n!}\Big) E_x[\psi(Y_n)]
    \end{align} for \(m \geq 0\).
    
    Let us now derive nicer expressions for the derivatives of the Poisson weights \(\frac{\dd^m}{\dd t^m} \e^{- t} \frac{t^n}{n!}\). These can be conveniently expressed in terms of the Charlier orthogonal polynomials, see e.g.\ \cite[Chapter 10]{peccati_wiener_2011} or \cite[Section 3]{last_poisson_2011}, given by the recurrence relation $c_0(n,t) = 1$ and 
    \begin{equation}\label{eqn:charlier-recurrence}
        c_{m+1}(n,t) = \frac{n}{t}c_m(n-1,t) - c_m(n,t). 
    \end{equation}
    They exhibit the following nice orthogonality relation, that if $M_t$ is a Poisson random variable with intensity $t > 0$, then for any $m,k\in \N_0$
    \begin{equation}
        \mathbb{E}[c_{m}(M_t,t)c_k(M_t,t)] = \delta_m(k) \frac{m!}{t^m}. 
    \end{equation}
    Introducing the notation $\pi_t(n) = e^{-t}\tfrac{t^n}{n!}$ for the Poissonian weights and using the simple observation that 
    \begin{equation}
        \partial_t \pi_t(n) = \pi_t(n-1) - \pi_t(n), 
    \end{equation}
    one can check via the recurrence relation \eqref{eqn:charlier-recurrence} that the Charlier polynomials can be expressed as
    \begin{equation}
        c_m(n,t) = \frac{\partial_t^m \pi_t(n)}{\pi_t(n)}. 
    \end{equation}
    This means we can rewrite \eqref{eqn:poissonisation} as 
    \begin{equation}\label{eqn:charlier-representation}
        ((-\Delta)^m P_t \psi)(x)
        = \mathfrak{w}^m E_x[c_m(N_t, \mathfrak{w}t) \psi(X_t)].
    \end{equation}
    Now we want to use Cauchy--Schwarz to get 
    \begin{equation}
        \abs{(\Delta^m P_t \psi)(x)} 
        \leq  \mathfrak{w}^m E_x[c_m(N_t, \mathfrak{w}t)^2]^{1/2} E_x[\psi(X_t)^2]^{1/2} 
        = \frac{\mathfrak{w}^{m/2} (m!)^{1/2}}{t^{m/2}} \mathbb{E}[\psi(X_t)^2]^{1/2}.  
    \end{equation}
    It remains to bound the random walk expectation, but this is easily bounded by $\sim (1+ \abs{x} + t)^N$ when \(\psi \in H_{-N}\). So we can choose $m > 2N$ to get the claimed decay \eqref{equation:pointwise_converge_to_zero_higher_order_derivative_copy}.
\end{proof}

\subsection{Subquadratically bounded weights}\label{sec:unbounded-weights}

Theorem~\ref{theorem:stationary_measures_under_subquadratic_bounds} follows from Lemma~\ref{lemma:support_in_harmonic_functions_if_heat_semigroup_is_nice} and the following
\begin{prop}
    Assume that the conductances $\omega$ satisfy Assumption~\ref{assumption:subquadratic-growth}.
    Then, the heat semigroup \((P_t)_{t \geq 0}\) has the property that \eqref{equation:pointwise_converge_to_zero_higher_order_derivative} holds for every \(m > \tfrac{2}{\epsilon} N\).
\end{prop}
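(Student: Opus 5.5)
We aim to show that for $\psi \in H_{-N}$ and fixed $x$, $(\Delta^m P_t\psi)(x) = \langle \psi, \Delta^m P_t \mathbf{1}_x\rangle$ tends to zero. By duality \eqref{equation:H_minus_N_norm_by_duality} we have
\begin{align*}
    \abs{(\Delta^m P_t\psi)(x)} \leq \Vert\psi\Vert_{H_{-N}}\, \Vert \Delta^m P_t \mathbf{1}_x\Vert_{H_N},
\end{align*}
so it suffices to prove $\Vert \Delta^m P_t \mathbf{1}_x\Vert_{H_N}\to 0$ as $t\to\infty$ whenever $m>2N/\epsilon$. This balances two competing effects. Analyticity-type decay of $\Delta^m P_t$ gains roughly $t^{-m}$ in $\ell^2$. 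The weight $\jap{\cdot}^{2N}$, on the other hand, costs a polynomial factor in $t$ measuring how far the walk can spread. Under subquadratic growth this spreading is only polynomial, of order $t^{1/\epsilon}$ in space, rather than exponential as the crude bound of Lemma~\ref{lemma:semigroup_norm_bound_schwartz} would suggest.

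\textbf{Step 1 (pure $\ell^2$ decay).} Since $\Delta=\Delta^{(D)}=\Delta^{(N)}$ is nonnegative self-adjoint on $\ell^2$ (Lemma~\ref{lemma:the_semigroup_on_l2}), the spectral theorem gives $\Vert \Delta^k P_t\Vert_{\ell^2\to\ell^2}\leq (k/(et))^k$. We apply this with $k$ of order $2m$, so that it dominates after interpolation.

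\textbf{Step 2 (polynomial growth of weighted norms).} We prove that $\Vert P_t u\Vert_{H_M}\lesssim (1+t)^{M/\epsilon}$ for $u\in C_c$ and $t\geq 1$, and similarly for $\Delta^j P_t u$. The argument repeats the Gronwall computation of Lemma~\ref{lemma:semigroup_norm_bound_schwartz}, but with a weight adapted to the conductances. Choose $g(x)=\jap{x}^{\epsilon/2}$. Under Assumption~\ref{assumption:subquadratic-growth}, $\sum_{y\sim x}\omega(x,y)\abs{g(x)-g(y)}^2\lesssim \jap{x}^{2-\epsilon}\jap{x}^{\epsilon-2}=O(1)$ uniformly, so $g$ is essentially $1$-Lipschitz in the intrinsic metric. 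Applying Caccioppoli to $U_k(t)=\sum_x g(x)^{2k}\abs{P_tu(x)}^2$ gives
\[
U_k'\lesssim k^2\sum_x g^{2k-2}\abs{P_tu}^2\leq k^2 U_k^{(k-1)/k}\Vert u\Vert_{\ell^2}^{2/k}.
\]
Here we used Hölder together with $\Vert P_tu\Vert_{\ell^2}\leq\Vert u\Vert_{\ell^2}$. Integrating yields $U_k(t)\lesssim (1+t)^{k}$, that is, $\Vert g^k P_t u\Vert_{\ell^2}\lesssim (1+t)^{k/2}$. Taking $k=2M/\epsilon$ gives $\Vert P_tu\Vert_{H_M}\lesssim(1+t)^{M/\epsilon}$. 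The same bound holds for $\Delta^jP_t u=P_t\Delta^j u$ with $\Delta^j u\in C_c$.

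\textbf{Step 3 (interpolation).} Write $f=\Delta^m P_{t}\mathbf{1}_x=\Delta^m P_{t/2}(P_{t/2}\mathbf{1}_x)$. By Cauchy--Schwarz,
\[
\Vert f\Vert_{H_N}^2=\sum \jap{y}^{2N}f^2\leq \Vert f\Vert_{\ell^2}\,\Vert \jap{\cdot}^{4N}f\Vert_{\ell^2},
\]
which is the interpolation between $H_0$ and $H_{2N}$. The first factor is $O(t^{-m})$ by Step 1. The second factor equals $\Vert P_t\Delta^m\mathbf{1}_x\Vert_{H_{2N}}\lesssim t^{2N/\epsilon}$ by Step 2. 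Hence $\Vert f\Vert_{H_N}^2\lesssim t^{-m+2N/\epsilon}\to0$ exactly when $m>2N/\epsilon$, which matches the threshold in the statement.

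\textbf{Main obstacle.} The delicate point is Step 2. One must justify the Caccioppoli/Gronwall argument with the unbounded weight $g^{2k}$ by a finite-volume approximation, as in Lemma~\ref{lemma:semigroup_norm_bound_schwartz}. One must also control the cross terms where the weight difference $g^k(x)-g^k(y)$ is compared with $k g^{k-1}$ times the increment $\abs{g(x)-g(y)}$. The ratio $g(y)/g(x)$ is uniformly bounded for neighbours, so these terms contribute only constants depending on $k$. Step 2 is also where subquadratic growth is used, replacing the exponential factor $\e^{K_N t}$ by a polynomial one.
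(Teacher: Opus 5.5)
Your proof is correct. Its skeleton is the paper's: duality reduces everything to $\|P_t\Delta^m\mathbf{1}_x\|_{H_N}$, the spectral theorem gives $O(t^{-m})$ in $\ell^2$, and Cauchy--Schwarz interpolates between $\ell^2$ and $H_{2N}$ (Lemma~\ref{lemma:H_N_convergence_to_zero_of_higher_order_derivative}). The genuine difference is how you prove the polynomial bound $\|P_t\|_{H_M\to H_M}\lesssim(1+t)^{M/\epsilon}$ (Lemma~\ref{lemma:polynomial_in_time_operator_norm_bounds}).

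The paper keeps the crude exponential Gronwall estimate of Lemma~\ref{lemma:semigroup_norm_bound_schwartz} but reruns it with the shifted weight $\langle x\rangle_\alpha=(|x|^2+\alpha^2)^{1/2}$. Subquadratic growth gives $K_{N,\alpha}\lesssim\alpha^{-\epsilon}$, changing weights costs $\alpha^N$, and $\alpha=(1+t)^{1/\epsilon}$ balances the two.

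You instead use the weight $\langle x\rangle^{k\epsilon/2}$, whose intrinsic gradient $\sum_y\omega(x,y)|g(x)-g(y)|^2$ is bounded. You close the Caccioppoli inequality with H\"older and $\ell^2$-contractivity, which gives $\frac{\mathrm{d}}{\mathrm{d}t}U_k^{1/k}\lesssim\|u\|_{\ell^2}^{2/k}$, i.e.\ linear growth of $U_k^{1/k}$. Since $\|u\|_{\ell^2}\le\|u\|_{H_M}$, this is in fact also an operator-norm bound with the same exponent, so the threshold $m>2N/\epsilon$ comes out identically. The finite-volume justification you flag goes through exactly as in Lemma~\ref{lemma:semigroup_norm_bound_schwartz}, because the killed semigroup $P^\Lambda_t$ is also an $\ell^2$-contraction.

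As for what each route buys: the paper's scaling trick is shorter and needs nothing beyond Caccioppoli. Yours uses the extra input of $\ell^2$-contractivity, but it exposes the mechanism, namely diffusive spreading as measured by a function that is Lipschitz for the intrinsic metric. It would also work for any weight with bounded intrinsic gradient and comparable neighbour values, not only powers of $\langle x\rangle$. That puts it closer in spirit to the intrinsic-metric arguments used later for random conductances.

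There are some minor slips:
\begin{enumerate}
\item The second Cauchy--Schwarz factor is $\|\langle\cdot\rangle^{2N}f\|_{\ell^2}=\|f\|_{H_{2N}}$, not $\|\langle\cdot\rangle^{4N}f\|_{\ell^2}$.
\item Step~1 is actually used with $k=m$, not $k\approx 2m$.
\item The splitting $P_{t/2}P_{t/2}$ is never used.
\item The H\"older step needs $k\geq 1$, i.e.\ $M\geq\epsilon/2$. Smaller $M$ is handled by interpolating between $\ell^2$ and $H_{\epsilon/2}$.
\end{enumerate}
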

\begin{proof}
    Consider \(\psi \in H_{-N}\) and \(m > \frac{2N}{\epsilon}\).
    Then, for every x,
    \begin{align*}
        (\Delta^m P_t \psi)(x)
        = \langle \Delta^m P_t \psi, \mathbf{1}_x \rangle_{\mathcal{S}', \mathcal{S}}
        = \langle \psi, P_t \Delta^m \mathbf{1}_x \rangle_{\mathcal{S}', \mathcal{S}}
        \leq \Vert\psi\Vert_{H_{-N}} \Vert P_t \Delta^m \mathbf{1}_x\Vert_{H_{N}}
    \end{align*} and the r.h.s. converges to zero as \(t \to \infty\) by Lemma~\ref{lemma:H_N_convergence_to_zero_of_higher_order_derivative}.
\end{proof}

For the last proof, we needed the following
\begin{lemma}\label{lemma:H_N_convergence_to_zero_of_higher_order_derivative}
    Assume that the conductances $\omega$ satisfy Assumption~\ref{assumption:subquadratic-growth}.
    Then, for every \(u \in C_c\), we have
    \begin{align}
        \Vert P_t \Delta^m u \Vert_{H_{N}}
        \xrightarrow[t \to \infty]{} 0
    \end{align} for all \(N \geq 0\) and \(m > \tfrac{2}{\epsilon} N\).
\end{lemma}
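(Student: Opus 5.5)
We need to show $\|P_t\Delta^m u\|_{H_N}\to0$ for $u\in C_c$ and $m>2N/\epsilon$. The plan is to balance two competing effects. Analyticity of the $\ell^2$ semigroup gives decay in $t$ of $\Delta^m P_t u$ in $\ell^2$ at rate $t^{-m}$. The polynomial weight $\langle x\rangle^{2N}$ grows at most polynomially in $t$, because under subquadratic conductances the walk travels at most distance of order $t^{1/\epsilon}$ up to time $t$, up to negligible tails.

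\textbf{Step 1 ($\ell^2$ decay).} Since $\Delta^{(D)}$ is non-negative self-adjoint on $\ell^2$ and $u\in C_c\subseteq\mathrm{dom}$ of all its powers (by Lemma~\ref{lemma:the_semigroup_on_H_N} $iv.$, since $C_c\subseteq H_{M}$ for all $M$), the spectral theorem gives
\[
\|P_t\Delta^m u\|_{\ell^2}=\|\Delta^{m}P_t u\|_{\ell^2}\le \sup_{\lambda\ge0}\lambda^{m}e^{-t\lambda}\|u\|_{\ell^2}\lesssim_m t^{-m}\|u\|_{\ell^2}.
\]
More usefully, I would apply this with $m$ replaced by $2m$ together with the identity $\|\Delta^m P_t u\|^2_{\ell^2}=\langle \Delta^{2m}P_{2t}u,u\rangle$. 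Either way, the $\ell^2$ norm decays like $t^{-m}$.

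\textbf{Step 2 (weighted growth with a better rate).} The bound $e^{K_Nt/2}$ from Lemma~\ref{lemma:semigroup_norm_bound_schwartz} is too weak, so I would redo that Gronwall argument under Assumption~\ref{assumption:subquadratic-growth}. The key input is
\[
\sum_{y\sim x}\omega(x,y)\,\bigl|\langle x\rangle^N-\langle y\rangle^N\bigr|^2\lesssim \langle x\rangle^{2N-\epsilon}.
\]
Set $U(t)=\|P_t v\|^2_{H_N}$ with $v=\Delta^m u$, working in finite volume and passing to the limit as in the earlier proof. The Caccioppoli inequality then gives
\[
U'(t)\lesssim \sum_x \langle x\rangle^{2N-\epsilon}|P_tv(x)|^2\le U(t)^{1-\epsilon/(2N)}\,\|P_tv\|_{\ell^2}^{\epsilon/N}
\]
by H\"older (interpolating the weight $\langle x\rangle^{2N-\epsilon}$ between $\langle x\rangle^{2N}$ and $1$). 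For $N$ large, I would interpolate between $H_N$ and $\ell^2$ in the same way; alternatively one could use $H_{N'}$ for all intermediate $N'$ inductively.

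\textbf{Step 3 (closing the ODE).} Insert $\|P_tv\|_{\ell^2}\lesssim t^{-m}$ from Step 1. This is legitimate because $v=\Delta^m u$ and $P_t v=\Delta^m P_t u$. With $\theta=\epsilon/(2N)$, the differential inequality becomes
\[
\tfrac{d}{dt}U^{\theta}\lesssim t^{-2m\theta}.
\]
Integrating yields that $U^\theta$ is bounded. That alone is not decay, so I would refine the argument.

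\textbf{Refinement.} Apply the same scheme to $\tilde U(t)=t^{2a}U(t)$ with $a>0$ small. This produces an extra term $2a t^{2a-1}U$, and one has to balance it against the decay. A cleaner route uses Step 2 with $m$ replaced by some $m'<m$ satisfying $m'>2N/\epsilon$, which gives boundedness of $\|P_s\Delta^{m'}u\|_{H_N}$. One then interpolates between this bound and the $\ell^2$ decay of $P_t\Delta^m u$, for instance via $H_N$ versus $H_{N+1}$ interpolation with a slightly larger weight. The step that genuinely matters is the H\"older interpolation in Step 2 and the exponent bookkeeping showing that the polynomial growth rate $t^{N\cdot 2/\epsilon}$ is beaten by the decay $t^{-m}$. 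I expect this bookkeeping, that is, obtaining actual decay rather than mere boundedness, to be the main obstacle. The condition $m>2N/\epsilon$ is precisely the threshold at which that comparison works.
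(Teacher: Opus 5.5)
Your Steps 1--3 are correct, but, as you suspected, they only give $\sup_t\|P_t\Delta^m u\|_{H_N}<\infty$, and neither refinement produces decay.

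\begin{itemize}
\item Weighting by $t^{2a}$ does not help. The extra term $2at^{2a-1}U=2a\tilde U/t$ on its own lets $\tilde U$ grow like $t^{2a}$, so you recover only boundedness of $U$.
\item The ``cleaner route'' interpolates bounds on two \emph{different} functions: $P_t\Delta^{m'}u$ is bounded in $H_N$, while $P_t\Delta^{m}u$ decays in $\ell^2$. These cannot be linked, because each extra factor of $\Delta$ costs two powers of the weight.
\end{itemize}

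The missing idea is to run your Steps 2--3 for the \emph{same} function at a \emph{higher} weight level, and then interpolate down. At level $2N$, i.e.\ with $\theta=\epsilon/(4N)$, your inequality becomes $\frac{\mathrm{d}}{\mathrm{d}t}\|P_t\Delta^m u\|_{H_{2N}}^{2\theta}\lesssim(1+t)^{-2m\theta}$. The right-hand side is integrable exactly when $m>2N/\epsilon$, which is where the hypothesis enters. Hence $\sup_t\|P_t\Delta^m u\|_{H_{2N}}<\infty$, and
\[
\|P_t\Delta^m u\|_{H_N}^2\le\|P_t\Delta^m u\|_{\ell^2}\,\|P_t\Delta^m u\|_{H_{2N}}\lesssim t^{-m}\xrightarrow[t\to\infty]{}0 .
\]
This is precisely the Cauchy--Schwarz step with which the paper concludes.

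One further detail: your Gronwall argument runs for the killed semigroup $P^\Lambda_t$. Step 3 therefore needs the $\ell^2$ decay of $P^\Lambda_t\Delta^m u$ uniformly in $\Lambda$, not that of $P_t\Delta^m u$. This holds once $\Lambda$ contains the $m$-neighbourhood of $\mathrm{supp}\,u$, since then $\Delta^m u=\Delta_\Lambda^m u$ and $\Delta_\Lambda\ge0$ is self-adjoint on $\ell^2(\Lambda)$.

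With this repair, your argument is a genuinely different and in fact sharper route than the paper's.
\begin{itemize}
\item The paper never uses the decay inside the differential inequality. It proves the function-independent bound $\|P_t\|_{H_N\to H_N}\lesssim_N(1+t)^{N/\epsilon}$ by rerunning the Gronwall estimate with rescaled weights $\langle x\rangle_\alpha=(|x|^2+\alpha^2)^{1/2}$, for which $K_{N,\alpha}\lesssim\alpha^{-\epsilon}$, and choosing $\alpha=(1+t)^{1/\epsilon}$. The resulting growth $(1+t)^{2N/\epsilon}$ in $H_{2N}$ must be beaten by $t^{-m}$; that is the origin of the threshold $m>2N/\epsilon$.
\item Your H\"older interpolation ties the weighted growth to the $\ell^2$ decay of the specific function $P_t\Delta^m u$. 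This gives boundedness in $H_{N''}$ for every $N''<m\epsilon$. Interpolating $H_N$ between $\ell^2$ and such an $H_{N''}$ with $N''>N$ yields the conclusion already for $m>N/\epsilon$.
\end{itemize}
What the paper's version buys is a reusable, polynomial-in-time operator-norm estimate on $H_N$.
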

\begin{proof}
    Let us first see that the same decay holds in the weaker \(\ell^2\)-norm in place of the \(H_{N}\)-norm.
    Note that \(C_c\) is a subset of the \(\ell^2\)-domain of all powers of \(\Delta = \Delta^{(D)}\) and hence
    \begin{align}
        \Vert P_t \Delta^m u \Vert_{\ell^2}
        = \Vert \Delta^m \mathrm{e}^{-t \Delta} u \Vert_{\ell^2}
        \leq  \Vert \Delta^m \mathrm{e}^{-t \Delta} \Vert_{\ell^2 \to \ell^2} \, \Vert u \Vert_{\ell^2}.
    \end{align}
    The spectral theorem for self-adjoint operators implies that the operator norm on the r.h.s. is bounded by
    \begin{align}
         \Vert \Delta^m \mathrm{e}^{-t \Delta} \Vert_{\ell^2 \to \ell^2} 
         \,\leq\, \sup_{\lambda \geq 0} \lambda^m \mathrm{e}^{-t \lambda}
         = \Big(\frac{m}{\mathrm{e} t}\Big)^m.
    \end{align}
    We can transfer this decay to the \(H_{N}\)-norm via Cauchy--Schwarz
    \begin{align*}
        \Vert P_t \Delta^m u \Vert_{H_N}^2
        \leq \Vert P_t \Delta^m u \Vert_{\ell^2} \, \Vert P_t \Delta^m u \Vert_{H_{2N}},
    \end{align*} if we know that \(\Vert P_t \Delta^m u \Vert_{\ell^2}\) has a faster decay to zero than \(\Vert P_t \Delta^m u \Vert_{H_{2N}}\) can grow as \(t \to \infty\).
    This is ensured by Lemma~\ref{lemma:polynomial_in_time_operator_norm_bounds} if we choose \(m > \tfrac{2}{\epsilon} N\).
\end{proof}

\begin{lemma}\label{lemma:polynomial_in_time_operator_norm_bounds}
    Assume that the conductances $\omega$ satisfy Assumption~\ref{assumption:subquadratic-growth}.
    Then,
    \begin{align}\label{equation:polynomial_in_time_H_N_operator_norm_bound}
        \Vert P_t \Vert_{H_{N} \to H_{N}}
        \lesssim_N (1 + t)^{N / \epsilon},
    \end{align} for all \(N \geq 0\).
\end{lemma}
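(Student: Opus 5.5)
We want $\|P_t u\|_{H_N}\lesssim_N (1+t)^{N/\epsilon}\|u\|_{H_N}$. We reuse the energy computation of Lemma~\ref{lemma:semigroup_norm_bound_schwartz}, but do not close it with Gronwall at rate $K_N$. Instead we compare with the $\ell^2$-norm, which is contractive. As there, we work with the killed semigroup $P^\Lambda_t$ on $\Lambda\Subset\Cluster$ and pass to the limit by Fatou. Set $U_N(t):=\|P_t^\Lambda u\|_{H_N}^2$. The Caccioppoli step gives
\begin{align*}
    \frac{\dd}{\dd t}U_N(t)\le \sum_x |P^\Lambda_t u(x)|^2\sum_{y\sim x}\omega(x,y)\,|\jap{x}^N-\jap{y}^N|^2 .
\end{align*}
For neighbours, $|\jap{x}^N-\jap{y}^N|\lesssim_N \jap{x}^{N-1}$. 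With Assumption~\ref{assumption:subquadratic-growth} this yields
\begin{align*}
    \frac{\dd}{\dd t}U_N(t)\le C_N\sum_x \jap{x}^{2N-\epsilon}|P^\Lambda_t u(x)|^2 .
\end{align*}
So the derivative is controlled by a weighted norm of order $N-\epsilon/2$, strictly below $N$. This gain is what separates the subquadratic case from the quadratic one.

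Next we interpolate. By Hölder with exponents $\tfrac{2N}{2N-\epsilon}$ and $\tfrac{2N}{\epsilon}$, splitting $\jap{x}^{2N-\epsilon}=\jap{x}^{2N\theta}\cdot 1$ with $\theta=1-\tfrac{\epsilon}{2N}$,
\begin{align*}
    \sum_x \jap{x}^{2N-\epsilon}|f(x)|^2\le U_N^{\,1-\epsilon/(2N)}\,\|f\|_{\ell^2}^{\epsilon/N}.
\end{align*}
Here $\|P^\Lambda_t u\|_{\ell^2}\le\|u\|_{\ell^2}\le \|u\|_{H_N}$, since the killed semigroup is an $\ell^2$-contraction. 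Hence
\begin{align*}
    U_N'\le C_N\, U_N^{1-\epsilon/(2N)}\,\|u\|_{H_N}^{\epsilon/N}.
\end{align*}
Integrating this Bernoulli-type inequality gives
\begin{align*}
    U_N(t)^{\epsilon/(2N)}\le U_N(0)^{\epsilon/(2N)}+C'_N t\,\|u\|_{H_N}^{\epsilon/N},
\end{align*}
with $U_N(0)\le\|u\|_{H_N}^2$. Therefore $U_N(t)\lesssim_N (1+t)^{2N/\epsilon}\|u\|_{H_N}^2$. Taking square roots gives \eqref{equation:polynomial_in_time_H_N_operator_norm_bound}, uniformly in $\Lambda$.

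The main point of care is the differential inequality near $U_N=0$, where $U_N^{1-\epsilon/(2N)}$ is not Lipschitz. We handle it by comparison with $(U_N+\delta)$ and then letting $\delta\downarrow0$. The neighbour estimate $|\jap{x}^N-\jap{y}^N|\lesssim\jap{x}^{N-1}$ is routine because $|x-y|=1$ forces $\jap{x}\asymp\jap{y}$. The case $N<\epsilon/2$, or generally small $N$, poses no issue either: for $2N\le\epsilon$ the weight $\jap{x}^{2N-\epsilon}\le1$ directly gives $U_N'\lesssim\|u\|_{\ell^2}^2$, hence linear growth, which is even better.
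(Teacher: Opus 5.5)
Your main argument is correct for $2N\ge\epsilon$, and it takes a different route from the paper. The paper keeps the \emph{linear} Gronwall estimate of Lemma~\ref{lemma:semigroup_norm_bound_schwartz} but runs it with the shifted weight $\jap{x}_\alpha=(|x|^2+\alpha^2)^{1/2}$, $\alpha\ge 1$. The subquadratic bound makes the rate $K_{N,\alpha}\lesssim_N\alpha^{-\epsilon}$. The comparison $\|\cdot\|_{H_N}\le\|\cdot\|_{H_{N,\alpha}}\le\alpha^N\|\cdot\|_{H_N}$ costs a factor $\alpha^N$, and the choice $\alpha=(1+t)^{1/\epsilon}$ balances the two.

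You instead keep the weight fixed and exploit the $\epsilon$-gain $\jap{x}^{2N-\epsilon}$ in the Caccioppoli term. You then close a nonlinear Bernoulli-type inequality by interpolating against the $\ell^2$-contractivity of $P^\Lambda_t$. Your route makes transparent where the exponent $N/\epsilon$ comes from, namely the interplay of an $\ell^2$-norm that does not grow and an $\epsilon/2$-gain in the weight, and it gives an explicit constant. The paper's route needs no $\ell^2$ input and no nonlinear comparison argument, and it works verbatim for every $N\ge 0$.

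Your last paragraph, however, is wrong. For $0<2N<\epsilon$, your bound $U_N'\lesssim\|u\|_{\ell^2}^2$ gives $U_N(t)\lesssim(1+t)\|u\|_{H_N}^2$, i.e.\ $\|P_t\|_{H_N\to H_N}\lesssim(1+t)^{1/2}$. Since $N/\epsilon<1/2$ in this range, this is \emph{weaker} than the claimed $(1+t)^{N/\epsilon}$, not ``even better''. Your H\"older step cannot be pushed into this range: the exponent $1-\epsilon/(2N)$ becomes negative, and $\sum_x\jap{x}^{2N-\epsilon}|f(x)|^2$ is then a negative-order norm that you can only bound by $\|f\|_{\ell^2}^2$. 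As written, the lemma is therefore proved only for $N\ge\epsilon/2$. This is harmless for the later application, where $m\ge 1>1/2$ anyway, but the statement claims all $N\ge0$.

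A simple repair is as follows. Fix some $M\ge\epsilon/2$, for which you have $\|P_t\|_{H_M\to H_M}\lesssim_M(1+t)^{M/\epsilon}$. Then interpolate operator norms between $H_0=\ell^2$, where $P_t$ is a contraction, and $H_M$. Stein--Weiss interpolation of weighted $\ell^2$-spaces gives $[H_0,H_M]_{N/M}=H_N$, and hence $\|P_t\|_{H_N\to H_N}\le\|P_t\|_{\ell^2\to\ell^2}^{1-N/M}\,\|P_t\|_{H_M\to H_M}^{N/M}\lesssim(1+t)^{N/\epsilon}$. Alternatively, use the paper's shifted weights $\jap{x}_\alpha$ in this range.
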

\begin{proof}
    Recall that Lemma~\ref{lemma:semigroup_norm_bound_schwartz} gave us the \emph{exponential} in \(t\) bound
    \begin{align*}
        \Vert P_t \Vert_{H_{N} \to H_{N}}
        \leq \mathrm{e}^{K_N t/2}
    \end{align*} for quadratically bounded weights \(\omega\) and with constant
    \begin{align*}
        K_N
        := \sup_x \,\, \jap{x}^{-2N} \sum_{y \sim x} \omega(x, y) \vert \jap{x}^N - \jap{y}^N \vert^2.
    \end{align*}
    As we assume that our weights are only allowed to grow like \(\sim \jap{x}^{2-\epsilon}\), there is some room to optimize. 
    Indeed, let us define a new norm by
    \begin{align*}
        \Vert u\Vert_{H_{N, \alpha}}^2
        := \sum_{x} \jap{x}_\alpha^{2N} \vert u(x)\vert^2
    \end{align*} with  \(\jap{x}_\alpha := (\vert x\vert^2 + \alpha^2)^{1/2}\) for \(\alpha \geq 1\).
    Then the same proof shows that
    \begin{align*}
        \Vert P_t \Vert_{H_{N, \alpha} \to H_{N, \alpha}}
        \leq \mathrm{e}^{K_{N, \alpha} t/2}
    \end{align*} with
    \begin{align*}
        K_{N, \alpha}
        := \sup_x \,\, \jap{x}_\alpha^{-2N} \sum_{y \sim x} \omega(x, y) \vert \jap{x}_\alpha^N - \jap{y}_\alpha^N \vert^2.
    \end{align*} and hence
    \begin{align}\label{equation:operator_norm_bound_with_alpha_to_be_optimized}
        \Vert P_t \Vert_{H_{N} \to H_{N}}
        \leq \alpha^N \Vert P_t \Vert_{H_{N, \alpha} \to H_{N, \alpha}}
        \leq \alpha^N \mathrm{e}^{K_{N, \alpha} t/2}.
    \end{align}
    Let us now first use that the conductances grow subquadratically by Assumption~\ref{assumption:subquadratic-growth}  to control \(K_{N, \alpha}\) and then optimize in \(\alpha\).
    Observe that for nearest-neighbours \(x \sim y\), 
    \begin{align*}
        \vert \jap{x}_\alpha^N - \jap{y}_\alpha^N \vert^2
        \leq C_N' \jap{x}_\alpha^{2(N - 1)}
    \end{align*} by the mean-value theorem.
    Hence,
    \begin{align*}
        K_{N, \alpha}
        \leq C_N' \sup_x \Big(\sum_{y \sim x} \omega(x, y) \Big) \jap{x}_\alpha^{-2}
        \leq C_N'' \sup_{x} \jap{x}_\alpha^{-\epsilon}
        = C_N''  \alpha^{-\epsilon}
    \end{align*} by Assumption~\ref{assumption:subquadratic-growth}.
    By \eqref{equation:operator_norm_bound_with_alpha_to_be_optimized} we further get
    \begin{align*}
        \Vert P_t \Vert_{H_{N} \to H_{N}}
        \leq \alpha^N \exp( C_N''  \alpha^{-\epsilon} t / 2).
    \end{align*}
    Setting \(\alpha = (1+t)^{1/\epsilon}\) finally yields \eqref{equation:polynomial_in_time_H_N_operator_norm_bound}.
\end{proof}

\subsection{Quadratically growing weights: non-reversibility and periodicity}\label{section:non-reversible-stationary-measures}
\begin{proof}[Proof of Theorem~\ref{theorem:non-reversible-periodic}]
Consider the one-dimensional line \(\mathbb{Z}\) equipped with conductances \(\omega(n, n+1) = n^2 + 1\) for \(n \in \mathbb{Z}\). One could think that the variable speed random walk on \(\Z\) with symmetric these conductances is explosive, but it turns out to be barely non-explosive. On \(\Z\) with nearest neighbour conductances, \cite[Theorem 9.25]{keller2021graphs} shows that the variable speed random walk with conductances \(c\) is non-explosive if and only if
\begin{align*}
    \sum_{n \in \Z} \frac{\vert n\vert}{c(n, n+1)}
    = \infty,
\end{align*} 
which is satisfied for our choice. 
    
Moreover, the total conductances at a single site are obviously not growing faster than quadratically, so Assumption \ref{assumption:at_most_quadratically_growing}  is satisfied. Additionally, we also have that \(x \mapsto G(x, x)\) is an \(\mathcal{S}'\)-function. In fact, it is even uniformly bounded by the constant \(\frac{1}{4} \sum_{n \in \mathbb{Z}} \frac{1}{n^2 + 1}\). In particular, the zero boundary condition \(\varphi\)-Gibbs measure has its support in \(\mathcal{S}'\).
    
To show that for this choice of conductances not every time-stationary probability measure is reversible, i.e.\, a Gibbs measure, according to Proposition~\ref{proposition:gaussian-disintegration} and the characterisation of Gibbs measures in Proposition \ref{proposition:gaussian-gibbs-measures}, it suffices to construct some probability measure \(\varrho\) on \(\mathcal{S}'\) such that \(\varrho \circ P_t^{-1} = \varrho\) for all \(t \geq 0\), but
\begin{align*}
        \varrho\big(\big\{\xi \,\colon\, P_t \xi = \xi \,\,\forall t \geq 0 \big\} \big)
        = 0.
\end{align*}
For that, we first construct $\mathcal{S}'(\Z^d)$-valued time-periodic solutions to the discrete heat equation $\partial_t u = -\Delta_\omega u$. 
\begin{lemma}\label{lemma:purely_imaginary_eigenvectors_in_S_prime}
    Consider the conductances $\omega(n,n+1)=\omega(n+1, n) =n^2 + 1$ for $n\in \Z$. There exist linearly independent \(\xi, \psi \in \mathcal{S}'\) such that
    \begin{align*}
        &P_t \xi
        = \cos(t) \xi + \sin(t) \psi, \\
        &P_t \psi
        = -\sin(t) \xi + \cos(t) \psi
    \end{align*} for all \(t \geq 0\).
    In particular, for
    \begin{align*}
        \psi_\theta := \cos(\theta) \xi + \sin(\theta) \psi
    \end{align*} it holds that
    \begin{align*}
        P_t \psi_\theta
        = \cos(\theta + t) \xi + \sin(\theta + t) \psi
        = \psi_{(\theta + t) \,\mathrm{mod}\, 2 \pi}
    \end{align*} and \(\psi_\theta \neq \psi_{\widetilde{\theta}}\) for distinct \(\theta, \widetilde{\theta} \in [0, 2\pi)\) by the linear independence of \(\xi, \psi\).
\end{lemma}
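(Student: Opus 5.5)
The plan is to find a complex solution $h\colon\Z\to\C$ of $\Delta h = -i h$ (or $ih$) that lies in $\mathcal{S}'(\Z)$, i.e.\ grows at most polynomially. We then set $\xi := \mathrm{Re}\,h$ and $\psi := \mathrm{Im}\,h$ (up to a sign convention) and use the semigroup on $\mathcal{S}'$ from Lemma~\ref{lemma:mapping-properties-tempered-distributions} to get the rotation formulas. Concretely, suppose $\Delta h = -ih$ with $h\in H_{-N}$. Since $\Delta$ maps $H_{-N}$ boundedly into $H_{-N-2}$, the function $u(t) := \cos(t)\xi + \sin(t)\psi$ is a $C^1$ curve in $H_{-N-2}$. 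It solves $\partial_t u = -\Delta u$ with $u(0)=\xi$, because $-\Delta(\xi + i\psi) = i(\xi+i\psi)$ gives $-\Delta\xi = -\psi$ and $-\Delta\psi = \xi$ (signs to be fixed by the choice of $\pm i$). The uniqueness argument already used in Lemma~\ref{lemma:uniqueness-stochastic-part}, with zero noise, then gives $P_t\xi = u(t)$. That argument uses that $(P_t)$ is a $C_0$-semigroup on $H_{-N-2}$ (Remark~\ref{remark:C0_semigroup_on_H_minus_N_and_analyticity}) whose generator acts as $-\Delta$, so mild solutions are unique. The same computation gives the formula for $P_t\psi$.

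\textbf{Constructing $h$.} The eigen-equation is a three-term recurrence
\[
a_n(h(n)-h(n+1)) + a_{n-1}(h(n)-h(n-1)) = -i h(n),\qquad a_n = n^2+1 .
\]
Given $h(0),h(1)$, it determines $h$ on all of $\Z$. The task is to choose the initial data so that $h$ grows polynomially at both $+\infty$ and $-\infty$.
\begin{itemize}
\item[(a)] \emph{Asymptotics on each half-line.} For large $n$ the recurrence is a perturbation of the Euler-type equation $(n^2\partial^2 + 2n\partial)h \approx i h$. Its continuum version $(x^2 h')' = i h$ has power solutions $x^{r}$ with $r(r+1) = i$. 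Both roots satisfy $\mathrm{Re}\, r = -\tfrac12 \pm \tfrac12\mathrm{Re}\sqrt{1+4i}$, so both solutions are polynomially bounded. This is the whole point: quadratic conductances put us in the ``limit circle'' regime, where every solution on a half-line is tempered.
\item[(b)] \emph{Rigorous version.} I would make (a) rigorous with a discrete Levinson / Birkhoff-type theorem, or more cheaply via a Grönwall estimate. Writing the recurrence as a first-order $2\times2$ system in the variables $(h(n), a_n(h(n+1)-h(n)))$ gives a transfer matrix $I + B/n + O(n^{-2})$. Hence $\|T_n\cdots T_1\| \le C n^{\|B\|}$, and every solution has polynomial growth on $n\ge 0$. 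The same holds on $n\le 0$ by symmetry of $a_n$.
\end{itemize}
Consequently every solution of the recurrence on $\Z$ lies in $\mathcal{S}'$. No matching of decaying modes is needed, so we may take, for example, $h(0)=1$, $h(1)=0$.

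\textbf{Linear independence.} This is easy once $h$ exists. If $\psi = c\,\xi$ with $c$ real, then $h = (1+ic)\xi$, and $\Delta h = -ih$ forces $\Delta\xi = -i\xi$ with $\xi$ real, hence $\xi=0$. That contradicts $h\neq 0$. If $\xi=0$, the same argument applied to $\psi$ gives a contradiction. The claims about $\psi_\theta$ then follow directly from the addition formulas and linear independence.

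\textbf{Main obstacle.} The main obstacle is step (b): controlling the discrete recurrence uniformly, including the lower-order terms coming from $a_n - a_{n-1} = 2n-1$ and the $+1$ in $a_n$. The bound must be polynomial rather than merely subexponential. I expect the product estimate $\prod_k (1 + C/k) \lesssim n^{C}$ to suffice.
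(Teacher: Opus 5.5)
Your overall route is the paper's: produce a polynomially bounded $h\colon\Z\to\C$ with $\Delta h=\pm ih$ from the three-term recurrence, then take real and imaginary parts. But the one step with real content, the growth bound in (b), fails as you wrote it.

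Put $D_n:=a_n(h(n+1)-h(n))$. Then $\Delta h=-ih$ reads $D_{n+1}=D_n+ih(n+1)$ and $h(n+1)=h(n)+D_n/a_n$. In your variables $(h(n),D_n)$ the one-step matrix is therefore
\[
T_n=\begin{pmatrix}1 & 1/a_n\\ i & 1+i/a_n\end{pmatrix}\longrightarrow\begin{pmatrix}1 & 0\\ i & 1\end{pmatrix},
\]
which is not of the form $I+B/n+O(n^{-2})$. Its norm tends to $(1+\sqrt5)/2$, so submultiplicativity only gives an exponential bound. The two coordinates live on different scales ($D_n$ is roughly $n$ times $h(n)$), and you have to balance them. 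For instance, with $g_n:=D_{n-1}/n$ you get
\[
\begin{pmatrix}h(n+1)\\ g_{n+1}\end{pmatrix}=\begin{pmatrix}1+i/a_n & n/a_n\\ i/(n+1) & n/(n+1)\end{pmatrix}\begin{pmatrix}h(n)\\ g_n\end{pmatrix},\qquad \begin{pmatrix}1+i/a_n & n/a_n\\ i/(n+1) & n/(n+1)\end{pmatrix}=I+\frac1n\begin{pmatrix}0 & 1\\ i & -1\end{pmatrix}+O(n^{-2}).
\]
The characteristic polynomial of this $B$ is $\lambda^2+\lambda-i$, which recovers your exponents $r(r+1)=i$. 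Now $\prod_{k\le n}(1+\Vert B\Vert/k+C/k^2)\lesssim n^{\Vert B\Vert}$ really does give polynomial growth. Also, ``limit circle'' is the wrong name here: the solution growing like $n^{0.3}$ is not in $\ell^2$, so this is the limit-point case. All you need is temperedness.

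The paper avoids the matrix analysis by its choice of data. It takes $h(0)=h(1)=1$, so $D_0=0$, and $\Delta h=ih$ integrates to $h(n+1)=h(n)-\frac{i}{n^2+1}\sum_{k=1}^n h(k)$. Then $M_n:=\max_{k\le n}|h(k)|$ satisfies $M_{n+1}\le(1+\frac1n)M_n$, so $|h(n)|\le n$. The negative half-line is handled by setting $h(n):=h(1-n)$, which works because the conductances are symmetric under $n\mapsto1-n$.

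Your Gr\"onwall fallback works just as well for arbitrary data. From $D_n=D_0+i\sum_{k=1}^n h(k)$ you get $M_{n+1}\le(1+\frac1n)M_n+|D_0|/n^2$, hence $M_n=O(n)$. Your route proves slightly more (every solution is tempered, so the choice of data is irrelevant), while the paper's is a one-liner.

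On the other side, your linear-independence argument and your identification of $P_t\xi$ supply details the paper leaves implicit. For the latter, duality is quicker than working with the $C_0$-semigroup on $H_{-N-2}$. For $w\in\mathcal{S}$, Lemma~\ref{lemma:the_semigroup_on_schwartz_space} and Lemma~\ref{lemma:mapping-properties-tempered-distributions} give $\frac{\mathrm{d}}{\mathrm{d}t}\langle h,P_tw\rangle=-\langle\Delta h,P_tw\rangle$. Hence $P_th=e^{\mp it}h$ when $\Delta h=\pm ih$.
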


\begin{proof}
    We will show that there is a complex-valued \(0 \neq h \in \mathcal{S}'_{\mathbb{C}}\), where everything works just analogously to the real-valued case, such that \(P_t h = \mathrm{e}^{-it} h\) because of \(\Delta h = i h\). Then we can just take real and imaginary parts, \(\xi = \mathrm{Re}\, h\) and \(\psi = \mathrm{Im}\, h\). 

    Now, set \(h(0) := 1\), \(h(1) := 1\), and recursively
    \begin{align*}
        h(n+1) 
        := h(n) - \frac{i}{n^2 + 1} \sum_{k = 1}^{n} h(k), \quad n \geq 1.
    \end{align*} On the other half of \(\mathbb{Z}\), we set
    \begin{align*}
        h(n) := h(1-n), \quad n \leq 0,
    \end{align*} so that \(h\) is symmetric about \(1/2\).
    Let us explicitly check that \(\Delta h = i h\) pointwise. Indeed, for \(n \geq 1\), we have
\begin{align*}
    (\Delta h)(n)
    &= c(n, n+1)[h(n) - h(n+1) ] + c(n-1, n) [h(n) - h(n-1)] \\
    &= (n^2 + 1) \frac{i}{n^2 + 1} \sum_{k = 1}^{n} h(k) + ((n-1)^2 + 1) \frac{-i}{(n-1)^2 + 1} \sum_{k = 1}^{n-1} h(k) \\
    &= i  h(n).
\end{align*}
The case \(n \leq 0\) follows by symmetry from the case \(n \geq 1\). Clearly, \(h \in \mathcal{S}'\), as it grows at most linearly.
\end{proof} 

Once we have the $\mathcal{S}'(\Z^d)$-valued time-periodic orbits constructed in Lemma \ref{lemma:purely_imaginary_eigenvectors_in_S_prime} at our disposal, it suffices to note that we can put
\[\varrho = (2\pi)^{-1}\int_{0}^{2 \pi} \delta_{\psi_\theta} \dd\theta\]
with \(\psi_\theta\) from Lemma~\ref{lemma:purely_imaginary_eigenvectors_in_S_prime}. Then the measure $\nu$ defined via $\varrho$ in terms of \eqref{eq:disintegration} is a non-reversible stationary measure for \eqref{sde} by combining Proposition~\ref{proposition:gaussian-disintegration} and Proposition~\ref{lemma:gibbs-reversible}.

To get a non-trivial time-periodic orbit $(\nu_t)_{0\leq t\leq \tau}$ of measures on $\mathcal{S}'(\Z^d)$ it suffices to note that for any Gaussian measure $\mathcal{N}(\xi, G) = \xi + \mathcal{N}(0,G)$, $\xi \in \mathcal{S}'(\Z^d)$ the dynamics of the SDE acting on $\mathcal{N}(\xi, G)$ is particularly simple. Indeed, if $\phi_0 \sim \mathcal{N}(\xi, G)$, then by linearity for any $t\geq 0$ we have $\phi_t \sim \mathcal{N}(P_t \xi, G)$. So choosing $\nu_t := \mathcal{N}(P_t \psi, G)$ for $t\geq 0$ with $\psi$ from Lemma \ref{lemma:purely_imaginary_eigenvectors_in_S_prime} yields the desired non-trivial $2\pi$-periodic orbit. 
\end{proof}

\subsection{The intrinsic metric and a Shnol-type theorem}

\begin{proof}[Proof of Proposition~\ref{proposition:shnol_type_theorem}]
    According to the version of Shnol's theorem which is presented in \cite[Theorem 12.25]{keller2021graphs}, we have the following (although it is stated there for \(\lambda \in \mathbb{R}\) only, mutatis mutandis the same proof works).
    For every \(\lambda \in \mathbb{C}\) such that a function \(\psi \neq 0\) exists with
    \begin{enumerate}
        \item \(\Delta \psi = \lambda \psi\),
        \item \(\e^{-\alpha \rho(0, \cdot)} \psi \in \ell^2_\mathbb{C}\) for all \(\alpha > 0\),
    \end{enumerate} it is \(\lambda \in \sigma(\Delta^{(D)}; \ell^2)\), where \(\Delta^{(D)}\) is the (self-adjoint) Dirichlet Laplacian and \(\sigma(\Delta^{(D)}; \ell^2) \subseteq [0, \infty)\) its spectrum as unbounded operator on \(\ell^2_\mathbb{C}\). Condition (2) holds for every \(\psi \in \mathcal{S}'\)  under Assumption \ref{assumption:intrinsic_metric} on the intrinsic metric \(\rho\). Note that the intrinsic metric \(\rho\) we use has finite jump size \(\sup\{\rho(x,y) \,\colon\, x, y \text{ with } x \sim y \} < \infty\) as \(\rho(x, y) \leq 1\) for \(x \sim y\).
\end{proof}

\subsection{Random conductances} 
As a first step towards proving Theorem~\ref{theorem:random-conductances} and Theorem~\ref{theorem:random-conductances-2}, we need to make sure that Assumption~\eqref{assumption:at_most_quadratically_growing} is satisfied \(\mathbb{P}\)-almost surely. Therefore, we start with the following elementary observation.
\begin{lemma}[(Sub-)quadratic growth from moment bounds]\label{lemma:random_conductance_growth_from_moments}
    Assume that \(\mathbb{P}\) is translation-invariant and the moment assumption
    \begin{align}
        \mathbb{E}[\omega^{(d/2) + \delta}(x, y)] < \infty, \quad x,y \in \Z^d,
    \end{align} holds.
    Then, \(\mathbb{P}\)-a.s.\ it holds for the conductances \(\omega\) the at most quadratic growth \eqref{eqn:assumption-quadratic-growth} if \(\delta = 0\), and if \(\delta > 0\) it \(\mathbb{P}\)-a.s.\ holds \eqref{assumption-eqn:at_most_subquadratically_growing}, i.e.\, growth of order at most \(\jap{x}^{2-\epsilon}\), with \(\epsilon = \frac{2}{1 + (d / \delta)}\). 
\end{lemma}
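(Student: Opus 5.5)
The plan is a Borel--Cantelli argument combined with a union bound over dyadic shells, using only translation invariance, so that all edges have the same marginal law. Set $p := (d/2)+\delta$. For an edge $e=\{x,y\}$ with $x\sim y$, write $\omega_e := \omega(x,y)$. Since $\pi^\omega(x) = \sum_{y\sim x}\omega(x,y)$ is a sum of $2d$ terms, it suffices to bound $\max_{y\sim x}\omega(x,y)$ by a constant times $\jap{x}^{2-\epsilon}$. Here $\epsilon=0$ is allowed when $\delta=0$, in which case the bound is only required up to a constant, or for all but finitely many $x$, which is enough because finitely many sites do not affect the supremum.

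For a growth exponent $\beta$, Markov's inequality and translation invariance give
\begin{align*}
    \mathbb{P}\big(\omega(x,y) > \jap{x}^{\beta}\big) \leq \jap{x}^{-\beta p}\,\mathbb{E}[\omega_{e_0}^p]
\end{align*}
for every edge $e_0$ in a fixed direction. Summing over $x\in\Z^d$ and the $2d$ neighbours $y$ yields a series comparable to $\sum_x \jap{x}^{-\beta p}$, which converges as soon as $\beta p > d$. Borel--Cantelli then shows that almost surely $\omega(x,y)\leq \jap{x}^{\beta}$ for all but finitely many edges. Consequently $\sup_x \jap{x}^{-\beta}\pi^\omega(x) < \infty$ almost surely, since each of the finitely many exceptional edges has finite conductance.

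For $\delta>0$ we need $\beta = 2-\epsilon$ with $(2-\epsilon)((d/2)+\delta) > d$. Solving gives $\epsilon < 2\delta/((d/2)+\delta) = 4\delta/(d+2\delta)$. The claimed value $\epsilon = 2/(1+d/\delta) = 2\delta/(d+\delta)$ satisfies this strictly, because $2\delta/(d+\delta) < 4\delta/(d+2\delta)$ is equivalent to $2d+4\delta < 4d+4\delta$, which holds. The general scheme therefore gives exactly the stated $\epsilon$, with room to spare.

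The main obstacle is the borderline case $\delta=0$. Then $\beta=2$ gives $\beta p = d$, and the series $\sum_x\jap{x}^{-d}$ diverges logarithmically, so the plain union bound fails. My first attempt would be to replace Markov's inequality by the sharper tail estimate available for an $L^p$ variable: $\mathbb{E}[\omega_e^{p}]<\infty$ is equivalent to
\begin{align*}
    \sum_{k} 2^{kd}\,\mathbb{P}(\omega_{e_0}>2^{2k}) < \infty .
\end{align*}
Grouping the sites into dyadic shells $\{2^{k}\leq\jap{x}<2^{k+1}\}$, which contain about $2^{kd}$ points, the union bound over the shells then sums to a convergent series. This summability is the standard layer-cake identity for $\mathbb{E}[\omega^{d/2}]$. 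Since no independence is needed, Borel--Cantelli again gives $\omega(x,y)\leq 4\jap{x}^2$ for all but finitely many edges almost surely. This yields \eqref{eqn:assumption-quadratic-growth}.
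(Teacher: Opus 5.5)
Your proof is correct and follows essentially the paper's route: Borel--Cantelli over the edges, using only translation invariance so that $\sum_x \mathbb{P}(\omega(0,e_j)\geq \jap{x}^{2-\epsilon})$ is controlled by a moment of $\omega(0,e_j)$ via a layer-cake comparison. The only difference is that the paper applies this comparison uniformly for all $\delta\geq 0$, needing just $\mathbb{E}[\omega(0,e_j)^{d/(2-\epsilon)}]<\infty$. You instead use Markov's inequality for $\delta>0$, which suffices because the stated $\epsilon$ is not sharp, and keep the dyadic layer-cake argument for the borderline case $\delta=0$.
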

\begin{proof}
    Let \(e_1, \dots, e_d\) be the canonical basis of \(\R^d\) and \(e_j = -e_{j-d}\) for \(d+1 \leq j \leq 2d\). Then, for all \(1 \leq j \leq 2d\) and \(\delta \geq 0\),
    \begin{align*}
        &\sum_{x \in \Z^d} \mathbb{P}(\omega(x, x + e_j) \geq \jap{x}^{2-\epsilon})
        = \sum_{x \in \Z^d} \mathbb{P}(\omega(0, e_j) \geq \jap{x}^{2-\epsilon}) \\
        &\lesssim \int_{0}^{\infty} \dd r \, r^{d-1} \, \mathbb{P}(\omega(0, e_j)^{1/(2-\epsilon)} \geq r) 
        \sim \mathbb{E}[\omega(0, e_j)^{d/(2-\epsilon)}].
    \end{align*} Hence, it follows by the Borel--Cantelli lemma that \(\mathbb{P}\)-a.s.\ 
    \begin{align*}
        \sup_x \sum_{y \sim x} \omega(x, y) \jap{x}^{-(2-\epsilon)} 
        \leq \sum_{j = 1}^{2d} \sup_x \omega(x, x + e_j) \jap{x}^{-(2-\epsilon)} 
        < \infty.
    \end{align*}
\end{proof}

To prove the statement about \(\mathbb{P}\)-a.s.\ non-existence of non-fixing \((P_t)_{t\geq 0}\)-invariant probability measures on \(\mathcal{S}'\) with certain moments in Theorem~\ref{theorem:random-conductances-2}, we want to apply Theorem~\ref{theorem:spectral-characterisation}. Thereby it is enough to see that there are \(\mathbb{P}\)-a.s.\ no purely imaginary eigenvalues of \(\Delta = \Delta_\omega\) with eigenvectors in \(\mathcal{S}'\). This is indeed \(\mathbb{P}\)-a.s.\ the case by the Shnol-type theorem Proposition~\ref{proposition:shnol_type_theorem} as soon as the Assumption~\ref{assumption:intrinsic_metric} on the intrinsic metric on \(\Cluster\) is fulfilled.

\begin{lemma}\label{lemma:assumption_intrinsic_metric_is_as_fulfilled_under_moments}
    Suppose \(\mathbb{P}\) is ergodic, translation-invariant and \(\mathbb{E}[\omega(x, y)^p] < \infty\), \(x, y \in \Z^d\), for \(p > \frac{d-1}{2}\). Then, \(\mathbb{P}\)-a.s.\
    the Assumption ~\ref{assumption:intrinsic_metric} is satisfied.
\end{lemma}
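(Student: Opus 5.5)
The plan is to verify condition (2) of Assumption~\ref{assumption:intrinsic_metric} directly; condition (1) then follows, as already remarked after the assumption. Since every $\psi\in\mathcal{S}'$ satisfies $\abs{\psi(x)}\lesssim \jap{x}^{N}$ for some $N$, it suffices to show that $\mathbb{P}$-a.s., for a fixed origin $o\in\Cluster(\omega)$, the intrinsic distance grows faster than logarithmically:
\begin{align*}
    \frac{\rho^\omega(o,x)}{\log\jap{x}} \xrightarrow[\abs{x}\to\infty,\ x\in\Cluster]{} \infty .
\end{align*}
Indeed, this gives $\e^{-\alpha\rho^\omega(o,x)}\jap{x}^{N}\leq \jap{x}^{-d}$ for all large $\abs{x}$ and every $\alpha>0$, so $\e^{-\alpha\rho^\omega(o,\cdot)}\psi\in\ell^2$. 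In fact we will aim for the much stronger statement that $\rho^\omega(o,x)\geq c\abs{x}^{\beta}$ for some $\beta>0$ and all large $x$.

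To get a lower bound on $\rho^\omega$, we look at paths rather than at individual edges. Any path $\gamma$ from $o$ to $x$ must cross each of the annuli $A_k=\{z\colon 2^{k}\leq\abs{z}<2^{k+1}\}$ with $2^{k+1}\leq\abs{x}$. Crossing $A_k$ uses at least $2^{k}$ edges, and each edge $\{z,z'\}$ has length $l^\omega(z,z')\geq (1+\pi^\omega(z)\vee\pi^\omega(z'))^{-1/2}$. Hence the length of the crossing is at least $2^{k}(1+M_k)^{-1/2}$, where $M_k:=\max_{z\in \overline{A_k}}\pi^\omega(z)$ is the maximal local conductance on (a neighbourhood of) the annulus.

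The key estimate is a Borel--Cantelli bound on $M_k$. By translation invariance, a union bound and Markov's inequality,
\begin{align*}
    \mathbb{P}\big(M_k > 2^{2k(1-\eta)}\big) \lesssim 2^{kd}\, 2^{-2kp(1-\eta)}\,\mathbb{E}[\pi^\omega(0)^p].
\end{align*}
This is summable in $k$ only if $p>d/2$, which is stronger than the stated hypothesis $p>(d-1)/2$.

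I expect this step to be the main obstacle: closing the gap between $d/2$ and $(d-1)/2$. The naive bound uses the single worst vertex in the annulus, whereas a crossing only has to avoid being slow everywhere. To get the sharper exponent, I would instead consider the $\gtrsim 2^{k(d-1)}$ disjoint radial segments of length $2^{k}$ and use that a path crossing $A_k$ must traverse a whole radial ``column'' of the annulus.

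A cleaner route is to bound the crossing length by the sum of $(1+\pi^\omega)^{-1/2}$ over the edges actually used, and then apply Hölder or a Jensen-type argument along the path. For a fixed path of $n$ edges, Jensen gives length $\geq n^{3/2}\big(\sum_{e\in\gamma}(1+\pi^\omega)\big)^{-1/2}$. Bounding the sum of conductances along an arbitrary self-avoiding path then requires a chemical-distance type large-deviation estimate over exponentially many paths.

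If this refinement does not close cleanly, I would fall back on the ergodic theorem along coordinate hyperplanes. Ergodicity gives a positive density of ``good'' edges with $\omega\leq K$. One then needs a percolation-type argument showing that any path from $o$ to distance $r$ passes through at least $c\,r^{\beta}$ good edges, each contributing length at least $(1+2dK)^{-1/2}$. This yields a polynomial lower bound on $\rho^\omega$, and hence the required faster-than-logarithmic growth. In either approach, once $\rho^\omega(o,x)\gtrsim\abs{x}^{\beta}$ holds a.s.\ for some $\beta>0$, both conditions of Assumption~\ref{assumption:intrinsic_metric} follow immediately as described above.
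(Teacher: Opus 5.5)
There is a genuine gap. Your reduction is right, and it is also how the paper argues: it suffices that $\rho^\omega(o,\cdot)$ grow faster than $\log\jap{\cdot}$. The paper gets the polynomial bound $\rho^\omega(o,x)\gtrsim\jap{x}^{1-\frac{d-1}{2p}}$ for large $x$ from the proof of the Andres--Deuschel--Slowik comparison with graph distance, plus the fact that graph distance on $\Cluster(\omega)$ dominates that on $\Z^d$.

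That polynomial bound is the entire content of the lemma, and your proposal does not establish it for $p>\frac{d-1}{2}$. The one estimate you complete (maximal conductance on dyadic annuli plus Borel--Cantelli) needs $p>\frac d2$, as you note yourself. The alternatives are only sketched, and two of them rest on false intermediate claims:
\begin{itemize}
\item A path crossing an annulus need not traverse a whole radial column, since it can switch columns.
\item A positive density of edges with $\omega\le K$ does not force every path to distance $r$ to use $\gtrsim r^{\beta}$ of them. The edges with $\omega>K$ may form an infinite connected network (e.g.\ a randomly shifted periodic grid of lines), and then paths to arbitrarily distant points use boundedly many good edges. This route also never uses the moment hypothesis, so it could not produce the threshold $p>\frac{d-1}{2}$.
\end{itemize}

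The missing idea is to keep only one edge per level set and apply H\"older with the exponent matched to $p$. Your Jensen over all edges of the path uses the wrong moment and makes the conductance sum path-dependent, which is why you needed a large-deviation bound over paths.

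Take $o=0$ for simplicity and let $n=\abs{x}_\infty$. Consider the part of any path from $0$ to $x$ up to its first visit to $\{\abs{z}_\infty=n\}$. It stays in $B_n=[-n,n]^d$. For some coordinate $j$ and (without loss of generality) the sign $+$, it contains an edge $\{u_k,u_k+e_j\}$ with $(u_k)_j=k$ for each $k=0,\dots,n-1$. Set $a_k:=1+\pi^\omega(u_k)\vee\pi^\omega(u_k+e_j)$. H\"older with exponents $\frac{2p+1}{2p}$ and $2p+1$ gives
\[
n=\sum_{k=0}^{n-1} a_k^{-\frac{p}{2p+1}}a_k^{\frac{p}{2p+1}}\le\Big(\sum_{k=0}^{n-1} a_k^{-1/2}\Big)^{\frac{2p}{2p+1}}\Big(\sum_{k=0}^{n-1} a_k^{p}\Big)^{\frac{1}{2p+1}} .
\]
The $u_k$, and likewise the $u_k+e_j$, are pairwise distinct points of $B_n$. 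Hence $\sum_k a_k^p\le 2\sum_{z\in B_n}(1+\pi^\omega(z))^p$. By the multiparameter ergodic theorem, using $\mathbb{E}[(1+\pi^\omega(0))^p]<\infty$, this is at most $Cn^d$ for $n\ge n_0(\omega)$, uniformly over all paths. Therefore $\rho^\omega(0,x)\ge c\,n^{\frac{2p+1}{2p}}n^{-\frac{d}{2p}}=c\,n^{1-\frac{d-1}{2p}}$. The exponent is positive exactly when $p>\frac{d-1}{2}$, and this is where the hypothesis enters.
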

\begin{proof}
    The function \(\e^{-\alpha \rho^{\omega}(o, \cdot)}\) has almost surely stretched exponential decay for all \(\alpha > 0\), as
    \begin{align}\label{equation:lower_bound_intrinsic_metric}
        \rho(o,x)
        \gtrsim \langle x \rangle^{1-\frac{d-1}{2p}}
    \end{align} for \(x\) with \(\langle x \rangle \geq M_\omega\) with random constant \(M^\omega > 0\).
    This has essentially the same proof as \cite[Theorem 2.4]{AndresDeuschelSlowik2019}, which then gives
    \begin{align}
        \rho(o,x)
        \gtrsim d(o, x)^{1-\frac{d-1}{2p}}
    \end{align} for the graph distance \(d\) and all \(x\) with \(d(o, x) \geq N^\omega\), with some random constant \(N^\omega > 0\).
    Note that this readily implies \eqref{equation:lower_bound_intrinsic_metric} because the graph distance on the cluster \(\Cluster(\omega)\) is at least as large as the graph distance on \(\Z^d\).
\end{proof}

Putting our observations together, we get a proof of our two results for random conductances. 
\begin{proof}[Proof of Theorem~\ref{theorem:random-conductances} and Theorem~\ref{theorem:random-conductances-2}]
    That Assumption~\ref{assumption:at_most_quadratically_growing} is met \(\mathbb{P}\)-almost surely is proven in Lemma~\ref{lemma:random_conductance_growth_from_moments}.
    It also follows from Lemma~\ref{lemma:random_conductance_growth_from_moments} that the assumptions of Theorem~\ref{theorem:stationary_measures_under_subquadratic_bounds} are satisfied if \(\delta > 0\), so that in this case there will be no non-fixing \((P_t)_{t \geq 0}\)-invariant probability measures on \(\mathcal{S}'\). For the case of \(\delta = 0\), we already gave right before Lemma~\ref{lemma:assumption_intrinsic_metric_is_as_fulfilled_under_moments} the full argument for why there are \(\mathbb{P}\)-a.s.\ no non-fixing \((P_t)_{t\geq 0}\)-invariant probability measures on \(\mathcal{S}'\) with \(\varrho[\Vert \psi\Vert_{H_{-N}}^2] < \infty\) for some \(N \geq 0\).

    The $\mathbb{P}$-almost sure well-posedness of the SDE \eqref{sde} follows from Lemma~\ref{lemma:random_conductance_growth_from_moments} and Theorem~\ref{theorem:well-posed}. 
\end{proof}

\subsection*{Acknowledgements}
JDD and JK thank Marek Biskup, Martin Slowik, and Simone Warzel for helpful discussions on this and related problems. YS thanks Sebastian Andres and Leonid Kolesnikov for conversations about random walks and Gaussian free fields.
JK gratefully acknowledges the financial support of the Leibniz
Association within Benedikt Jahnel's Leibniz Junior Research Group on \textit{Probabilistic Methods for Dynamic Communication Networks} as part of the Leibniz Competition, hosted at the Weierstrass Institute Berlin.

\renewcommand*{\bibfont}{\footnotesize}
\printbibliography 

\end{document}